\documentclass[12pt,a4paper]{amsart}
\usepackage{preamble}

\title[fast differentiation of chaos]{
Fast differentiation of hyperbolic chaos
}

\begin{document}

\begin{abstract}
We derive and prove the `fast response' formula for the linear response, the parameter derivatives of long-time-averaged statistics, of hyperbolic deterministic chaotic systems.
The expression is pointwisely defined so we can compute the linear response in high-dimensions via Monte-Carlo-type algorithms.
It has two parts, where the shadowing contribution is computed by the nonintrusive shadowing algorithm. 
The unstable contribution is expressed by renormalized second-order tangent equations; importantly, it does not contain any distributional derivatives.
The algorithm's cost is solving $u$, the unstable dimension, many first-order and second-order tangent equations along a long orbit; the main error is the sampling error of the orbit. 
We numerically demonstrate the algorithm on a 21-dimensional example, which is difficult for previous methods.

\smallskip
\noindent \textbf{Keywords.}
SRB measure, 
linear response,
fast algorithm, 
nonintrusive shadowing.

\smallskip
\noindent \textbf{AMS subject classification numbers.}
37C40, 
37M25, 
65D25, 
90C31. 
\end{abstract}

\maketitle

\section{Introduction}

\subsection{Main results of the paper}\footnote{This paper was first available on arXiv on September 1, 2020.}
\hfill\vspace{0.1in}
\label{s:main results}

It is an open problem to devise a precise and efficient algorithm for the linear response for high-dimensional hyperbolic chaotic systems.
Here, the linear response is the derivative of the SRB/stationary measure with respect to some parameters in the dynamics.
The main bottleneck is to find a pointwise expression (called an `estimator') for the unstable divergence, $\div^u X^u$, where $X^u$ is the unstable part of the perturbation on the dynamics.
This paper gives a solution to this problem through the following steps.

\begin{enumerate}
  \item 
  Get rid of derivatives of $X^u$ in \cref{l:hen} of \cref{s:transform2pi} by rewriting the unstable divergence as the derivative of a volume ratio $\varpi$ along the stable foliation.
  \item
  Derive an pointwisely defined expansion formula for the derivative of $\varpi$ in \cref{l:hao} of \cref{s:transform3}.
  \item 
  Define a big term, $p$, as the sum of small terms in the expansion.
  Then express the unstable contribution by $p$ in \cref{l:UC by p} of \cref{s:cai}.
  \item 
  Find the dynamics of $p$ in \cref{l:induction} in \cref{s:fastChar}, and show that $p$ is the unique limit of this dynamics along any orbit.
\end{enumerate}

Hence, we obtain the fast response formula in \cref{t:lra}, which is the integration of a pointwise-defined estimator.
The estimator is expressed by the solution of only $u$ many first- and second-order tangent equations, which can be defined recursively on an orbit.
More specifically, for any $r_0$, 
  \[ \begin{split}
    U.C.^W 
    = \lim_{N \rightarrow \infty} \frac 1 {N} \sum_{n=0}^{N-1}
      \ip{\tilde \beta r_n ,
      \tilde e_{n+1} },
    \quad\textnormal{where}\quad
    r_{n+1}= P^\perp \tilde \beta r_n .
  \end{split} \]
Here $\ip{\cdot, \cdot}$ is the Riemannian metric defined in equation~\eqref{e:tensorproduct}.
Roughly speaking, $\tilde e$ is the unit $u$-dimensional cube spanned by unstable vectors,
$r_n\in\cD^u$ are derivatives of cubes (see appendix~\ref{a:derivative}) that converges to $p$;
$\tilde \beta$ is the renormalized second-order tangent equation 
governing the propagation of derivatives of vectors (definition~\ref{d:betaU} in section~\ref{s:betaU});
$P^\perp$ is the orthogonal projection operator on $\cD^u$ (appendix~\ref{a:projection}).
A side benefit is that we get rid of oblique projections (the projections onto the stable or unstable subspaces); this will bring some more efficiency and robustness to the algorithm.

The second main result is the fast response algorithm, which is precise, efficient in high-dimensions, and easy to implement.
The algorithm works well in high-dimensions since it is Monte-Carlo: it first computes an orbit, then inductively solves several quantities on the orbit, then computes an estimator from these quantities, and takes average of the estimator along the orbit.
The inductive nature makes the algorithm quite efficient; its cost is only $O(Mu)$ per step, where $M$ is the system dimension and $u$ is the unstable dimension.
We further write everything in matrix notation in section~\ref{s:procedure list}, which is more suitable for coding.
The algorithm requires little additional coding to first-order and second-order tangent solvers, where second-order tangent solvers are just first-order solvers with a second-order source term.

Section~\ref{s:example} shows a numerical application on a modified solenoid map,
which is a 21-dimensional system with a 20-dimensional unstable subspace, whose direction is unknown beforehand.
For the same precision, the fast response is about $10^{6}$ times faster than the path-perturbation method.
Also, the 21 dimension would require a tremendous number of elements for conventional operator-based, or divergence, algorithms.
The fast response algorithm is even faster than the regression methods.

The last result of this paper is several basic geometry and algebra tools
for the second-order tangent equation,
which governs the propagation of derivatives of unstable $u$-vectors.
These tools are useful when considering $u$-dimensional invariant submanifolds.
In particular, appendix~\ref{a:f*} defines the derivative of the pushforward operator $\nabla f_*$.
Appendix~\ref{a:derivative} defines the linear space, $\cD^u$, of derivatives of unstable $u$-vectors.
Appendix~\ref{a:projection} extends the projection operators on single-vectors to $\cD^u$.

This paper is organized as follows.
First, we review the related literature.
Section~\ref{s:prep} recalls in detail the tools that we shall use, such as hyperbolic systems,
the linear response formula, and the nonintrusive shadowing algorithm.
Section~\ref{s:3step} gets rid of the distributional derivatives in the unstable divergence, and derives a pointwisely defined expansion formula.
Section~\ref{s:compute unstable} derives a convergent inductive expression from the expansion.
Section~\ref{s:algorithm} gives more details of the fast response algorithm and lists the pseudocode.
\Cref{s:discuss} gives some discussions on coding, cost estimation, and a potential program to extend to non-hyperbolic systems.
Section~\ref{s:example} shows a numerical application on a 21-dimensional example.

\subsection{Literature review}
\hfill\vspace{0.1in}
\label{s:review1}

Chaos appears in many disciplines, such as fluid mechanics, geophysics, and machine learning. 
For a deterministic map on a manifold, the physical measure, or SRB measure, gives the long-time-averaged statistics of chaotic systems.
The linear response is the derivative of an observable function, integrated according to a marginal or stationary/SRB measure, with respect to some parameters in the dynamics, or the governing equation.
It is fundamental to many numerical tools widely used in those disciplines, such as gradient-based optimization, error analysis, and uncertainty quantification.
However, computing the linear response is challenging.
There are mainly two methods for the linear response of deterministic systems: the path perturbation method and the divergence method.

The path-perturbation method is formally the average of the orbit-wise perturbations over many orbits (see \cref{s:linResponse}).
Theoretically, this gives an accurate linear response for hyperbolic systems and is the final formula presented in the proof \cite{Ruelle_diff_maps,Ruelle_diff_maps_erratum,Ruelle_diff_flow,Jiang2012,Dolgopyat2004}.
Numerically, the path-perturbation algorithm has been developed in \cite{Lea2000,eyink2004ruelle,lucarini_linear_response_climate,lucarini_linear_response_climate2}.
However, because the orbit-wise perturbations grow exponentially fast, it is typically unaffordable for convergence to actually happen.
The path-perturbation method also includes the backpropagation method, which is the basic algorithm for machine learning.

The divergence method is also known as the (measure) transfer operator method, since the perturbation of the transfer operator is some kind of divergence (see \cite{TrsfOprt} for a derivation).
Theoretically, this is also correct for hyperbolic systems, and it is the final formula presented in the proofs in \cite{Gouezel2006,Bahsoun2018,Crimmins2020,optimalresponse2022,SantosGutierrez2020}.
Numerically, it has been developed in \cite{Galatolo2014,Wormell2019a,Zhang2020}.
However, it is expensive for high-dimensional systems with contracting directions for the reasons below.

To work in high-dimension, it seems that we must use a Monte-Carlo type linear response formula/algorithm.
This means that the formula is the average of an expression (called an `estimator'), which can be evaluated on each point of an orbit, and then we can take average of the estimator over a long orbit or several orbits.
When the system has contractions, the divergence formula becomes distributions, so it can not be evaluated pointwisely.
Unlike the unstable divergence which we shall discuss later, the full divergence here is a distribution by itself, so there is no hope to compute it pointwisely.
The work-around is to avoid the Monte-Carlo approach, and use a finite set of basis functions to approximate singular SRB measures, but the cost increases exponentially fast with the dimension (see \cite{TrsfOprt} for a cost estimation).

Mixing two basic methods can overcome some of their major shortcomings.
This idea has been seen in various theoretical works mentioned above, although they did not end up giving new formulas.
In numerics, the blended response algorithm by Majda and Abramov first attempted to combine the path-perturbation and the divergence method.
Its path-perturbation part has cost $O(M^2)$ per time step, where $M$ is the dimension of the system; this cost is equal to or lower than some related and later algorithms such as least-squares shadowing \cite{Chater_convergence_LSS,Blonigan_MSS,Shawki2019,Lasagna2019}, but it is not optimal.
Its divergence part was approximated by a degenerate finite-element type algorithm, where the measure is approximated by a single Gaussian function, whose accuracy can not be improved without exponentially increasing the cost \cite{Abramov2008}.

This paper gives a complete and efficient solution to mixing the path-perturbation and the divergence method, which we call the path-divergence method, or the fast response method.
More specifically, we decompose the linear response into the shadowing contribution and the unstable contribution, which are then expressed by new path-perturbation and divergence formulas, respectively.

The notion of shadowing can be regarded as a particular path-perturbation which remains bounded in hyperbolic systems \cite{Ano_shadow,Bowen_shadowing,Pilyugin_shadow_linear_formula}.
We use our `nonintrusive-shadowing' algorithm for this part of the linear response, which constrains the computation to only the unstable subspace.
Hence, its cost is reduced to only $O(Mu)$ per step, where $u$ is the unstable dimension \cite{Ni_NILSS_JCP,Ni_fdNILSS}.
When $u\ll M$, the unstable contribution can be small \cite{Ruesha}, hence, we can sometimes approximate the linear response by only the shadowing contribution.
Nonintrusive shadowing could compute a reasonable linear response in practical problems such as computational fluid systems with $4\times10^6$ degrees of freedom \cite{Ni_CLV_cylinder}.
Moreover, this paper will show that nonintrusive shadowing is also important in computing the unstable contribution.

The unstable contribution can be expressed by the derivative of the transfer operator on unstable manifolds, which is further expressed by a divergence in the unstable manifold, $\div^uX^u$ (see \cref{s:transform1}, and \cite{TrsfOprt}).
But this expression can be deceptive since, although $\div^uX^u$ as a whole is a function, the directional derivatives of $X^u$ are true distributions (not functions) so they are not pointwisely defined.
In numerics, this means that we get infinite values when trying to compute the directional derivatives, so we can not run Monte-Carlo-type algorithms, and we face the same difficulty as the conventional divergence method.
One goal of this paper is to find another expression of the unstable divergence which does not secretly involve distributions.
In other words, we find a pointwisely defined expression of the derivative of transfer operators on unstable manifolds, so that we can run Monte-Carlo algorithms to compute it in high-dimensions.

\subsection{Review of literature after this paper}
\hfill\vspace{0.1in}
\label{s:review update}

During the handling process of this paper, we gave several extensions of the fast response method and several new linear response methods.
First, we gave an adjoint version of the method, whose cost is almost independent of the number of parameters; its theory is also a bit easier to understand than the current paper \cite{Ni_asl,TrsfOprt,Ni_nilsas,far}.
We also gave the fast adjoint response method for continuous-time hyperbolic systems \cite{vdivF}.
We also used the fast adjoint response method to solve the optimal response problem in the hyperbolic setting \cite{GN25}.

In particular, our adjoint version of the fast response formula in \cite{Ni_asl,TrsfOprt} has made much improvement over the current paper in terms of understandability.
This is because the current paper only has an expansion formula, but not a fast formula for unstable divergence $\div^u X^u$.
In this paper, we first expand $\div^u X^u$ into many small terms, then use the invariance of SRB measure to change the time steps of each small term, yielding an `asynchronous' version of $\div^u X^u$, which is highly-related to but is not exactly $\div^u X^u$.
On the other hand, our fast adjoint response formula was able to give an explicit fast formula of $\div^u X^u$: with this intermediate step as anchor, the fast adjoint response formula should be easier to understand, and its relation to the transfer operator on unstable manifolds is also more clear than the current paper.

The fast response method does not work when hyperbolicity is poor.
In fact, we can show that many simple systems do not have linear responses \cite{Baladi2007,wormell22}.
A possible solution is to modify the dynamics a bit so that we can compute the linear response of the modified system, which still offers some guidance for the optimization of the original system.
The plausible modification is to add noise.

Random dynamical systems has another basic linear response formula, the kernel-differentiation formula.
Its main feature is that the derivative hits the probability kernel at each step; since the dynamics is not differentiated, it is not affected by the lack of hyperbolicity.
It is also known as the Cameron-Martin-Girsanov theorem \cite{CM44, MalliavinBook} or the likelihood ratio method \cite{Rubinstein1989,Reiman1989,Glynn1990}.
Proofs of the conventional kernel-differentiation formula for the case of stationary measures are given, for example, in \cite{HaMa10}.
We gave two new kernel-differentiation formulas, an ergodic version and a foliated version, in \cite{Ni_kd}.
However, the kernel-differentiation method in general cannot handle multiplicative noise or perturbation of the diffusion coefficients; it is also expensive when the noise is small.

Mixing the kernel-differentiation method with the path-perturbation or divergence method can also overcome some major shortcomings of basic methods.
This was achieved by the author's path-kernel method in \cite{dud,apk} and the divergence-kernel method in \cite{divKer,DKlinR}.
Both work for non-hyperbolic systems and multiplicative noise controlled by some parameters.
In particular, the adjoint path-kernel method can solve a difficult version of the variational data assimilation problem, where we optimize an unstable diffusion process to match a low-dimensional observation path \cite{apk}.
On the other hand, the divergence-kernel method computes the derivative of the marginal or stationary density; hence, we can directly optimize the diffusion process so that the marginal distribution matches the data -- this gives a new framework of generative models \cite{DKlinR}.

In summary, we have obtained all combinations of two (out of three) basic linear response methods.
They have significantly improved our capacity of dealing with chaos in all related fields such as fluids and machine learning, while still have some limitations.
Hence, it is likely that we should seek to mix all three basic methods.
This 3-in-1 program was proposed in \cite{Ni_kd} and is also briefly mentioned in \cref{s:beyond hyper}.

\section{Preparations}
\label{s:prep}

\subsection{Hyperbolicity and notations}
\hfill\vspace{0.1in}
\label{s:hyperNotation}

Let $f$ be a $C^3$ map on a $C^\infty$ Riemannian manifold $\cM$, whose dimension is $M$.
Assume that $K$ is a compact hyperbolic invariant set, that is, $T_K\cM$ (the tangent bundle restricted to K) 
has a continuous $f_*$-invariant splitting $T_K\cM = V^s \bigoplus V^u$,
such that there are constants $C>0$, $0<\lambda < 1$, and
\[
  \max_{x\in K}\|f_* ^{-n}|V^u(x)\| ,
  \|f_* ^{n}|V^s(x)\| \le C\lambda ^{n} \quad \textnormal{for all}\quad n\ge 0.
\]
Here $f_*$ is the pushforward operator, which is the Jacobian matrix of $f$ when $\cM=\R^M$ (see definition~\ref{d:f*} in appendix~\ref{a:f*}).
We call $V^u$ and $V^s$ the stable and unstable  subspaces.
Let $u, s$ denote the dimension of the unstable and stable manifolds, so $u+s=M$.
A stable manifold, $\mathcal V^s(x)$, is as smooth as $f$, tangent to $V^s(x)$ at $x$,
and there are $C>0$ and $\lambda< 1$ such that if $y,z\in\cV^s(x)$,
\[
  d(f^n y, f^n z) \le C \lambda^{n} d (y,z) \quad \textnormal{for}\quad n\ge 0,
\]
where $d$ is the distance function.
Unstable manifolds are defined similarly.

We assume that $K$ is an Axiom A attractor, which is the closure of periodic orbit,
and there is an open neighborhood $U$, called the basin of the attractor, such that $\cap_{n\ge0} f^nU=K$.
For all $x\in K$, the local unstable manifolds $\cV^u(x)$ are in $K$,
whereas the local stable manifolds $\cV^s(x)$ fill a neighborhood of $K$.
For more details on hyperbolicity, see \cite{Ruelle1989,Shub1987}.
Due to the spectral decomposition theorem, taking a basic set and raise $f$ to some power,
we may further assume that $f$ is mixing on $K$ \cite{Bowen,Smale1967}.

Under our assumptions, the SRB measure $\rho$ of $f$ on $K$ is the unique $f$-invariant measure
with either of the following characterizations \cite{young2002srb}:
\begin{itemize}
  \item $\rho$ has absolute-continuous conditional measures on unstable manifolds;
  \item $\rho$ is the physical measure, that is, there is a set $V\subset \cM$ having a full Lebesgue measure such that for
  every continuous observable $\phi:\cM \rightarrow \R$, almost all $x\in V$
  \[ \begin{split}
    \frac 1n \sum _{i=0} ^{n-1} \phi (f^ix) \rightarrow \rho(\phi).
  \end{split} \]
\end{itemize}
Note that the attractor $K$ is typically lower-dimensional with a zero Lebesgue measure: this was proved for many important systems such as the Navier-Stokes and SQG equation \cite{Constantin1985a,Constantin2016}.
The fractal nature of the attractor causes major difficulties.

We explain some notation conventions used in this paper.
Let $x\in \cM$ be the point of interest and $x_k:=f^kx$.
The subscripts $n, k, m$ only label the steps and the subscripts for step zero are omitted.
For a tensor field $X$ on $\cM$, let $X_k$ be the pullback of $X$ by $f^k$,
\[ \begin{split}
  X_k(x):=X(x_k).
\end{split} \]
The subscript $k$ always specifies that the value of the tensor field is taken at $x_k$.
However, when $X_k$ is differentiated, we should further specify its domain or when the pullback occurs: we leave that to be determined by the differentiating vector.
For example, let $\nabla$ be the Riemannian connection, $Y$ a vector field, then
\begin{equation} \label{e:nablaXY}
  (\nabla_{Y_k}X_k)(x) := (\nabla_{Y}X)(x_k). 
\end{equation}
Since $Y_k(x)\in T_{x_k}\cM$, it must differentiate a tensor field at $x_k$,
so $X_k$ must be a function around $x_k$.
Hence, $X_k$ is $X$ when differentiated,
and then the entire result is pulled-back to $x$. 
The other way does not work: if $X_k$ is $X\circ f^k$ when differentiated, 
this is a function of $x$ and can not be differentiated by $Y_k$.
For another example, take a differentiable observable function $\phi$ on $\cM$,
\begin{equation} \begin{split} \label{e:notation}
  f^k_* Y(\phi_k)(x):= 
  f^k_* Y (\phi)(x_k) = Y (\phi\circ f^k)(x) =: Y(\phi_k)(x),
\end{split} \end{equation}
where $Y(\cdot)$ means to differentiate in the direction of $Y$.
Since $f^k_*Y(x)\in T_{x_k}\cM$,
the first $\phi_k$ is $\phi$ when the differentiation occurs at $x_k$,
and then the result is pulled-back to $x$;
since $Y(x)\in T_x\cM$,
the second $\phi_k$ is $\phi\circ f^k$ when the differentiation occurs right at $x$.
Finally, we omit the step subscript of the pushforward tensor $f_*$,
since its location is well-specified by the vector it applies to.

\subsection{Some previous linear response formulas} 
\label{s:linResponse}
\hfill\vspace{0.1in}

A linear response formula is an expression of $\delta \rho$, the derivative of the SRB measure, using $\delta f$.
The path-perturbation formula for the linear response is formally the average of perturbations of each orbit initially distributed as the SRB measure \cite{TrsfOprt}.
Ruelle proved that this formula indeed gives the derivative \cite{Ruelle_diff_maps,Jiang06}.
The formula can be proved for more general cases;
for example, Dolgopyat proved it for partially hyperbolic systems \cite{Dolgopyat2004}.

\begin{theorem}[path-perturbation formula for the linear response] \label{t:ruelle}
  Denote the SRB measure of $f$ on $K$ by $\rho$,
  assume that $f$ is parameterized by some scalar $\gamma$,
  and define 
  \[ \begin{split}
  \delta(\cdot):=\frac{\partial (\cdot)}  {\partial \gamma}.
  \end{split} \]
  Then the derivative of the SRB measure, for a fixed objective function $\Phi$, is given by:
  \[
    \delta \rho (\Phi)
    = \sum_{n=0}^\infty \rho\left( X(\Phi_n) \right) 
    = \lim_{W\rightarrow\infty} \rho\left( \sum_{n=0}^W X(\Phi_n) \right) ,
  \]
  where $X:=\delta f\circ f^{-1}$, $X(\cdot)$ is to differentiate in the direction of $X$,
  and $\Phi_n = \Phi\circ f^n$.
\end{theorem}

\begin{remark*}
(1)
The $f^{-1}$ in the definition of $X$ is to make $X(x)$ a vector at $x$:
this is more convenient than $\delta f$, which maps $x$ to a vector at $f(x)$.
(2)
The formula in the theorem is more important to us than the technical assumptions, which we choose to be the same as \cite{Ruelle_diff_maps}; our formula should be correct for more general assumptions, as discussed in \cref{s:beyond hyper}.

More technically, we assume that $K$ is a mixing Axiom A attractor for the $C^3$ diffeomorphism $f$ of $\cM$.
Let $\cA$ be the space of $C^3$ diffeomorphisms sufficiently close to $f$ in a fixed neighborhood $V$ of $K$; 
$\cA$ has $C^3$ topology.
Assume that $\gamma\mapsto f$ is $C^1$ from $\R$ to $\cA$, then $X$ is a $C^3$ vector field.
Fix an observable $\Phi\in C^2$, then $\gamma\mapsto \rho(\Phi)$ is $C^1$ from $\R$ to $\R$.
\end{remark*}

Due to unstable components, the path-perturbation estimator, where the estimator refers to the expression inside the integration, grows exponentially to $W$:
\[ \begin{split}
    \sum_{n=0}^W X(\Phi_n) \sim O(\lambda_{max}^{W}).
\end{split} \]
Here, $\lambda_{max} >1$ is the largest Lyapunov exponent.
This phenomenon is also known as the `gradients explosion'.
Statistically, the number of samples requested to evaluate the integration increases exponentially to $W$, incurring a high computational cost.
Hence, algorithms based on the path-perturbation formula suffer from very high computational cost in unstable systems.

To obtain a small estimator, we decompose the linear response into two parts.
The first part, the shadowing contribution, accounts for the change in the location of the attractor through a conjugacy map.
The second part, the unstable contribution, 
accounts for the fact that the pushforward of the old SRB onto the new attractor is no longer SRB.
Integrating by part the unstable contribution on the unstable manifold gives (see section~\ref{s:transform1} for explanations)
\begin{equation} \begin{split} \label{e:ruelle22}
\delta \rho (\Phi)  &= S.C. - U.C. 
  = \rho(v(\Phi)) -  
  \rho \left(\left( \sum_{n\in \mathbb{Z}}\Phi _n \right) \diverg_\sigma^u  X^u \right),
  \\ \quad\textnormal{where} \quad
  &S.C.
  := \sum_{n\ge0} \rho (X^s(\Phi_n))
  - \sum_{n\le -1} \rho (X^u(\Phi_n)) 
  = \rho(v(\Phi)),\\
  &U.C.
  := -\sum_{n\in \Z} \rho\left( X^u(\Phi_n) \right) 
  = \lim_{W\rightarrow \infty} \rho \left(\sum_{n=-W}^W \Phi _n  \diverg_\sigma^u  X^u\right) .\\
\end{split} \end{equation}
Here $S.C.$, $U.C.$ are shadowing and unstable contributions,
$v$ is the shadowing vector (section~\ref{s:shadowing contribution}),
$X^u$ and $X^s$ are unstable and stable oblique projections of $X$ (appendix~\ref{a:projection} figure~\ref{f:projection}).
The unstable divergence, $\diverg_\sigma^u$, is the divergence in the unstable manifold under the conditional SRB measure.

We call equation~\eqref{e:ruelle22} the blended linear response formula.
For the shadowing contribution, the integrand $v$ is bounded.
For the unstable contribution, note that 
subtracting $\Phi$ by any constant, such as $\rho(\Phi)$, does not change the linear response, so
\[ \begin{split}
  U.C.  = \lim_{W\rightarrow \infty} \rho \left(\psi  \diverg_\sigma^u  X^u\right),
  \quad\textnormal{where}\quad
  \psi:=\sum_{n=-W}^W (\Phi_n -\rho(\Phi)).
\end{split} \]
Note that $\psi$'s size is $O(\sqrt W)$ now.
Hence, the estimator in the blended formula is
much smaller than the path-perturbation formula,
and algorithms based on the blended formula should have much faster convergence.
However, efficient computation of the unstable divergence has been an open problem: this is solved in our paper.

We do not use the more familiar decomposition of the linear response into stable and unstable contributions.
This is because computing the stable contribution requires computing oblique projections,
which is twice the cost of computing the shadowing contribution by the nonintrusive shadowing algorithm; it is also not very robust.
Moreover, for both decompositions, our algorithm for the unstable contribution is the same,
which requires computing a modified shadowing vector anyway.

\subsection{Nonintrusive shadowing algorithm}
\label{s:shadowing contribution}
\hfill\vspace{0.1in}

On an orbit $ \{ x_n \}_{ n\in\Z }$, we define the shadowing vector, $ \{ v_n:=v(x_n)\in T_{x_n}\cM \}_{ n\in\Z }$, 
as the only bounded solution of the inhomogeneous tangent equation,
\[\begin{split}
  v'_{n+1} = f_*v'_n + X_{n+1}.
\end{split}\]
This equation governs the propagation of perturbations on an orbit,
hence, we know that $v$ is the first order difference between two shadowing orbits.
Notice that $v$ is a vector field and $ \{ v_n \}_{ n\in\Z }$ is defined on an orbit.
By the law of linear superposition and the exponential growth of homogeneous tangent solutions, 
we can write the formula of $v$,
\[ 
  v = \sum_{k\ge 0}f_*^k X_{-k}^s - \sum_{k \ge 1} f_*^{-k} X_{k}^u.
\]
Compared with the expression of the shadowing contribution, we can show
\[ 
  S.C.  = \rho (v( \Phi)), 
\]
Here, $v(\cdot)$ denotes the derivative in the direction of $v$.

The nonintrusive shadowing algorithm computes the shadowing vector by constrained minimization on an orbit \cite{Ni_NILSS_JCP},
\[ \begin{split} 
  &\min_{a\in \R^u} \frac 1{2N} \sum _{n=0} ^{N-1} \|v_n\|^2,
  \quad  \mbox{s.t. }
  v_n =  v_n' + \underline e_n a \,,
\end{split}\]
where $v'$ is an arbitrarily chosen inhomogeneous tangent solution, such as that solved from the zero initial condition.
$\underline e$ is an $M\times u$ matrix, whose columns are homogeneous tangent solutions.
In other words, the boundedness property is approximated by a minimization,
and the $M$-dimensional feasible space of all inhomogeneous tangent solutions is reduced to a $u$-dimensional affine subspace.
This reduction significantly reduces computational cost, yet is still capable of finding the shadowing vector \cite{Ruesha}.
The nonintrusive shadowing algorithm is an ingredient of the fast response algorithm,
and its pseudocode is included in section~\ref{s:procedure list}.

If the unstable contribution is small,
we may choose to ignore it or approximate it crudely.
For example, when $u\ll M$, the unstable contribution is small if the system has a fast decay of correlations, and $X$ and $\Phi$ are not particularly aligned with the unstable direction.
Then, we may well approximate the entire linear response by nonintrusive shadowing.
Furthermore, it is possible to partially compute the unstable contribution using the path-perturbation formula, but the asymptotic cost for large $W$ is much higher than fast response \cite{Ruesha}.
Since nonintrusive shadowing is part of the fast response algorithm, it does not hurt to first try nonintrusive shadowing, which is faster and maybe accurate enough.
If more accuracy is demanded, we may further compute the unstable contribution.
Somewhat surprisingly, an efficient computation of the unstable contribution also requires nonintrusive shadowing.

\section{Removing distributional derivatives and Expanding the unstable divergence} \label{s:3step}

The unstable divergence is a Holder continuous function, but the directional derivatives of $X^u$ are typically true distributions with infinite values.
Hence, we should search for new formulas for the unstable divergence other than summing the directional derivatives.
This is achieved via three steps of transformation, each corresponds to a subsection, and the latter two are perhaps more difficult.
\begin{itemize}
\item Transform the unstable divergence under the conditional SRB measure to the divergence under the Lebesgue measure.
The difference is the derivative of the conditional SRB measure, 
which can be expanded since SRB is the infinite pushforward of Lebesgue.
\item The Lebesgue unstable divergence equals the sum of the derivatives of the volume ratio of two projections, one along stable manifolds, the other along $X$.
\item Expand the projection along stable manifolds using the fact that it collapses after infinite pushforward.
\end{itemize}

\subsection{Integration by parts and measure change}
\label{s:transform1}
\hfill\vspace{0.1in}

In order to obtain a smaller integrand for the unstable contribution, we integrate-by-parts on the unstable manifold under the conditional SRB measure, which yields unstable divergence under the SRB measure.
We show that the SRB unstable divergence differs from the Lebesgue unstable divergence by a measure change.

Throughout this paper, we make the same assumptions as theorem~\ref{t:ruelle} for the simplicity of the discussions.
Recall that the conditional SRB measure on an unstable manifold $\cV^u$ has a density $\sigma$ with respect to the $u$-dimensional Lebesgue measure.
Let $\omega$ be the volume form on $\cV^u$, $\rho$ be the SRB measure.
Integrations to $\rho$ can be done by first integrating to $\sigma \omega$ on the unstable manifold, then in the transversal direction.

We first explain the integration by parts.
For any $\phi\in C^1$, 
\[ \begin{split} 
  X^u(\phi) \sigma  +  \phi X^u(\sigma)
  = X^u (\phi \sigma)
  = \diverg^u (\phi \sigma X^u) 
  -  \phi \sigma \diverg^u X^u .
\end{split} \]
Here $X^u(\cdot)$ means to differentiate a function in the direction of $X^u$, $\diverg^u$ is the divergence on the unstable manifold under the Riemannian metric.
For now, we only know that $X^u$ is Holder continuous and its derivatives are distributions; 
later, we will prove the regularity of the unstable divergence.
On a piece of unstable manifold $\cV^u$, 
we have the integration-by-parts formula
\begin{equation} \begin{split} \label{e:oxygen}
  \int_{\cV^u} X^u(\phi) \sigma \omega 
  = \int_{\cV^u} \diverg^u (\phi \sigma X^u) \omega
  - \int_{\cV^u} \left( \frac \phi \sigma X^u(\sigma)\right) \sigma \omega
  - \int_{\cV^u} \left( \phi \diverg^u X^u \right) \sigma \omega.
\end{split} \end{equation}
We will deal with the first two terms on the right hand side in this subsection,
and leave the last term, which involves the unstable divergence, to the next two subsections.

When integrating over the entire attractor with the SRB measure $\rho$, the first term on the right of equation~\eqref{e:oxygen} becomes zero.
To see this, first notice that the divergence theorem reduces it to boundary integrals.
Intuitively, since unstable manifolds always lie within the attractor and do not have boundaries,
the boundary integral would never appear when integrating over $\rho$, and hence this term becomes zero.
More rigorously, we can choose a Markov partition with a small diameter and then let the boundary integrals of two adjacent rectangles cancel each other.
To conclude, we get
\[ \begin{split} 
\rho( X^u(\phi) ) 
  = - \rho \left(\phi \diverg_\sigma ^uX^u\right),
  \textnormal{ where }
  \diverg_\sigma ^uX^u := \frac 1\sigma X^u(\sigma) + \diverg^u X^u .
\end{split} \]

In the rest of this paper, let 
\begin{equation} \begin{split} \label{e:e}
  e:=e_1\wedge\cdots\wedge e_u\in \wedge^u V^u 
\end{split} \end{equation}
be a $u$-vector field differentiable on each unstable manifold, but not necessarily continuous in all directions.
In addition, assume that $e$ and $\nabla_{(\cdot)} e$ are bounded on the attractor $K$ under the Riemannian metric, where $\nabla_{(\cdot)} e$ is the Riemannian connection operating on vectors in $V^u$.
In fact, we may regard $e$ as $C^1$ $u$-vector fields on individual unstable manifolds.
We can make it continuous across the foliation, for example,
\[ \begin{split}
  \tilde e := \frac {e}{\|e\|}
\end{split} \]
is unique and continuous modulo orientation, and it satisfies our boundedness assumption,
because the unstable manifold theorem states that, in our case,
the unstable manifolds are continuous in $C^3$ topology \cite{Pugh1977}.
Here $\|\cdot\|$ is the $u$-dimensional volume of the hyper-cube spanned by $\{e_i\}_{i=1}^u$.
We use $e$ instead of $\tilde e$ if the statement holds more generally.

The volume of hyper-cubes, $\|\cdot\|$, is in fact a tensor norm induced by the Riemannian metric.
That is, $\|e\|^2 = \ip{e,e}$.
In this paper, for simple $u$-vectors,
\begin{equation} \begin{split} \label{e:tensorproduct}
  \ip{e,r} := \det \ip{e_i, r_j},
  \quad \textnormal{where} \quad
  e = e_1\wcw e_u, \;
  r = r_1\wcw r_u, \;
  e_i, r_j\in T\cM.
\end{split} \end{equation}
When the operands are summations of simple $u$-vectors, the inner-product is the corresponding sum \cite{chenweihuan,LeeRieman}.

The SRB measure is the weak limit of pushing-forward the Lebesgue measure.
Roughly speaking, pushing-forward is like a matrix multiplication.
Hence, by the Leibniz rule,
the measure change term, $X^u(\sigma)/\sigma$, can be expanded to an infinite summation given below.

\begin{lemma}[expression for measure change] \label{l:arau}
The measure change has the following expression that converges uniformly on $K$
\[ \begin{split} 
  \frac 1\sigma X^u(\sigma)
  = \sum_{k=1}^\infty
  - \frac{\ip{\nabla_{f_*^{-k+1}X^u} f_*e_{-k}, f_*e_{-k}} }
  {\ip{f_*e_{-k}, f_*e_{-k}}}
  + \frac{\ip{\nabla_{f_*^{-k}X^u} e_{-k}, e_{-k}}} 
  {\ip{e_{-k}, e_{-k}}} ,
\end{split} \]
where $e_{-k}$ can be any differentiable $u$-vector field on $f^{-k}(\cV^u)$.
\end{lemma}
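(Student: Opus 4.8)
The plan is to begin from the infinite-product formula for the conditional SRB density and differentiate it logarithmically, so that the Leibniz rule turns the product into the stated sum. Because the SRB measure is the weak limit of pushing the Lebesgue measure forward along the unstable foliation, pushing the Riemannian volume on $f^{-N}\cV^u$ forward by $f^N$ yields at the point $x$ the density $\big(\prod_{k=1}^N J^u(f^{-k}x)\big)^{-1}$ relative to Riemannian volume on $\cV^u$; letting $N\to\infty$ and normalizing gives, for any two points $x,y$ on the same unstable manifold,
\[
  \log\frac{\sigma(x)}{\sigma(y)}
  = \sum_{k=1}^\infty \Big( \log J^u(f^{-k}y) - \log J^u(f^{-k}x)\Big),
\]
which also follows from the SRB characterization recalled above \cite{young2002srb}. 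This series converges since $d(f^{-k}x,f^{-k}y)\le C\lambda^k d(x,y)$ and $\log J^u$ is Hölder along unstable leaves. I would then fix $y$ and differentiate in the direction $X^u$, which is tangent to $\cV^u$; the reference terms $\log J^u(f^{-k}y)$ are constant in $x$ and drop out, leaving
\[
  \frac1\sigma X^u(\sigma) = X^u(\log\sigma)
  = -\sum_{k=1}^\infty X^u\big(\log J^u\circ f^{-k}\big),
\]
where the term-by-term differentiation is to be justified by the uniform convergence proved at the end.

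Next I would evaluate each summand coordinate-freely. By the chain rule in the form of convention~\eqref{e:notation}, $X^u(\log J^u\circ f^{-k})(x)$ equals $(f_*^{-k}X^u)(\log J^u)$ evaluated at $f^{-k}x$. Expressing the unstable Jacobian through a differentiable $u$-vector field $e_{-k}$ on $f^{-k}\cV^u$ as $\log J^u=\log\|f_* e_{-k}\|-\log\|e_{-k}\|$ and differentiating each norm by metric compatibility of $\nabla$ gives $(f_*^{-k}X^u)(\log\|e_{-k}\|)=\ip{\nabla_{f_*^{-k}X^u}e_{-k},e_{-k}}/\ip{e_{-k},e_{-k}}$. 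For the first norm, $\log\|f_* e_{-k}\|$ is a function on $f^{-k+1}\cV^u$ precomposed with $f$, so a further chain rule transports the differentiating direction from $f_*^{-k}X^u$ to $f_* f_*^{-k}X^u=f_*^{-k+1}X^u$, producing $\ip{\nabla_{f_*^{-k+1}X^u}f_* e_{-k},f_* e_{-k}}/\ip{f_* e_{-k},f_* e_{-k}}$. Collecting both contributions under the overall minus sign reproduces the stated summand. I would then observe that rescaling $e_{-k}\mapsto\phi e_{-k}$ shifts both terms by the identical amount $(f_*^{-k}X^u)(\log\phi)$, so their difference, and hence the whole sum, is independent of the choice of $e_{-k}$; this is exactly why $e_{-k}$ may be taken arbitrarily.

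The last step, which I expect to be the main obstacle, is the uniform convergence on $K$, since it simultaneously legitimizes differentiating the product term by term. As $X^u\in V^u$ is bounded and $\|f_*^{-k}|V^u\|\le C\lambda^k$, the directions $f_*^{-k}X^u$ decay exponentially, while the unstable-directional derivative of $\log J^u$ is bounded on the compact set $K$, because $\log J^u$ is $C^2$ along each unstable leaf (the leaves being as smooth as $f$). Hence each summand is $O(\lambda^k)$ uniformly in $x$, giving absolute and uniform convergence, after which the standard theorem on differentiating a uniformly convergent series of derivatives upgrades the formal computation to the claimed identity. The delicate point is to phrase the gradient bound intrinsically, avoiding the merely Hölder transversal dependence of the foliation: I would establish the estimate for the normalized field $\tilde e=e/\|e\|$, whose connection $\nabla_{(\cdot)}\tilde e$ was already assumed bounded, and then transfer it to an arbitrary $e_{-k}$ using the scaling invariance just noted.
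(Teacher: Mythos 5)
Your proposal is correct and follows essentially the same route as the paper's proof: both start from the infinite-product formula $\sigma=\prod_{k\ge1}C_k(J^u_{-k})^{-1}$, differentiate logarithmically via the Leibniz rule, transport the differentiating direction to $f_*^{-k}X^u$ by the chain rule, expand $Y(\log J^u)$ using metric compatibility of the connection, and obtain uniform convergence from the exponential decay $\|f_*^{-k}X^u\|\le C\lambda^k\|X^u\|$ together with boundedness of $e$ and $\nabla_{(\cdot)}e$. Your added observation that the summands are invariant under rescaling $e_{-k}\mapsto\phi e_{-k}$ is a nice justification of the "any differentiable $e_{-k}$" clause that the paper states but does not spell out.
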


\begin{remark*}
  (1) If we evaluate both sides of the equation at $x$, then $e_{-k}= e(f^{-k}x)$, and when being differentiated, $e_{-k}$ is a restriction of $e$ to a neighborhood of $x_{-k}=f^{-k}x$.
  (2) Due to uniform convergence, $ X^u(\sigma) /\sigma$ is uniform continuous over $K$.
  (3) An algorithm computing this expansion would converge to the true solution.
  (4) We can show that this term is Holder-continuous using the basic arguments in the appendix of \cite{Ni_asl}.
\end{remark*}

\begin{proof}
The conditional SRB measure $\sigma$ on $\cV^u$ is the result of
evolving the Lebesgue measure starting from the infinite past.
More specifically, 
\[ \begin{split}
  \sigma
  = \prod _{k=1} ^\infty C_k \left(J^u_{-k} \right)^{-1},
\end{split} \]
where $C_k$ is constant over $\cV^u_k$ to keep the total conditional measure at $1$,
and $J^u$ is the unstable Jacobian computed with respect to the Riemannian metric on $\cV^u$,
more specifically, for any $e$,
\begin{equation} \begin{split} \label{e:hotel}
  J^u
  := \frac{\| f_* e\|}{\|e\|}
  = \left(\frac{\ip{ f_* e, f_* e} }{\ip{e, e}}\right)^{0.5}.
\end{split} \end{equation}
Notice $J^u$ does not depend on the particular choice of $e$.
By the Leibniz rule of differentiation,
and the notation convention explained in equation~\eqref{e:notation},
\begin{equation} \begin{split} \label{e:included}
  \frac 1\sigma X^u(\sigma)
  = - \sum_{k=1} ^\infty \frac {X^u (J^u_{-k})}{ J^u_{-k} }
  = - \sum_{k=1} ^\infty \frac {(f^{-k}_* X^u) J^u_{-k}}{ J^u_{-k}}.
\end{split} \end{equation}

To get the equation in the lemma, substitute~\eqref{e:hotel} into~\eqref{e:included}.
For any vector $Y\in V^u$,
\[ \begin{split}
  &Y(J^u) 
  = Y\left( \frac{\ip{f_*e, f_*e} }{ \ip{e, e}} \right)^{0.5}
  = \frac 1 2 (J^u)^{-1} Y\left( \frac{\ip{f_*e, f_*e}}{ \ip{e, e}} \right) \\
  =& \frac 1 2 (J^u)^{-1} 
  \frac{\ip{f_*e, f_*e}}{ \ip{e, e}}
  \left[ \frac{f_*Y\ip{f_*e, f_*e} }{\ip{f_*e, f_*e}} 
  - \frac{Y\ip{e, e}}{\ip{e, e}} \right]
  = J^u 
  \left[ \frac{\ip{\nabla_{f_*Y}f_*e, f_*e} }{\ip{f_*e, f_*e}} 
  - \frac{\ip{\nabla_Y e, e}}{\ip{e, e}} \right].
\end{split} \]
In the last equality, we applied the rule for differentiating the Riemannian metric.

To see uniform convergence, take $e$ as the vector field such that $e$ and its derivative in the unit unstable direction are bounded on $K$, then the series is controlled by the exponentially shrinking term, $\|f_*^{-k} X^u\| \le C \lambda^{-k}\|X^u\|$.
\end{proof}

\subsection{Transforming to the derivative of volume ratio}
\hfill\vspace{0.1in}
\label{s:transform2pi}

Computing $\diverg ^u X^u$ is difficult since $X^u$ is typically not differentiable even within an unstable manifold, so we can not compute the pointwise value of $\div^u X^u$ as the sum of directional derivatives.
However, $\diverg^uX^u$ can be more regular than typical distributions, and Ruelle proved its Holder continuity in \cite{Ruelle_diff_maps_erratum}.
This extra regularity is not too surprising, because (1) the summation has some regularizing effects, and (2) the divergence should be more naturally interpreted as the derivative of transfer operators on measures, rather than the sum of directional derivatives \cite{TrsfOprt}.

The main goal of this subsection is to transform $\diverg^uX^u$ to the derivative of $\varpi$, the volume ratio of the holonomy map along the stable direction.
This gets rid of directional derivatives, which are distributions.
To achieve this, we first define $\varpi$, then give a quick and formal derivation of the new expression for $\diverg^uX^u$, and then prove it rigorously.

\subsubsection{Definitions of \texorpdfstring{$\varpi$}{pi} and related quantities}
\hfill\vspace{0.1in}
\label{s:define varpi}

As illustrated in figure~\ref{f:local coordinate},
fix an unstable manifold $\cV^u$, let $q\in\R$ be a small parameter, for any $y\in \cV^u$,
define $\eta^q(y):\cV^u\rightarrow \cM$ as the unique curve such that
$\partial \eta^q / \partial q= X$ and $\eta^0(y) = y$.
For fixed $q$, $\cV^{uq}:=\{\eta^q(y):y\in\cV^u\}$ is a $u$-dimensional $C^3$ manifold;
for a small interval of $q$, $\hat \cV^u := \cup_q \cV^{uq}$ is a $u+1$ dimensional manifold.
For any $y\in\cV^u$, 
denote the stable manifold that goes through it by $\cV^s(y)$,
which is not differentiable with respect to $y$ in $C^3$.
Define $\xi^q(y)$ as the unique intersection point of $\cV^s(y)$ and $\cV^{uq}$.

Define $\pi_\eta, \pi_\xi: \hat\cV^u \rightarrow \cV^u$ such that
$\pi_\eta(x)=(\eta^q)^{-1}x$, $\pi_\xi(x) = (\xi^q)^{-1}x$,
for any $x\in \cV^{uq}$ and small $q\in \R$.
Denote $\eta^q_*, \xi^q_*$ as the pushforward operator of $\eta^q, \xi^q$.
For any small $q\in\R$, define 
\[ \begin{split}
  \eta_*e(x):= \eta^q_*e(\pi_\eta(x)), \quad
  \xi_* e := \xi^q_*e(\pi_\xi(x)), \quad
  \textnormal{for } x\in \cV^{uq}.
\end{split} \]
Then $\eta_*e$ and $\xi_*e$ are two parallel $u$-vector fields on $\hat\cV^u$.
(We may equivalently define $\eta_*e:=\cup_q \eta^q_*e$, $\xi_* e:=\cup_q \xi^q_*e$.)
Define a function $\varpi$ on $\hat\cV^u$ as the volume ratio,
\begin{equation} \label{e:define pi}
  \frac{\eta_*e}{\|e\circ \pi_\eta \|}
  = \varpi 
  \frac{\xi_*e}{\|e\circ \pi_\xi \|}.
\end{equation}
By transversal absolute continuity, $\varpi$ is a well-defined measurable function.
We can view $\varpi$ as a function of $x$, or a function of $(y_1, q)$, or a function of $(y_2, q)$.

\begin{figure}[ht] \centering
  \includegraphics[width=0.6\textwidth]{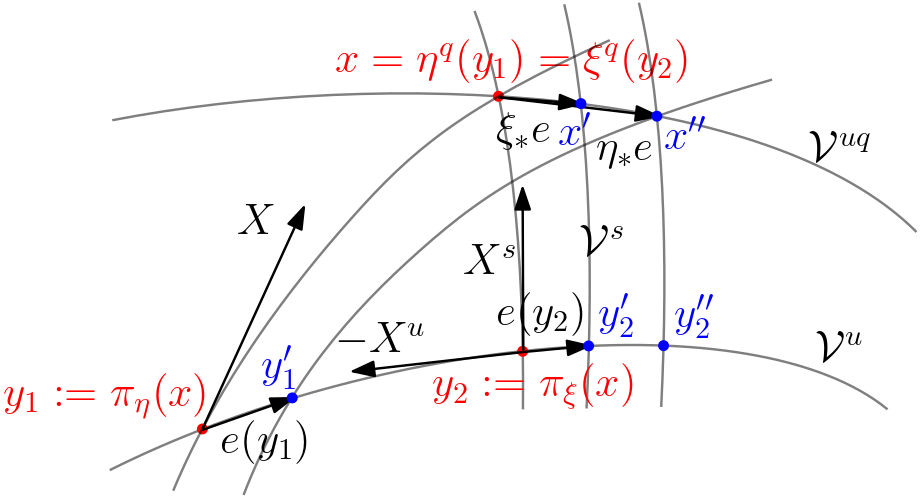}
  \caption{Definitions of projections.}
  \label{f:local coordinate}
\end{figure}

\subsubsection{Intuitive proof}
\hfill\vspace{0.1in}
\label{s:intuitive varpi}

We give an intuitive but non-rigorous proof of a simplified version of \cref{l:hen} in \cref{s:rigorous varpi}, the main lemma of \cref{s:transform2pi}, that is,
\begin{equation*}\begin{split}
  1 + q\, \div^u X^u = \varpi(x) + O(q^2).
\end{split}\end{equation*}
To see this, assume that $\cM=\R^2$ and unstable dimension $u=1$, so we can roughly express the quantities in \cref{f:local coordinate} as
\begin{equation*}\begin{split}
  e(y_1) = y_1'-y_1,
  \quad \textnormal{} \quad 
  e(y_2) = y_2'-y_2,
  \\
  \xi_*e (x) = x' - x,
  \quad \textnormal{} \quad 
  \eta_*e (x) = x'' - x.
\end{split}\end{equation*}
We may assume that all the above quantities have size $O(1)$, since we only care about their quotients.
With these, we can express the two quantities of interest.

By the definition of $\varpi$ in \cref{e:define pi}, we have 
\begin{equation*}\begin{split}
  \varpi(x)  
  :=\varpi(y_1, q)  
  := \frac{\|\eta_*e\|}{\|\xi_*e\|}
  \frac{\|e\circ \pi_\xi \|}{\|e\circ \pi_\eta \|}
  = \frac{\|x''-x\|}{\|x'-x\|} \frac{\|e(y_2)\|}{\|e(y_1)\|}
\end{split}\end{equation*}
We may assume that there is a Lipschitz-continuous function $k$ such that
\begin{equation*}\begin{split}
  \|x'-x\| = \|y_2'-y_2\| (1+k(y_2, y_2')q),
  \quad \textnormal{} \quad 
  \|x''-x\| = \|y_2''-y_2\| (1+k(y_2, y_2'')q)
\end{split}\end{equation*}
Since $y_2''-y_2'=O(q)$, we have
\begin{equation*}\begin{split}
  \frac{\|x''-x\|}{\|x'-x\|} 
  = \frac{\|y_2''-y_2\|}{\|y_2'-y_2\|} 
  \,\frac{1+k(y_2, y_2'')q}{1+k(y_2, y_2')q} 
  = \frac{\|y_2''-y_2\|}{\|y_2'-y_2\|} +O(q^2).
\end{split}\end{equation*}
Hence,
\begin{equation*}\begin{split}
  \varpi(x)
  = \frac{\|y_2''-y_2\|}{\|y_2'-y_2\|} \frac{\|e(y_2)\|}{\|e(y_1)\|}  + O(q^2)
  = \frac{\|y_2''-y_2\|}{\|e(y_1)\|}  + O(q^2).
\end{split}\end{equation*}

For $\div^uX^u$, note that $y_2- y_1$ is the distance along the direction of $X^u(y_1)$ for a duration of $q$, $y_2''- y_1'$ is the distance along the direction of $X^u(y_1')$ for a duration of $q$, so, roughly speaking, we have
\begin{equation*}\begin{split}
  \div^uX^u (y_1) 
  := \frac{\|(y_2''- y_1')- (y_2-y_1)\|}{\|e(y_1)\| q} + O(q)
  = \frac{\|(y_2''- y_2) - (y_1'-y_1)\|}{\|e(y_1)\| q} + O(q).
\end{split}\end{equation*}
The last equality transfers the derivative of $X^u$ along unstable directions into a derivative of a volume ratio.
This is the pivotal step where we get rid of the non-differentiability of oblique projections.
It corresponds to \cref{l:devil}, which is the rigorous geometric version of essentially the same trick.

Hence, we obtain a small-perturbation version of \cref{l:hen},
\begin{equation*}\begin{split}
  1 + q\, \div^u X^u
  = 1 + \frac{\|(y_2''- y_2) - (y_1'-y_1)\|}{\|e(y_1)\|}+ O(q^2)
  \\
  = 1 + \frac{\|(y_2''- y_2)\| - \|(y_1'-y_1)\|}{\|e(y_1)\|}+ O(q^2)
  = \frac{\|(y_2''- y_2)\|}{\|e(y_1)\|} + O(q^2)
  = \varpi(x) + O(q^2).
\end{split}\end{equation*}

\subsubsection{Rigorous proof}
\hfill\vspace{0.1in}
\label{s:rigorous varpi}

\begin{lemma}[expression of $\nabla_{X^s} \eta_* e$] \label{l:Richter}
Denote $\nabla_e X:=\sum_i e_1\wedge\cdots \wedge \nabla_{e_i}X\wedge\cdots\wedge e_u$.
On $\cV^u$,
  \[ \begin{split}
    \nabla_{X^s}(\eta_* e_i) 
    = \nabla_{e_i} X - \nabla_{X^u} e_i \,,\quad
    \nabla_{X^s}(\eta_* e) 
    = \nabla_{e} X - \nabla_{X^u} e.
  \end{split} \]
\end{lemma}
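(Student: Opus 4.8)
The plan is to exploit that $\eta^q$ is by construction the flow of the perturbation field $X$, so that $\eta_* e_i$ is nothing but the flow-out of the leaf vector field $e_i$ along $X$. Concretely, $\eta_* e_i$ is invariant under the flow $\eta^q$ on $\hat\cV^u$, i.e. $\eta_* e_i(\eta^q y)=\eta^q_*(\eta_* e_i)(y)$, which is precisely the statement that its Lie derivative along $X$ vanishes, $\mathcal L_X(\eta_* e_i)=0$. Since $X\in C^3$ and $e_i$ is $C^1$ on the leaf, the flow map $\eta^q$ is $C^3$ and $\eta_* e_i$ is a $C^1$ vector field on the $(u+1)$-dimensional manifold $\hat\cV^u$, so all the derivatives below are legitimate. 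Note also that the tangent space of $\hat\cV^u$ along $\cV^u$ equals $V^u\oplus\R X^s$ (because $X=X^u+X^s$ with $X^u\in V^u$), so both $X^u$ and $X^s$ are tangent to $\hat\cV^u$ and the covariant derivatives $\nabla_{X^u}(\eta_* e_i)$, $\nabla_{X^s}(\eta_* e_i)$ are well-defined despite the roughness of the stable foliation across leaves.

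Next I would convert the Lie-derivative identity into a covariant one. Because the Riemannian connection is torsion-free, $\mathcal L_X(\eta_* e_i)=[X,\eta_* e_i]=\nabla_X(\eta_* e_i)-\nabla_{\eta_* e_i}X=0$, hence $\nabla_X(\eta_* e_i)=\nabla_{\eta_* e_i}X$. Restricting to $\cV^u$, where $\eta_* e_i=e_i$ by definition, the right-hand side becomes $\nabla_{e_i}X$, giving $\nabla_X(\eta_* e_i)=\nabla_{e_i}X$ on $\cV^u$.

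The final algebraic step splits $X=X^u+X^s$. Since $X^u\in V^u$ is tangent to $\cV^u$ and $\eta_* e_i$ restricted to $\cV^u$ coincides with $e_i$, the covariant derivative in the tangent direction $X^u$ only sees this restriction, so $\nabla_{X^u}(\eta_* e_i)=\nabla_{X^u}e_i$ on $\cV^u$. Subtracting, $\nabla_{X^s}(\eta_* e_i)=\nabla_X(\eta_* e_i)-\nabla_{X^u}(\eta_* e_i)=\nabla_{e_i}X-\nabla_{X^u}e_i$, which is the single-vector claim. The $u$-vector version then follows from the Leibniz rule for $\nabla$ on the wedge $\eta_* e=\eta_* e_1\wedge\cdots\wedge\eta_* e_u$: differentiating termwise in the direction $X^s$ and using $\eta_* e_j=e_j$ on $\cV^u$ gives $\sum_i e_1\wedge\cdots\wedge(\nabla_{e_i}X-\nabla_{X^u}e_i)\wedge\cdots\wedge e_u=\nabla_e X-\nabla_{X^u}e$, with $\nabla_e X$ as defined in the statement and $\nabla_{X^u}e$ expanded by the same Leibniz rule.

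I expect the main obstacle to be the regularity bookkeeping rather than the algebra: one must confirm that $\eta_* e_i$ is genuinely $C^1$ on $\hat\cV^u$ (which relies on $X\in C^3$ and on $e$ being $C^1$ along each leaf) and that every differentiation direction used, namely $X$, $X^u$, and $X^s$, is tangent to $\hat\cV^u$, so that the flow-invariance identity $\mathcal L_X(\eta_* e_i)=0$ and its torsion-free rewriting hold pointwise even though the stable directions vary only H\"older-continuously from leaf to leaf. Once this is secured the identity is exact, and no limiting or approximation argument is needed.
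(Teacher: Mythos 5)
Your proposal is correct and follows essentially the same route as the paper's own proof: decompose $\nabla_{X^s}=\nabla_X-\nabla_{X^u}$, use $L_X(\eta_*e)=0$ (since $\eta$ is the flow of $X$) together with torsion-freeness to get $\nabla_X(\eta_*e)=\nabla_{e}X$ on $\cV^u$, and use tangency of $X^u$ to $\cV^u$ with $\eta_*e=e$ there to get $\nabla_{X^u}(\eta_*e)=\nabla_{X^u}e$. The extra regularity bookkeeping you supply (tangency of $X^s$ to $\hat\cV^u$, $C^1$-ness of $\eta_*e_i$ on $\hat\cV^u$) is a sound elaboration of what the paper leaves implicit.
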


\begin{remark*}
  $\nabla_{e} X$ is a function, not a distribution, because $X$ is differentiable on $\cM$.
  $\nabla_{X^u} e$ also is a function, 
  since it only requires that $e$ be differentiable along the direction of $X^u$.
 The differentiability of $X^u$ is not required and should not be required, as $X^u$ is not differentiable.
  \end{remark*}

\begin{proof}
With our notation, the proof below works for both $e$ and $e_i$.
First decompose,
\[ \begin{split}
  \nabla_{X^s}(\eta_* e) 
  = \nabla_{X}(\eta_* e) - \nabla_{X^u}(\eta_* e) 
\end{split} \]
Since $\eta$ is the flow of $X$, the Lie derivative
$L_{X}(\eta_* e) = 0$, hence, on $\cV^u$,
\[ \begin{split}
  \nabla_{X}(\eta_* e) = \nabla_{\eta_* e}X = \nabla_{e} X,
\end{split} \]
where $\eta_*e = e$ on $\cV^u$.
Since $X^u$ is a vector field on $\cV^u$,
$\nabla_{X^u}(\eta_* e)  = \nabla_{X^u}e$.
\end{proof}

\begin{lemma} \label{l:devil}
  Further assume that $\cV^s(y)$ varies smoothly with $y$, then on $\cV^u$,
  \[
    \nabla_{X^s}(\xi_* e_i) 
    = \nabla_{e_i} X^s, \quad
    \nabla_{X^s}(\xi_* e) 
    = \nabla_{e} X^s.
  \]
\end{lemma}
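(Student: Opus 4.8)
The plan is to mirror the proof of Lemma~\ref{l:Richter}, but with the flow of $X$ replaced by the flow of its stable projection $X^s$. The first and decisive step is the geometric observation that, on $\cV^u$, the map $\xi$ is generated by $X^s$, i.e. $\partial_q \xi^q(y)|_{q=0} = X^s(y)$. To establish this, I would fix $y\in\cV^u$ and impose the two constraints that define $\dot\xi(0):=\partial_q\xi^q(y)|_{q=0}$. Since $\xi^q(y)\in\cV^s(y)$ for every $q$, the curve stays inside a single stable leaf, so $\dot\xi(0)\in T_y\cV^s(y)=V^s(y)$. On the other hand $\xi^q(y)\in\cV^{uq}=\eta^q(\cV^u)$, so we may write $\xi^q(y)=\eta^q(z(q))$ with $z(q)\in\cV^u$ and $z(0)=y$; differentiating and using $\partial_q\eta^q=X$ together with $\eta^0=\mathrm{id}$ gives $\dot\xi(0)=X(y)+\dot z(0)$ with $\dot z(0)\in T_y\cV^u=V^u(y)$. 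Decomposing $X=X^u+X^s$ and projecting the equality $\dot\xi(0)=X^u+X^s+\dot z(0)$ onto $V^u$ along $V^s$, the left side contributes nothing, forcing $\dot z(0)=-X^u$ and hence $\dot\xi(0)=X^s$.

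With this identification in hand, I would argue exactly as in Lemma~\ref{l:Richter}. By construction each $\xi_* e_i$ is the pushforward of $e_i$ from $\cV^u$ along the $\xi$-flow, so it is invariant under that flow, and its Lie derivative along the generating field vanishes: $L_{X^s}(\xi_* e_i)=0$ on $\cV^u$. Expanding the bracket with the torsion-free Riemannian connection, $0=L_{X^s}(\xi_* e_i)=\nabla_{X^s}(\xi_* e_i)-\nabla_{\xi_* e_i}X^s$, and using $\xi_* e_i=e_i$ on $\cV^u$, I obtain $\nabla_{X^s}(\xi_* e_i)=\nabla_{e_i}X^s$, which is the first asserted identity. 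The statement for the $u$-vector $e$ then follows by applying the Leibniz rule for $\nabla$ across the wedge product $\xi_* e=\xi_* e_1\wedge\cdots\wedge\xi_* e_u$ and collecting terms according to the definition $\nabla_e X^s=\sum_i e_1\wedge\cdots\wedge\nabla_{e_i}X^s\wedge\cdots\wedge e_u$ recorded in Lemma~\ref{l:Richter}.

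The one place where the added hypothesis is indispensable is precisely the appearance of $\nabla_{(\cdot)}X^s$ and of the flow $\xi$ itself. In general $X^s$ is only H\"older continuous, so neither the flow it generates nor the derivative $\nabla_{e_i}X^s$ is meaningful, and the formula would be vacuous. The assumption that the stable foliation is $C^1$ with $C^3$ leaves makes $V^s$, and hence $X^s$, of class $C^1$, which legitimizes both the Lie-derivative computation and the right-hand sides of the claimed formulas. I expect this regularity bookkeeping, rather than the algebra, to be the main obstacle: one must verify that the $\xi$-flow and its Lie derivative are genuine $C^1$ objects on $\cV^u$ under the stated hypothesis, so that the formal manipulation paralleling Lemma~\ref{l:Richter} is fully justified.
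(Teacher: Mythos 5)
Your proposal is correct and follows essentially the same route as the paper: you identify $\partial_q\xi^q|_{q=0}=X^s$ by writing $\xi^q=\eta^q\circ z(q)$ (the paper's $y^q$) and splitting along $V^u\oplus V^s$, then invoke the vanishing Lie derivative $L_{X^s}(\xi_*e)=0$ and torsion-freeness to convert $\nabla_{X^s}(\xi_*e)$ into $\nabla_{e}X^s$. The only difference is presentational — you spell out why $\dot\xi(0)\in V^s$ and where the $C^1$-foliation hypothesis is used, which the paper leaves implicit.
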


\begin{remark*}
  (1)
  Intuitively, this lemma just says that $(x'-x) - (y_2'-y_2) = (x'-y_2') - (x-y_2)$ in \cref{f:local coordinate}.
  (2)
  The extra smoothness assumption is temporary; it makes $\xi_*e$ a differentiable $u$-vector field on $\cV^{uq}$.
  Without this assumption, we may temporarily interpret this lemma in the sense of distributions, and later we can see that $\xi_* e$ is still differentiable in the direction of $X^s$, by the expansion formula in section~\ref{s:transform3}.
\end{remark*}

\begin{proof}
In $\hat \cV^u$, $\partial \xi^q/\partial q$ is a smooth vector field.
  We claim that $\partial \xi^q/\partial q = X^s$ on $\cV^u$.
  To see this, define $y_1^q:\cV^u\rightarrow \cV^u$ as
  \[ \begin{split}
    y_1^q(y_2) = \pi_\eta( \xi^q(y_2)) ,
    \quad \textnormal{or equivalently,}\quad
    \xi^q(y_2) = \eta^q(y_1^q(y_2)) .
  \end{split} \]
  Fix $y_2$, differentiate the second equation to $q$, 
  evaluate at $q=0$, where $\partial\eta^q/\partial y_1 = Id$,
  \[ \begin{split}
  \pp {\xi^q} q = \pp {\eta^q} {y_1} \pp {y_1^q}q + \pp {\eta^q} q = \pp {y_1^q}q + X.
  \end{split} \]
  Since $\partial y_1^q/\partial q\in V^u$, $\partial \xi^q /\partial q\in V^s$,
  and that $X$ is uniquely decomposed in $V^u \bigoplus V^s$ into $X=X^u\oplus X^s$,
  we see that 
  \begin{equation} \begin{split} \label{e:mozart}
    \pp {\xi^q} q = X^s  ,\quad
    \pp {y_1^q} q = -X^u  
    \quad\textnormal{at}\quad
    q=0    .
  \end{split} \end{equation}
  Hence, on $\cV^u$,
  \[ \begin{split}
    \nabla_{X^s}(\xi_* e) 
    = \nabla_{\partial \xi^q/\partial q}(\xi_* e) 
    = \nabla_{\xi_* e} {\partial \xi^q/\partial q}
    = \nabla_{e} X^s,
  \end{split} \]
  where the second equality is due to the Lie derivative $L_{\partial \xi^q/\partial q}(\xi_* e)=0$
  by definitions,
  and the last equality uses $\xi_*e =e$ at $q=0$.
\end{proof}

\begin{lemma}\label{l:hen}
  Under the same assumption as lemma~\ref{l:Richter},
  $ X^s(\varpi)= \diverg^u X^u$ on $\cV^u$.
\end{lemma}

\begin{remark*}
To show that this equivalence persists in general cases where stable foliation is not smooth, first establish the lemma on a smooth foliation and then evolve backward in time to approach the stable foliation.
\end{remark*}

\begin{proof}
By definitions,
  \[ \begin{split}
      y_1^q(x) = \pi_\eta(\xi^q(x_1))
      ,\quad
      y_2 = \pi_\xi(\xi^q(y_2)).
  \end{split} \]
Differentiating to $q$, and using \cref{e:mozart}, we get
 \begin{equation} \begin{split} \label{e:beifeng}
    -X^u = \pp {y_1^q} q
    = \pi_{\eta*} \pp{\xi^q}q
    = \pi_{\eta*} X^s
    ,\quad 
    0= \pi_{\xi*} \pp{\xi^q}q 
    = \pi_{\xi*} X^s 
    \quad \textnormal{on} \quad \cV^u.
  \end{split} \end{equation}
Recall that ${X^s} \| e\circ\pi_\eta \|^2
    = (\pi_{\eta*} X^s) \| e \|^2$ by \cref{e:notation}, we get
  \[ \begin{split}
    {X^s} \| e\circ\pi_\eta \|^2
    = (\pi_{\eta*} X^s) \| e \|^2
    = \nabla_{-X^u} \| e \|^2
    = 2 \ip{\nabla_{-X^u} e, e}, \quad
    X^s \| e\circ\pi_\xi \|^2 =0.
  \end{split} \]

Take the inner product of each side of equation~\eqref{e:define pi} with itself, 
  \[
    \left\| \eta_* e \right\|^2 \| e\circ\pi_\xi \|^2
    = \varpi^2 \| \xi_* e\|^2 \| e\circ\pi_\eta \|^2.
  \]
Differentiating in the direction of $X^s$ and evaluating on $\cV^u$, applying \cref{e:beifeng}, noting that $\varpi = 1$, $\eta_*=\xi_*=Id$ on $\cV^u$, 
  \[ \begin{split}
    \ip{\nabla_{X^s}(\eta_* e), e} 
    = X^s(\varpi) \ip{ e,  e} 
    +  \ip{\nabla_{X^s}(\xi_* e), e}
    +  \ip{\nabla_{-X^u}e,e}.
  \end{split}\]
  By lemma~\ref{l:Richter} and~\ref{l:devil}, we have
  \[ \begin{split}
    X^s(\varpi)
    = \frac 1 {\ip{e,e}} 
      \ip{\nabla_{e} X - \nabla_{X^u} e - \nabla_{e} X^s + \nabla_{X^u} e, e}
    = \frac 1 {\ip{e,e}} \ip{\nabla_{e} X^u, e}.
  \end{split} \]

  To see that this is a divergence, recall that the Riemannian connection within a submanifold $\cV^u$ is the orthogonal projection 
  of that on the background manifold $M$, so
  \[ \begin{split}
    X^s(\varpi) 
    = \frac 1 {\ip{e,e}} \ip{\nabla_{e}^u X^u, e}_u
    = \frac 1 {\ip{e,e}} 
    \sum_{i=1}^u \ip{e_1\wedge\cdots \wedge 
    \sum_j e^j_u (\nabla_{e_i}^u X^u) e_j 
    \wedge\cdots \wedge e_u, e}_u.
  \end{split} \]
Here $\nabla^u$, $\ip{\cdot,\cdot}_u$, and $\{e^j_u\}_{j=1}^u$ are the Riemannian connection, metric, and dual basis of $\{e_j\}_{j=1}^u$ within $\cV^u$.
The terms with $j\ne i$ vanish because the same direction appears twice in the exterior product, hence
  \[ \begin{split}
    X^s(\varpi) = 
    \sum_{i=1}^u e^i_u (\nabla_{e_i}^u X^u).
  \end{split} \]
  This is a contraction of $\nabla^u X^u$ within $\cV^u$, 
  which is the definition of $\diverg^u X^u$.
\end{proof}

\subsection{Expanding the volume ratio} \label{s:transform3}
\hfill\vspace{0.1in}

We show that $X^s(\varpi)$ is a continuous function by expanding the volume ratio into a converging summation.
The main problem now is that $\xi_*$ seems non-differentiable.
But $\xi_*$ is in fact a so-called holonomy map that is differentiable along the projection direction.
To prove the differentiability and write the derivative, we further write $\xi_*$ as an infinite pushfoward, whose derivative is an infinite summation.
This finally expands the derivative of the volume ratio and hence Lebesgue unstable divergence.

\begin{lemma}[expansion of Lebesgue unstable divergence]\label{l:hao}
  $X^s(\varpi)$ has the following expansion formula that converges uniformly on $K$
  \[ \begin{split} 
    \diverg^uX^u
    = X^s(\varpi)
    = \frac{\ip{\nabla_{e}X, e}}{\|e\|^2} 
    + \sum_{k=0}^\infty
    \frac {\ip{\nabla_{f_*^{k+1}X^s}f_*^{k+1}\eta_* e,f_*^{k+1} e}}
    {\|f_*^{k+1} e\|^2}
    -\frac{\ip{\nabla_{f_*^{k}X^s}f_*^{k}\eta_* e,f_*^{k} e}}
    {\|f_*^k e\|^2} .
  \end{split} \]
\end{lemma}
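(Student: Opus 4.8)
The plan is to recognize the asserted series as a telescoping sum and to reduce the lemma to a single limiting statement expressing the collapse of the stable foliation under forward iteration. Write $a_k := \ip{\nabla_{f_*^k X^s} f_*^k \eta_* e,\, f_*^k e}/\|f_*^k e\|^2$, so that each summand equals $a_{k+1}-a_k$ and the series telescopes to $\lim_{K\to\infty} a_K - a_0$. By Lemma~\ref{l:Richter}, $a_0 = \ip{\nabla_e X - \nabla_{X^u} e,\, e}/\|e\|^2$, hence $\ip{\nabla_e X,e}/\|e\|^2 - a_0 = \ip{\nabla_{X^u}e,e}/\|e\|^2$, and the right-hand side of the lemma collapses to $\ip{\nabla_{X^u}e,e}/\|e\|^2 + \lim_K a_K$. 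On the other hand, Lemma~\ref{l:hen} gives $\diverg^u X^u = X^s(\varpi) = \ip{\nabla_e X^u, e}/\|e\|^2$. Setting $c := \ip{\nabla_e X^u - \nabla_{X^u}e,\, e}/\|e\|^2$, the lemma is therefore equivalent to the single claim $\lim_{K\to\infty} a_K = c$.

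To evaluate this limit I would compare the smooth flow projection $\eta_* e$ with the rough stable projection $\xi_* e$, introducing $b_k := \ip{\nabla_{f_*^k X^s} f_*^k \xi_* e,\, f_*^k e}/\|f_*^k e\|^2$ and $Z := \eta_* e - \xi_* e$. The first step is to show $a_k - b_k = c$ for every $k$. Writing $\eta_*e = (\varpi\,\|e\circ\pi_\eta\|/\|e\circ\pi_\xi\|)\,\xi_*e$ from eq~\eqref{e:define pi}, so that $Z$ vanishes on $\cV^u$, a direct differentiation using eq~\eqref{e:mozart} shows that $D_0 := \nabla_{X^s} Z = c\,e$ is a scalar multiple of $e$ on $\cV^u$. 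The key structural point is that $D_k := \nabla_{f_*^k X^s}(f_*^k Z)$ obeys the exact recursion $D_{k+1} = f_* D_k$: expanding by the chain rule for the pushforward (appendix~\ref{a:f*}) produces a correction $(\nabla f_*)(f_*^k X^s,\, f_*^k Z)$, which vanishes because $f_*^k Z = 0$ along the orbit and $\nabla f_*$ is linear in the transported vector. Hence $D_k = c\, f_*^k e$, so that $a_k - b_k = \ip{D_k, f_*^k e}/\|f_*^k e\|^2 = c$ for all $k$, the exact alignment $D_k \parallel f_*^k e$ neatly defeating the non-orthogonality of $f_*$.

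It then remains to prove $\lim_{K\to\infty} b_K = 0$, which is the heart of the argument and the step I expect to be the main obstacle. This is precisely the statement that the derivative of the stable projection is annihilated by infinite forward pushforward: the differentiating direction contracts, $\|f_*^K X^s\| \le C\lambda^K \to 0$, while the normalized pushed-forward $u$-vector $f_*^K \xi_* e/\|f_*^K e\|$ and the associated covariant-derivative data stay bounded, so the contraction wins even though $f_*^K \xi_* e$ itself grows. The delicacy is that $\xi_* e$ is only H\"older, so $b_K$ is a genuine function only under the $C^1$ stable-foliation hypothesis of Lemma~\ref{l:devil}; for the general fractal case one must first collapse the stable foliation by approximating it with the backward iterates of a smooth foliation, exactly as in the remark after Lemma~\ref{l:hen}. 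Granting $\lim_K b_K = 0$, the identity $a_k = b_k + c$ yields $\lim_K a_K = c$ and hence the formula. Finally, I would establish the claimed uniform convergence on $K$ by bounding the increments $a_{k+1}-a_k = b_{k+1}-b_k$ by a geometric sequence $C\lambda^k$, using the contraction of $f_*^k X^s$, the boundedness of $e/\|e\|$ and $\nabla_{(\cdot)} e$ on unstable manifolds, and the control of $\|f_*^k e\|$ through the unstable Jacobian of eq~\eqref{e:hotel}; this simultaneously shows the limiting function is continuous on $K$.
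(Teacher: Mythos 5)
Your reduction is set up correctly as far as it goes: the series is indeed telescoping, Lemma~\ref{l:Richter} identifies $a_0$, and your identity $a_k-b_k=c$ via the recursion $D_{k+1}=f_*D_k$ (the $\nabla f_*$ correction dies tensorially because $f_*^kZ=0$ on $\cV^u$, and $D_0=\sum_i e_1\wedge\cdots\wedge[e_i,X^u]\wedge\cdots\wedge e_u\in\wedge^uV^u$ since brackets of fields tangent to $\cV^u$ stay tangent) is a correct and rather elegant observation under the $C^1$-foliation hypothesis. The genuine gap is the claim $\lim_K b_K=0$, and specifically the reason you give for it. Your heuristic --- the differentiating direction contracts while ``the associated covariant-derivative data stay bounded'' --- fails on the attractor: expanding by the Leibniz rule of Lemma~\ref{l:Leibniz}, $\nabla_{f_*^KX^s}f_*^K\xi_*e=f_*^K\nabla_e X^s+\sum_{n=0}^{K-1}f_*^{K-n-1}(\nabla_{f_*^nX^s}f_*)f_*^ne$, and the unstable part of the first term contributes the $K$-independent quantity $\sum_i e^i_u(P^u\nabla_{e_i}X^s)$ to $b_K$, because dual bases transform contravariantly and normalized unstable components are preserved, not contracted, by $f_*^K$. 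So $b_K$ is not small term by term; its limit is an $O(1)$ boundary term plus a convergent series of $\nabla f_*$ corrections, and the vanishing of that total is exactly the content of the lemma. Your argument is therefore circular at its heart: you have rewritten the statement as ``$\lim_K b_K=0$'' and then asserted it with an invalid bound. (Indeed, the unbounded transversal derivative data is precisely the pathology of fractal attractors that the lemma is designed to circumvent.)

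The paper never computes $\lim_K a_K$ directly. It uses the collapse of the stable holonomy only at the scalar level, $\lim_k\|f_*^k\xi_*e\|/\|f_*^k(e\circ\pi_\xi)\|=1$, to rewrite $\varpi^2$ as a telescoping infinite product, differentiates term by term, and identifies the total via $X^s(\varpi)=\diverg^uX^u$ from Lemma~\ref{l:hen}. If you want to keep your route, the fix is of the same flavour: note $b_K=\tfrac12X^s\log\left(\|f_*^K\xi_*e\|^2/\|f_*^K(e\circ\pi_\xi)\|^2\right)$, that the argument of the logarithm converges uniformly to $1$, and that once the derivatives $b_K$ are shown to converge uniformly you may conclude $\lim_K b_K=X^s(0)=0$; but the step ``derivative of the limit equals limit of the derivatives'' must be invoked explicitly, since uniform convergence of $g_K\to 0$ alone says nothing about $X^s(g_K)$. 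Finally, your uniform-convergence sketch for the increments omits the key cancellation: the unstable ($P^u$) parts of $a_{k+1}$ and $a_k$ are each $O(1)$ and cancel exactly only because $\wedge^uV^u$ is one-dimensional (the paper's $S_{k2}=0$); the geometric bound comes from the new $\nabla f_*$ term and the difference of the stable parts, not from any naive bound on each $a_k$ separately.
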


\begin{remark*}
Due to uniform convergence, $\diverg^uX^u$ is continuous over $K$.
A more careful analysis would show that it is Holder continuous over $K$ (see \cite{Ni_asl} for example).
\end{remark*}

\begin{proof}
By definition of the stable manifold, 
for $x\in\hat \cV^u$,
$\lim_{k\rightarrow \infty} f^k(\pi_\xi x) = \lim_{k\rightarrow \infty} f^k(x)$.
Hence, 
\[ \begin{split} 
  \lim_{k\rightarrow \infty} 
  {\|f_*^k\xi_* e\|^2} / {\|f_*^k (e\circ\pi_\xi) \|^2} = 1.
\end{split} \]
Because $\xi_*e$ is parallel to $\eta_*e$, 
\[ \begin{split}
  \frac {\|\eta_*e\|} {\|\xi_*e\|}
  = \lim_{k\rightarrow \infty} \frac {\|f_*^k \eta_*e\|} {\|f_*^k\xi_*e\|}
  = \lim_{k\rightarrow \infty} \frac {\|f_*^k \eta_*e\|} {\|f_*^k(e\circ \pi_\xi)\|}
\end{split} \]
We can use this to replace $\xi_*e$ in the definition of $\varpi$ 
in equation~\eqref{e:define pi}, which yields an infinite pushforward:
\[ \begin{split}
  \varpi^2 
  = \frac{\|e\circ\pi_\xi\|^2} {\|e\circ\pi_\eta\|^2}
    \lim_{k\rightarrow \infty} 
    \frac {\|f_*^k\eta_* e\|^2}{\|f_*^k (e\circ\pi_\xi) \|^2} 
  = \frac {\|\eta_* e \|^2}{\|e\circ \pi_\eta \|^2}
    \prod_{k=0}^\infty
    \frac {\|f_*^{k+1}\eta_* e\|^2}{\|f_*^{k+1} e\circ\pi_\xi\|^2}
    \frac {\|f_*^k e\circ \pi_\xi \|^2} {\|f_*^k\eta_* e\|^2}.
\end{split} \]
Further differentiating in direction $X^s$ would yield an infinite summation.
More specifically, notice that at $\cV^u$, $\eta_*e = e\circ\pi_\xi=e$, $\varpi=1$,
and apply equation~\eqref{e:notation},
\[ \begin{split}
  &X^s(\varpi)
  = \frac{\ip{\nabla_{X^s} \eta_* e, e}}{\|e\|^2} 
  - \frac{ \ip{\nabla_{-X^u} e, e} }{\|e\|^2}
  + \sum_{k=0}^\infty
  \frac 
  {\ip{\nabla_{f_*^{k+1}X^s}f_*^{k+1}\eta_* e,f_*^{k+1} e}}
  {\|f_*^{k+1} e\|^2}
  -\frac 
  {\ip{\nabla_{f_*^{k}X^s}f_*^{k}\eta_* e,f_*^{k} e}}
  {\|f_*^k e\|^2} .
\end{split} \]
The expansion in this lemma is then proved by applying \cref{l:Richter,l:hen}.

Then we prove uniform convergence by estimating the terms in the expansion.
Using the Leibniz rule in appendix~\ref{a:f*} lemma~\ref{l:Leibniz}, we have
\[ \begin{split}
\nabla_{f_*^{k+1}X^s}f_*^{k+1}\eta_* e
= (\nabla_{f_*^k X^s}f_*) f_*^{k} \eta_* e
+ f_* \nabla_{f_*^k X^s} f_*^{k} \eta_* e.
\end{split} \]
Here $\nabla f_*$ is the Riemannian connection of $f_*$ (see appendix~\ref{a:f*} definition~\ref{d:dnabla}).
Note that $\eta_*$ is the identity on $\cV^u$.

Then using the projection operators defined in appendix~\ref{a:projection}, we can decompose the $k$-th term in expansion, $S_k$,
into $S_k=S_{k1} + S_{k2} +S_{k3}$, where
\[ \begin{split}
  S_{k1} := 
    \frac 1
    {\|f_*^{k+1} e\|^2}
    {\ip{(\nabla_{f_*^k X^s}f_*) f_*^{k}e, f_*^{k+1} e}}
  \le 
    C u \lambda^k \|X^s\|.
\end{split} \]
This inequality is straightforward when $u=1$,
because $X^s$ decays exponentially through pushforwards and $\nabla f_*$ is bounded.
For general $u$, recall that we can freely select $e_i$'s as long as their wedge product is $e$;
now let $ \{ f^k_*e_i \}_{ i=1 }^u$ be orthogonal at $f^k x$, hence
\[ \begin{split}
  \frac {\|(\nabla_{f_*^k X^s}f_*) f_*^{k}e_1 \wedge f_*^{k+1}e_2 \wcw f_*^{k+1} e_u\|}
  {\|f_*^{k+1} e\|}
  \le  
  \frac {\|(\nabla_{f_*^k X^s}f_*) f_*^{k}e_1 \|}
  {\|f_*^{k+1} e_1\|}
  \frac 
  {\|f_*^{k+1}e_2 \wcw f_*^{k+1} e_u \|}
  {\|f_*^{k+1}e_2 \wcw f_*^{k+1} e_u \|},
\end{split} \]
which reduces to the case $u=1$.

The term $S_{k2}$ is defined as
\[ \begin{split}
  S_{k2} :=
    \frac 1
    {\|f_*^{k+1} e\|^2}
    {\ip{f_*P^u\nabla_{f_*^{k}X^s}f_*^{k}\eta_* e,f_*^{k+1} e}}
    -\frac 1
    {\|f_*^k e\|^2} 
    {\ip{P^u\nabla_{f_*^{k}X^s}f_*^{k}\eta_* e,f_*^{k} e}}
  =0,
\end{split} \]
This is because $(V^u)^{\wedge u}$ is 1-dimensional, 
so $P^u\nabla_{\pp{}q}f_*^{k}\eta_* e(y)$ and $f_*^k e$ increase by same amounts via the pushforward.

Finally, $S_{k3}$ is defined as
\[ \begin{split}
  S_{k3} :=
    \frac 1
    {\|f_*^{k+1} e\|^2}
    {\ip{f_*P^s\nabla_{f_*^{k}X^s}f_*^{k}\eta_* e,f_*^{k+1} e}}
    -\frac 1
    {\|f_*^k e\|^2} 
    {\ip{P^s\nabla_{f_*^{k}X^s}f_*^{k}\eta_* e,f_*^{k} e}}.
\end{split} \]
The sum $\sum_{k\ge0}S_{k3}$ also converges uniformly on $K$.
To see this, first expand 
\begin{equation*}
    \nabla_{f_*^{k}X^s}f_*^{k}\eta_* e 
    = f_*^k \nabla_{X^s}\eta_* e
    + \sum_{n=0}^{k-1} f_*^{k-n-1}(\nabla_{f_*^nX^s} f_*) f_*^n e,
\end{equation*}
then apply \cref{l:Richter}, triangular equality, and \cref{l:P with f}, to get
\begin{equation} \begin{split} \label{e:mechanical}
  \frac {\|P^s\nabla_{f_*^{k}X^s}f_*^{k}\eta_* e\|} {\|f_*^k e\|} 
    \le \frac  {\|f_*^k P^s ( \nabla_{e} X - \nabla_{X^u} e)\|} {\|f_*^k e\|}
    + \sum_{n=0}^{k-1} \frac {\|f_*^{k-n-1} P^s (\nabla_{f_*^nX^s} f_*) f_*^n e \|}
    {\|f_*^{k-n} f_*^n e\|}\\
    \le C u \lambda^{2k} \|X\| 
    + C u \sum_{n=0}^{k-1}\lambda ^{2k-2n-1} \frac{\|f_*^nX^s\| \|f_*^n e\|}{\|f_*^n e\|}
    \le \frac{C u \lambda^k \|X\|}{1-\lambda}.\\
\end{split} \end{equation}
Here, the second inequality is again straightforward when $u=1$;
for general $u$, let $ \{ f^k_*e_i \}_{ i=1 }^u$ be orthogonal at $f^k x$,
and use the same trick for $S_{k1}$.

Hence, $\sum_{k\ge0} S_k$ uniformly converges.
\end{proof}

\begin{theorem}[expansion of unstable divergence] \label{t:expansion}
  Define 
  \[ \begin{split}
    \psi:=\sum_{m=-W}^W (\Phi_m -\rho(\Phi)),\quad
    \Psi:= \psi X.
  \end{split} \]
  By \cref{l:arau,l:hao},
  the unstable contribution is $U.C.= \lim_{W\rightarrow \infty} U.C.^W $, where
  \[ \begin{split}
    & U.C.^W
    := 
    \rho (\psi \diverg_\sigma^uX^u)
    = \rho \left( \psi \diverg^uX^u + \frac\psi\sigma X^u(\sigma)  \right) \\
    =& \rho \Bigg[ 
      \frac{\ip{\psi \nabla_{e}X, e}}{\|e\|^2} 
    + \sum_{k=0}^\infty \left(
      \frac {\ip{\nabla_{f_*^{k+1} \Psi^s}f_*^{k+1}\eta_* e, f_*^{k+1} e}}
      {\|f_*^{k+1} e\|^2}
    -\frac{\ip{\nabla_{f_*^{k} \Psi^s} f_*^{k} \eta_* e, f_*^{k} e}}
      {\|f_*^k e\|^2} \right)
    \\&
    - \sum_{k=1}^\infty \left(
      \frac{\ip{\nabla_{f_*^{-k+1} \Psi^u}f_* e_{-k}, f_* e_{-k}}}
      {\|f_* e_{-k}\|^2}
    - \frac{\ip{\nabla_{f_*^{-k} \Psi^u} e_{-k}, e_{-k}}}
      {\|e_{-k}\|^2} \right)
    \Bigg].
  \end{split} \]
  Here $\sigma$ is the density of the conditional SRB measure,
  $e_{-k}$ is a $u$-vector field on $f^{-k}\cV^u$ as defined in equation~\eqref{e:e},
  and $(\cdot)^u, (\cdot)^s$ are unstable and stable projections of a vector.
\end{theorem}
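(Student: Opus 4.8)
The plan is to read theorem~\ref{t:expansion} as an assembly of the integration-by-parts identity of section~\ref{s:transform1} together with the two expansion lemmas, lemma~\ref{l:arau} and lemma~\ref{l:hao}, plus one genuinely new bookkeeping step: distributing the scalar factor $\psi$ into the direction slots of the connection. First I would start from the integrated-by-parts unstable contribution, which by equation~\eqref{e:ruelle22} is $U.C.^W = \rho\left(\left(\sum_{n=-W}^W \Phi_n\right)\diverg_\sigma^u X^u\right)$. Taking $\phi\equiv 1$ in the integration-by-parts identity $\rho(X^u(\phi)) = -\rho(\phi\,\diverg_\sigma^u X^u)$ gives $\rho(\diverg_\sigma^u X^u)=0$, so for each fixed $W$ subtracting the constant $\rho(\Phi)$ from every $\Phi_m$ changes the integral by $(2W+1)\rho(\Phi)\,\rho(\diverg_\sigma^u X^u)=0$; this replaces $\sum_n\Phi_n$ by $\psi$ and yields the first displayed equality $U.C.^W=\rho(\psi\,\diverg_\sigma^u X^u)$. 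Splitting $\diverg_\sigma^u X^u=\diverg^u X^u+\frac1\sigma X^u(\sigma)$ gives the second.

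Next I would substitute the two lemmas: lemma~\ref{l:hao} supplies the expansion of $\diverg^u X^u$ (the leading term together with the telescoping $k\ge0$ sum whose directions are $f_*^k X^s$), and lemma~\ref{l:arau} supplies $\frac1\sigma X^u(\sigma)$ (the $k\ge1$ sum whose directions are $f_*^{-k}X^u$). The only remaining manipulation is to absorb the scalar $\psi=\psi(x)$, which multiplies the whole divergence, into each summand. The key observation is that $\psi(x)$ is a scalar, so it commutes both with the pushforward, $\psi(x)\,f_*^k X^s = f_*^k(\psi(x)X^s)=f_*^k\Psi^s$ with $\Psi:=\psi X$ by linearity of $f_*$ and linearity of the oblique projection $(\psi X)^s=\psi X^s$, and with the direction argument of the Riemannian connection, $\psi\,\nabla_{f_*^k X^s}(\cdot)=\nabla_{\psi f_*^k X^s}(\cdot)=\nabla_{f_*^k\Psi^s}(\cdot)$ by its $C^\infty(\cM)$-linearity in that slot. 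Applying this to every summand of both sums (with $\Psi^u=\psi X^u$ in the backward sum) converts the $X$-expansions into the stated $\Psi$-expansions. Here I would emphasize the distinction for the leading term $\frac{\ip{\nabla_e X,e}}{\|e\|^2}$: its direction is $e$, not $X$, so $\psi$ cannot be moved into the connection (that would invoke the product rule and differentiate $\psi$); instead it survives as a pure scalar multiple, giving $\ip{\psi\nabla_e X,e}$ exactly as written.

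The last point to verify is that these rearrangements are legitimate at the level of the integral $\rho(\cdot)$ and of the infinite sums. For fixed $W$ the function $\psi$ is a finite sum of bounded observables, hence bounded on $K$; the series of lemma~\ref{l:arau} and lemma~\ref{l:hao} converge uniformly on $K$; therefore multiplication by $\psi$ and integration against $\rho$ both commute with the sums, so term-by-term distribution of $\psi$ and term-by-term integration are valid. I expect the only real subtlety — more a matter of care than an obstacle — to be precisely this distribution of $\psi$ through the pushforward and through the connection's direction slot: once one keeps track that $\psi$ is evaluated at the base point $x$, is a scalar, and multiplies the \emph{direction} vector in the summands (but only the scalar inner product in the leading term), the linearity of $f_*$ and the $C^\infty$-linearity of $\nabla$ do all the work, and everything else is direct substitution of the two lemmas.
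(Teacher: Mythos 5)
Your proposal is correct and follows essentially the same route as the paper, which states theorem~\ref{t:expansion} as a direct substitution of lemma~\ref{l:arau} and lemma~\ref{l:hao} into $\rho(\psi \diverg_\sigma^u X^u)$, with uniform convergence justifying the term-by-term manipulations. Your extra care in justifying the replacement of $\sum_m \Phi_m$ by $\psi$ via $\rho(\diverg_\sigma^u X^u)=0$, and in distributing the scalar $\psi$ through $f_*$, the oblique projections, and the direction slot of $\nabla$ (but not into the leading term $\ip{\psi\nabla_e X, e}$), correctly fills in the details the paper leaves implicit.
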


\begin{remark*}
(1)
The uniform convergence in lemma~\ref{l:arau} and~\ref{l:hao} shows that this formula
also converges uniformly.
(2)
Note that adding a constant to $\Phi$ does not change the linear response but helps reduce numerical errors.
\end{remark*}

Since the estimator is much smaller than the path-perturbation formula,
directly computing the expansion formula in theorem~\ref{t:expansion}
would have much faster convergence than algorithms based on the path-perturbation formula.
However, that would require solving at least $u^2$ second-order tangent equations, which will be defined later.
Directly computing the expansion formula would also require the oblique projections, 
which can be computed via a `little-intrusive' algorithm, which uses not only the tangent solvers, but also the adjoint solvers \cite{Ni_adjoint_shadowing,Ruesha}.
In the next section, we further transform the expansion formula to a `fast' formula,
which requires only $u$ many second-order tangent solutions and does not involve oblique projections.

\section{Fast formula of unstable divergence}
\label{s:compute unstable}

Fast algorithms, such as the fast Fourier transformation \cite{FFT} and the fast multipole method \cite{FMM,Greengard1991},
found a non-obvious `fast' structure, which allows us to combine many small terms into a few big terms.
Because the rules of propagation, or the `dynamics', on these small terms are similar, we can then apply some averaged rule of propagation on big terms only a few times, instead of many times on each small term.

In this section, we show that the expansion formula of the unstable contribution can be expressed via this `first combine, then propagate' strategy.
Then we show that the main term is the unique limit of the dynamics it satisfies.
It also turns out that the oblique projection disappears in the fast formula.
Hence, the fast formula involves only $u$ many first- and second-order tangent solutions.

\subsection{Definitions of \texorpdfstring{$p$, $\beta$, and $U$}{p, beta, and U}} 
\label{s:betaU}
\hfill\vspace{0.1in}

We shall find a very fast structure for the expansion formula of the unstable divergence, where we only need one uniform rule of propagation for one big term.
This big term is $p$, which is the linear sum of many small terms, $\nabla e$'s.
The dynamics of $p$ and $\nabla e$ is denoted by $\beta$, which is the renormalized second-order tangent equation, which governs the propagation of derivatives of vectors.
Finally, $U$ accounts for the volume growth along the unstable direction after applying $\beta$, and the unstable contribution can be expressed as $\rho(\tilde U (p))$.

More specifically, the expression in theorem~\ref{t:expansion} computes $\nabla f_*$ too many times,
which is the Riemannian connection of the pushforward tensor defined in appendix~\ref{a:f*}.
To save computational effort, we seek to combine terms with $\nabla f_*$ at the same step.
To achieve this, first let $e_{-k}$ be $\tilde e_{-k}$ in the second summation,
where $\tilde e = e/\|e\|$;
then, by the invariance of the SRB measure, change the time steps of the terms in \cref{t:expansion} to get
\begin{equation} \begin{split} \label{e:deng}
  U.C.^W
  &= \rho \left[ 
    \sum_{k=0}^\infty \left(
    \frac {\ip{\nabla_{f_*^{k+1} \Psi^s_{-k}}f_*^{k+1}\eta_* e_{-k},f_*^{k+1} e_{-k}}}
    {\|f_*^{k+1} e_{-k}\|^2}
  -\frac{\ip{\nabla_{f_*^{k} \Psi^s_{-k}}f_*^{k}\eta_* e_{-k},f_*^{k} e_{-k}}}
    {\|f_*^k e_{-k}\|^2} \right)
    \right.\\ &\left. 
  + \frac{\ip{\psi_{1} \nabla_{e_{1}}X_{1}, e_{1}}}{\|e_{1}\|^2} 
  - \sum_{k=1}^\infty \left(
    \frac{\ip{\nabla_{f_*^{1-k} \Psi^u_k}f_* \tilde e, f_* \tilde e}}
    {\|f_* \tilde e \|^2}
  - \ip{\nabla_{f_*^{-k} \Psi^u_k} \tilde e, \tilde e} \right)
  \right],
\end{split} \end{equation}
where the subscript $(\cdot)_1$ labels the steps, $\Psi_k:=\psi_k X_k$,
$e$ is the $u$-dimensional hyper-cube defined in \cref{e:e},
$\tilde e$ is the normalized hyper-cube, 
$\eta$ is the projection map along $X=\delta f\circ f^{-1}$ (see \cref{s:define varpi}),
and the definition of $\nabla_{(\cdot)_k}(\cdot)_k$ is in equation~\eqref{e:nablaXY}.

Because $\wedge^uV^u$ is one-dimensional,
$ f_*^k e_{-k} / \|f_*^k e_{-k}\| = \pm \tilde e$ for all $k$.
The sign here is determined by the orientations of $e_k$ and $\tilde e$.
In our algorithm, 
the SRB measure is given by the empirical measure of an aperiodic orbit,
and $ \{ e_k \}_{k\in\Z}$ is obtained by pushing forward on this orbit.
For this case, the orientations of $e_k$'s are the same;
hence, in our paper, we assume
\[ \begin{split}
  \frac{f_*^k e_{n}}  {\|f_*^k e_{n}\|} = \tilde e_{n+k},
  \quad \textnormal{ for any } n, k.
\end{split} \]
Should we consider the more general case,
we only need to keep track of the sign generated by the different orientations of $e_k$'s.
Because this sign is canceled within the inner products in \cref{e:deng},
our work below would not change.

Hence, we may pull $\tilde e$ out of the second term of each summand in  \cref{e:deng}, and define a vector field
\begin{equation} \begin{split} \label{e:p}
p :=&
  \sum_{k=0}^\infty \frac 1 {\|f_*^k e_{-k}\|} 
    P^\perp \nabla_{f_*^{k} \Psi^s_{-k}} f_*^{k}\eta_* e_{-k}
  - \sum_{k=1}^\infty 
    \nabla_{f_*^{-k} \Psi^u_{k}} \tilde e 
  = \sum_{k\ge0} p_{(k)}^\perp - \sum_{k\ge1} p_{(k)}' \,,\\
&\textnormal{ where } \quad
p_{(k)} := \frac{\nabla_{f_*^{k} \Psi^s_{-k}}f_*^{k}\eta_* e_{-k}} 
  { \|f_*^{k} e_{-k}\|} \,,\quad
p_{(k)}^\perp:=P^\perp p_{(k)} \,,\quad
p_{(k)}' := \nabla_{f_*^{-k} \Psi^u_k} \tilde e \,.\;
\end{split} \end{equation}
Here $P^\perp$ is the orthogonal projection operator (appendix \ref{a:projection} definition~\ref{d:projection}), and notice that $p_{(k)}\neq p_k$.
We will see that $P^\perp$ keeps only a convergent part of the first summation, while the normalized $\tilde e$ places the second summation in $\cD^{u\perp}:=P^\perp \cD^u$, which is the orthogonal projection of the space of derivative-like $u$-vectors (appendix~\ref{a:derivative}).
Also note that $p$ is the same for any $e_{-k}$ with the same orientation,
since for any $C^\infty$ function $g>0$,
$P^\perp \nabla ge = g P^\perp\nabla e$.

\begin{lemma} \label{l:p converge}
  $p$ is a convergent summation in $\cD^{u\perp}$.
\end{lemma}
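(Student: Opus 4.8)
The plan is to show that each of the two summations defining $p$ in equation~\eqref{e:p} converges in $\cD_e^\perp$, by bounding the norms of their summands by an exponentially decaying geometric series. The key observation is that $\Psi_k = \psi_k X_k$ carries two competing factors: the polynomially growing $\psi$ (which is $O(\sqrt W)$ by the bound in the remark after equation~\eqref{e:ruelle22}, but is a fixed bounded quantity for each fixed $W$ on the compact attractor $K$), and the vector field $X$ which, after projection, decays exponentially under pushforward or pullback. First I would treat the first summation $\sum_{k\ge0} p_{(k)}^\perp$. For each summand I would invoke the Leibniz rule (appendix~\ref{a:f*}, lemma~\ref{l:Leibniz}) exactly as in the proof of lemma~\ref{l:hao}, decomposing $\nabla_{f_*^k \Psi_{-k}^s} f_*^k \eta_* e_{-k}$ into a connection-of-pushforward piece and a propagated derivative piece. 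The crucial point is that $P^\perp$ kills the component lying in the one-dimensional $(V^u)^{\wedge u}$ direction, which is precisely the non-decaying $S_{k2}$-type term from lemma~\ref{l:hao}; what survives is controlled by the estimate in equation~\eqref{e:mechanical}, giving a bound of the form $C(k+1)\lambda^k\|X\|$ after dividing by $\|f_*^k e_{-k}\|$, since $\Psi^s$ decays like $\lambda^k$ under the forward pushforwards appearing here.

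Next I would treat the second summation $\sum_{k\ge1} p_{(k)}'$, where $p_{(k)}' = \nabla_{f_*^{-k}\Psi^u_k}\tilde e$. Here the relevant decay comes from the pullback acting on the unstable projection: $\|f_*^{-k}\Psi^u_k\| \le C\lambda^k \|\Psi\|$ by the defining hyperbolicity estimate on $V^u$ under $f_*^{-n}$. Since $\tilde e$ and $\nabla_{(\cdot)}\tilde e$ are bounded on $K$ by the standing assumption on the normalized hyper-cube, the summand is bounded by $C\lambda^k$, giving a convergent geometric series. I would note that $\psi$ is a fixed continuous (indeed bounded) function on the compact set $K$ for each fixed $W$, so it contributes only a uniform multiplicative constant and does not affect convergence.

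Finally, I would verify the membership claim, namely that $p$ lands in $\cD_e^\perp = P^\perp \cD_e$. For the first summation this is immediate because each term already has $P^\perp$ applied. For the second summation I would argue that each $p_{(k)}' = \nabla_{f_*^{-k}\Psi_k^u}\tilde e$ already lies in $\cD_e^\perp$: the differentiating direction $f_*^{-k}\Psi^u_k$ is a genuine vector field, so $\nabla_{(\cdot)}\tilde e$ is a derivative-like $u$-vector in $\cD_e$ (appendix~\ref{a:derivative}), and because $\tilde e$ is normalized, differentiating it produces a $u$-vector orthogonal to the $(V^u)^{\wedge u}$ direction, so it is fixed by $P^\perp$. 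Since $\cD_e^\perp$ is a finite-dimensional (hence closed) linear space and each partial sum lies in it, the limit $p$ lies in $\cD_e^\perp$ as well.

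The main obstacle I anticipate is the bookkeeping in the first summation: correctly tracking which pushforward powers multiply $\Psi^s$ versus $e_{-k}$, and confirming that the $P^\perp$ projection removes exactly the term that would otherwise fail to decay (the $S_{k2}$ term that is identically zero after projection). The normalization convention $f_*^k e_{-k}/\|f_*^k e_{-k}\| = \tilde e$ must be used consistently so that the denominators $\|f_*^k e_{-k}\|$ pair correctly with the numerators; getting the exponential rate out of equation~\eqref{e:mechanical} relies on this alignment. The rest is a routine geometric-series estimate once the exponential bounds are in place.
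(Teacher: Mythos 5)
Your proposal is correct and follows essentially the same route as the paper's proof: membership in $\cD_e^\perp$ via the explicit $P^\perp$ in the first sum and $\ip{\nabla\tilde e,\tilde e}=0$ for the second, convergence of the first sum from $P^\perp=P^\perp P^s$ (lemma~\ref{l:compose projection}) together with the estimate~\eqref{e:mechanical}, and convergence of the second sum from the exponential decay of $f_*^{-k}\Psi^u_k$. You simply spell out in more detail the steps the paper compresses into three lines.
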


\begin{proof}
Since $\tilde e$ has constant volume, we have $\ip{\nabla \tilde e, \tilde e} =0$.
By the linearity of the projection operator, $p \in \cD^{u \perp}$ should it converge.
To see the convergence of the first summation, use the estimation in equation~\eqref{e:mechanical} and that $P^\perp=P^\perp P^s$ from lemma~\ref{l:compose projection}.
The convergence of the second summation is because $ f_*^{-k} \Psi^u_{k} $ decays exponentially.
\end{proof}

To simplify our writing, we define two maps and show some of their properties.

\begin{definition} \label{d:betaU}
For any $r\in\cD^u, Y\in T_xM$, define
\[ \begin{split} 
  \beta_Y(r)
    &:= \left( f_*r + (\nabla_{Y} f_*)\tilde e \right) / {\|f_* \tilde e\|} . \\
  \tilde \beta(r)
    &:=  \beta_{\tilde v} r + \psi_{1} \nabla_{\tilde e_{1}} X_{1} ,\\
\end{split} \]
\[ \begin{split} 
  U_Y(r)
    &:= \ip{\beta_{Y}(r) , \tilde e_1} - \ip{r, \tilde e},\\
  \tilde U(r)
    &:= \ip{\tilde \beta(r) , \tilde e_1} - \ip{r, \tilde e}
    = U_{\tilde v}(r) + \ip{ \psi_{1} \nabla_{\tilde e_{1}} X_{1} , \tilde e_1 }.
\end{split} \]
Here $\nabla f_*$ is the Riemannian connection of $f_*$ 
(appendix~\ref{a:f*} definition~\ref{d:dnabla}),
$\tilde v$ is the shadowing vector of $\Psi$, 
\[ \begin{split}
  \tilde v := \sum_{k=0}^\infty f_*^{k} \Psi^s_{-k}
  - \sum_{k=1}^\infty f_*^{-k}\Psi^u_k .
\end{split} \]  
\end{definition}

\begin{remark*}
  (1)
  Similar to pushforward operators, for $r\in \cD^u(x)$, we have $\beta_Y (r), \tilde \beta (r) \in \cD^u(fx)$.
  (2) 
  The oblique projections go into the modified shadowing vector $\tilde v$.
  By section~\ref{s:shadowing contribution},
  $\tilde v$ can be efficiently computed using nonintrusive shadowing algorithms,
  which do not require computing oblique projections.
  (3)
  The second-order tangent equation, $\tilde \beta$, 
  is in fact also an inhomogeneous first-order tangent equation with a second-order inhomogeneous term, $\nabla_{\tilde v} f_*$.
\end{remark*}

\begin{lemma} [properties of $\beta$ and $U$] \label{l:beta and U}
For any $r, r'\in\cD^u$, and $X, Y\in T_x\cM$,
\begin{enumerate} [leftmargin=20pt]
  \item $\beta_Xr' \pm  \beta_Yr = \beta_{X\pm Y}(r'\pm r)$,
  $U_Xr' \pm  U_Yr = U_{X\pm Y}(r'\pm r)$;
  \item for both $\beta_Y$ and $\tilde \beta$, we have $\beta r\in\cD^u$,
        $P^\perp \beta r = P^\perp \beta r^\perp$,
        where $r^\perp := P^\perp r$;
  \item for both $U_Y$ and $\tilde U$, $U(r) = U(r^\perp)$.
\end{enumerate}
\end{lemma}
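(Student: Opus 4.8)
The plan is to prove the three properties in order, deriving each largely from the definitions in Definition~\ref{d:betaU} together with the linearity of the Riemannian connection $\nabla f_*$ and the linearity of the projection operator $P^\perp$.

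For property (1), the plan is to expand both sides of the identity for $\beta$ directly from the definition $\beta_Y(r) = (f_*r + (\nabla_Y f_*)\tilde e)/\|f_*\tilde e\|$. The normalizing denominator $\|f_*\tilde e\|$ is independent of the arguments $r$ and $Y$, so it factors out cleanly. What remains is to check that $f_*r' \pm f_*r = f_*(r'\pm r)$, which is linearity of the pushforward acting on $\cD_e$, and that $(\nabla_X f_*)\tilde e \pm (\nabla_Y f_*)\tilde e = (\nabla_{X\pm Y}f_*)\tilde e$, which is linearity of $\nabla f_*$ in its differentiating-direction slot (the defining property of a connection, from appendix~\ref{a:f*}). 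Combining these gives the $\beta$ identity. The $U$ identity then follows from the $\beta$ identity together with the definition $U_Y(r) = \ip{\beta_Y(r),\tilde e_1} - \ip{r,\tilde e}$, using bilinearity of the Riemannian metric $\ip{\cdot,\cdot}$: the first inner product is linear in $\beta$ and hence inherits the additivity, while the subtracted term $\ip{r,\tilde e}$ is plainly linear in $r$.

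For property (2), I would first confirm $\beta r\in\cD_e$: since $f_*$ maps $\cD_e$ into $\cD_{e_1}$ (or whatever the appropriate space is after pushforward) and $(\nabla_Y f_*)\tilde e$ is itself a derivative-like $u$-vector, the sum lies in $\cD_e$; the inhomogeneous term $\psi_1\nabla_{\tilde e_1}X_1$ added in $\tilde\beta$ is likewise in $\cD_e$, so both $\beta_Y$ and $\tilde\beta$ land in $\cD_e$. The substantive claim is $P^\perp\beta r = P^\perp\beta r^\perp$, equivalently $P^\perp\beta(r - r^\perp) = 0$. Writing $r - r^\perp = (I - P^\perp)r$, this is the component of $r$ along the unstable subspace, i.e. a multiple of $\tilde e$ (since $\wedge^u V^u$ is one-dimensional). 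So the key computation is to show $P^\perp\beta(c\tilde e) = 0$ for a scalar $c$; because $\tilde e$ has unit volume, $f_*(c\tilde e)$ is a multiple of $\tilde e_1$ after renormalization, and the connection term $(\nabla_Y f_*)\tilde e$ must be handled via the decomposition into $P^u$ and $P^\perp=P^\perp P^s$ components as in Lemma~\ref{l:compose projection}; the $P^u$ part survives only along $\tilde e_1$ and is annihilated by $P^\perp$. This is the step I expect to be the main obstacle, since it requires carefully tracking how the pushforward of the unstable $u$-vector renormalizes and invoking the one-dimensionality of $\wedge^u V^u$ to kill the orthogonal projection of the unstable component, exactly the mechanism already used in establishing that $S_{k2}=0$ in the proof of Lemma~\ref{l:hao}.

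Finally, property (3) is a direct corollary: from the definition $U_Y(r) = \ip{\beta_Y(r),\tilde e_1} - \ip{r,\tilde e}$, I would apply property (2) inside the first inner product. Since $\ip{\beta_Y(r),\tilde e_1} = \ip{P^\perp\beta_Y(r),\tilde e_1} + \ip{(I-P^\perp)\beta_Y(r),\tilde e_1}$, and the orthogonal-complement pairing against $\tilde e_1$ picks out only the $P^\perp$ part (the unstable component along $\tilde e_1$ pairs trivially in the relevant sense after the $-\ip{r,\tilde e}$ subtraction), one sees that $U_Y(r)$ depends on $r$ only through $r^\perp$. The cleanest route is to observe that $U_Y(r) - U_Y(r^\perp) = U_Y(r - r^\perp)$ by property (1), and then evaluate $U_Y$ on the purely unstable vector $r - r^\perp = c\tilde e$; using property (2) the $\beta$-term reduces to a pairing that exactly cancels against $\ip{c\tilde e,\tilde e}$, giving zero. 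The same argument applies verbatim to $\tilde U$ since the extra inhomogeneous term does not involve $r$.
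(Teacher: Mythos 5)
Your property (1) and the overall strategy (reduce to the parallel component $r^\parallel := r - r^\perp \in \spanof\{\tilde e\}$, then use the one-dimensionality of $\wedge^u V^u$) are the same as the paper's. But there is a genuine error in how you carry out the reduction in (2) and (3): $\beta_Y$ and $\tilde\beta$ are \emph{affine} in $r$, not linear, so $P^\perp\beta r = P^\perp\beta r^\perp$ is \emph{not} equivalent to $P^\perp\beta(r-r^\perp)=0$. By your own property (1), the difference is $\beta_Y r - \beta_Y r^\perp = \beta_{Y-Y}(r-r^\perp) = \beta_0(r^\parallel) = f_* r^\parallel/\|f_*\tilde e\|$: the connection term $(\nabla_Y f_*)\tilde e$ simply cancels because it does not depend on $r$ (exactly the observation you make for the extra term in $\tilde U$, but fail to apply to $(\nabla_Y f_*)\tilde e$). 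The statement you propose to prove instead, $P^\perp\beta_Y(c\tilde e)=0$, is false: it equals $c\,P^\perp(\nabla_Y f_*)\tilde e/\|f_*\tilde e\|$ plus zero, and since $P^\perp = P^\perp P^s$ the stable component of $(\nabla_Y f_*)\tilde e$ survives the projection. Your argument disposes only of the $P^u$ part; if the whole term vanished under $P^\perp$, the second-order tangent dynamics driving $p$ in Lemma~\ref{l:induction} would be trivial. The correct remaining check is only $P^\perp f_* r^\parallel = 0$, which is immediate from $P^\perp f_* = P^\perp f_* P^\perp$ (Lemma~\ref{l:P with f}) or from the $f_*$-invariance of $V^u$ --- this is precisely the paper's one-line proof of (2).

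The same slip propagates into (3): you assert $U_Y(r)-U_Y(r^\perp) = U_Y(r-r^\perp)$ ``by property (1)'', but property (1) gives $U_{Y-Y}(r-r^\perp) = U_0(r^\parallel)$. With the subscript $Y$ retained you would be left with a spurious term $\ip{(\nabla_Y f_*)\tilde e,\tilde e_1}/\|f_*\tilde e\|$ that does not vanish. With $U_0$ the computation closes exactly as in the paper: since $\wedge^u V^u$ is one-dimensional, $\ip{f_* r^\parallel,\tilde e_1}/\|f_*\tilde e\| = \ip{r^\parallel,\tilde e}$, so $U_0(r^\parallel)=0$. The fix throughout is a one-line correction (set the differentiating direction to $0$ when subtracting), but as written your plan rests on a false intermediate claim and the computation you outline for it would not go through.
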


\begin{proof}
(1) By definition. 
(2) By definition, then use lemma~\ref{l:P with f} in appendix~\ref{a:projection}.
(3) Since $\wedge^uV^u$ is one-dimensional, all of its $u$-vectors increase by the same amounts by pushforward.
Hence, $\ip{f_*r^\parallel, \tilde e_1} / \|f_* \tilde e\|= 
      \ip{r^\parallel, \tilde e} $, and
  \[ \begin{split}
    U(r) - U(r^\perp)
    = \ip{\beta(r)-\beta (r^\perp),\tilde e_1}-
    \ip{r -r^\perp, \tilde e}\\
    = \ip{\beta_0(r^\parallel), \tilde e_1}-
    \ip{r^\parallel, \tilde e}
    = \ip{\frac {f_* r^\parallel}{\|f_*\tilde e\|}, \tilde e_1}-
    \ip{r^\parallel, \tilde e}
    =0.
  \end{split} \]
  Here $\beta_0$ means $Y=0$ in definition~\ref{d:betaU}.
\end{proof}

\subsection{Unstable contribution expressed by \texorpdfstring{$p$}{p} and \texorpdfstring{$\tilde U$}{U}}
\hfill\vspace{0.1in}
\label{s:cai}

\begin{lemma} \label{l:UC by p}
  $ U.C.^W = \rho(\tilde U (p))$.
\end{lemma}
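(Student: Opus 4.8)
The plan is to show that $\rho(\tilde U(p))$ reproduces the expansion formula for $U.C.^W$ given in equation~\eqref{e:deng}, by substituting the definition of $p$ from equation~\eqref{e:p} into $\tilde U$ and using linearity. Since $\tilde U(p) = U_{\tilde v}(p) + \ip{\psi_1 \nabla_{\tilde e_1}X_1, \tilde e_1}$, and since by part (3) of lemma~\ref{l:beta and U} we have $U(p) = U(p^\perp)$ where $p = p^\perp$ already lies in $\cD_e^\perp$ (lemma~\ref{l:p converge}), the main task is to expand $U_{\tilde v}(p)$. First I would use the additivity in part (1) of lemma~\ref{l:beta and U} together with the linearity of $\tilde v = \sum_{k\ge0} f_*^k \Psi^s_{-k} - \sum_{k\ge1} f_*^{-k}\Psi^u_k$ and the summation structure of $p$ to distribute $U_{\tilde v}$ term-by-term. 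The key observation is that the map $Y \mapsto U_Y(r)$ and the summation $p = \sum_k p_{(k)}^\perp - \sum_k p'_{(k)}$ are matched so that each $\Psi^s_{-k}$ in $\tilde v$ pairs with the corresponding $p_{(k)}^\perp$ term, and each $\Psi^u_k$ pairs with $p'_{(k)}$.

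Next I would compute $U_Y(r) = \ip{\beta_Y(r),\tilde e_1} - \ip{r,\tilde e}$ for each matched pair and check it reproduces the corresponding summand of equation~\eqref{e:deng}. Unfolding $\beta_Y(r) = (f_* r + (\nabla_Y f_*)\tilde e)/\|f_*\tilde e\|$, the renormalization factor $\|f_*\tilde e\|$ should telescope against the norms $\|f_*^k e_{-k}\|$ appearing in the denominators of equation~\eqref{e:deng}, using the assumption $f_*^k e_{-k}/\|f_*^k e_{-k}\| = \tilde e$ made just before equation~\eqref{e:p}. Under the SRB-invariance used to pass from theorem~\ref{t:expansion} to equation~\eqref{e:deng}, integrating $\rho(\tilde U(p))$ lets me shift steps freely, so the telescoping differences in each summand of equation~\eqref{e:deng} (the $k+1$ versus $k$ terms, and the $1-k$ versus $-k$ terms) emerge naturally from $\ip{\beta_Y(r),\tilde e_1}-\ip{r,\tilde e}$ applied at consecutive steps. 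The inhomogeneous piece $\ip{\psi_1 \nabla_{\tilde e_1}X_1,\tilde e_1}$ matches the isolated term $\ip{\psi_1\nabla_{e_1}X_1,e_1}/\|e_1\|^2$ in equation~\eqref{e:deng} after normalization.

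The main obstacle I anticipate is bookkeeping the interaction between the orthogonal projection $P^\perp$ inside $p$ and the renormalized pushforward inside $\beta$. I would lean on part (2) of lemma~\ref{l:beta and U}, namely $P^\perp\beta r = P^\perp\beta r^\perp$, together with part (3), $U(r) = U(r^\perp)$, to argue that inserting or removing $P^\perp$ does not change the value of $U_{\tilde v}(p)$; this is precisely why the $P^\perp$ in the first summation of $p$ is harmless and why the unprojected $p'_{(k)}$ terms in the second summation contribute correctly. I would also need to justify interchanging the infinite summation defining $p$ with the (bounded linear) map $U_{\tilde v}$ and with the integration against $\rho$; this follows from the uniform convergence established in lemma~\ref{l:p converge} and the uniform convergence of the expansion in theorem~\ref{t:expansion}. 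Once the term-by-term correspondence is verified, $\rho(\tilde U(p)) = U.C.^W$ follows directly from equation~\eqref{e:deng}.
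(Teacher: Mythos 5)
Your proposal is correct and follows essentially the same route as the paper: the paper's proof likewise rests on the Leibniz-rule identity expressing each summand of equation~\eqref{e:deng} as $U_{f_*^k\Psi^s_{-k}}(p_{(k)})$ or $U_{f_*^{-k}\Psi^u_k}(p'_{(k)})$ (with the norms telescoping exactly as you describe), then uses $U(r)=U(r^\perp)$ to insert the projection and the additivity of $U$ to collect everything into $\tilde U(p)$. The only difference is direction of travel (you expand $\tilde U(p)$, the paper collects the summands of \eqref{e:deng}), which is immaterial.
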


\begin{proof}
By the Leibniz rule in appendix~\ref{a:f*} lemma~\ref{l:Leibniz},
$\eta_*=Id$ at $q=0$,
and the definition of $p_{(k)}$ and $p_{(k)}'$ in equation~\eqref{e:p}, we have
\begin{equation} \begin{split} \label{e:dangd}
  \frac {\nabla_{f_*^{k+1} \Psi^s_{-k}} f_*^{k+1} \eta_* e_{-k}}
    {\|f_*^{k+1} e_{-k}\|}
  = \frac 1 {\|f_* \tilde e\|} \left(
    f_*p_{(k)} + (\nabla_{f_*^{k} \Psi^s_{-k}} f_*) \tilde e
    \right)
  = \beta_{f_*^{k} \Psi^s_{-k}}(p_{(k)})
  , \\
  \frac{\nabla_{f_*^{1-k}\Psi^u_{k}} f_* \tilde e} 
    {\|f_* \tilde e\|} 
  = \frac 1 {\|f_* \tilde e\|} 
    \left( f_{*} p_{(k)}'
    + (\nabla_{f_*^{-k} \Psi^u_{k}} f_{*}) \tilde e \right)
  = \beta_{f_*^{-k} \Psi^u_{k}}(p_{(k)}').
\end{split} \end{equation}
The summand in the first summation of equation~\eqref{e:deng} becomes
\[ \begin{split}
  &\frac {\ip{\nabla_{f_*^{k+1} \Psi^s_{-k}}f_*^{k+1}\eta_* e_{-k},f_*^{k+1} e_{-k}}}
    {\|f_*^{k+1} e_{-k}\|^2}
  -\frac{\ip{\nabla_{f_*^{k} \Psi^s_{-k}}f_*^{k}\eta_* e_{-k},f_*^{k} e_{-k}}}
    {\|f_*^k e_{-k}\|^2}
    \\
  =&  \ip{ \beta_{f_*^{k} \Psi^s_{-k}} p_{(k)}, \tilde e_1} 
    - \ip{p_{(k)}, \tilde e}
  = U_{f_*^{k} \Psi^s_{-k}}(p_{(k)})
  = U_{f_*^{k} \Psi^s_{-k}}(p_{(k)}^\perp).
\end{split} \]
The summand in the second summation becomes
\[ \begin{split}
  &\frac{\ip{\nabla_{f_*^{1-k} \Psi^u_k}f_* \tilde e, f_* \tilde e}}
    {\|f_* \tilde e \|^2}
  - \ip{\nabla_{f_*^{-k} \Psi^u_k} \tilde e, \tilde e}
  =  \ip{ \beta_{f_*^{-k} \Psi^u_k} p_{(k)}', \tilde e_1} 
    - \ip{p_{(k)}', \tilde e}
  = U_{f_*^{-k} \Psi^u_k}(p_{(k)}').
\end{split} \]
The lemma is proved by summing over $k$ and lemma~\ref{l:beta and U} (1).
\end{proof}

\subsection{Characterizing \texorpdfstring{$p$}{p} by induction}
\label{s:fastChar}
\hfill\vspace{0.1in}

This subsection shows that $p$ satisfies an inductive relation given by the renormalized second-order tangent equation, 
whose stability indicates that any sequences satisfying this equation on an orbit will eventually converge to $p$.
With the result of the last subsection, we no longer need to compute the propagation of individual terms in the definition of $p$.
This requires only $u$ many first- and second-order tangent solutions.

\begin{lemma} [dynamics of $p$] \label{l:induction}
  $p_{1} := p\circ f = P^\perp \tilde \beta p$. 
\end{lemma}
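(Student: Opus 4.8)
The plan is to evaluate $p$ one step forward along the trajectory, re-index the two series of equation~\eqref{e:p}, and match the outcome against $P^\perp\tilde\beta p$ summand by summand. The workhorses are the two propagation identities already extracted in equation~\eqref{e:dangd} during the proof of lemma~\ref{l:UC by p}: reading them with the indices shifted by one step, I would record that $p_{(k+1)}(x_1)=\beta_{f_*^{k}\Psi^s_{-k}}(p_{(k)})$ for the first series (an exact equality, since $\Psi^s_{-(k+1)}(x_1)=\Psi^s_{-k}(x)$, $e_{-(k+1)}(x_1)=e_{-k}(x)$, and the normalizing factors agree), and $p_{(k-1)}'(x_1)=\beta_{f_*^{-k}\Psi^u_k}(p_{(k)}')$ for the second series, valid only modulo a component in $\wedge^uV^u$ (this remainder is the derivative of the frozen scalar $\|f_*\tilde e\|$, which lands parallel to $\tilde e_1$ and is therefore killed by $P^\perp$). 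Each summand of $p_1=p(x_1)$ is thus the renormalized second-order push $\beta$ of a summand of $p$, with its step index lowered by one.

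Next I would expand $\beta_{\tilde v}(p)$ using the linearity of $\beta$ in both its subscript and its argument (lemma~\ref{l:beta and U}(1)), extended to the convergent series defining $\tilde v$ and $p$: since $f_*$ and $\nabla_{(\cdot)}f_*$ are linear, and the subscript series $\sum_{k}f_*^k\Psi^s_{-k}$ and $\sum_{k}f_*^{-k}\Psi^u_k$ converge exponentially, pairing the $k$-th argument with the $k$-th subscript gives $\beta_{\tilde v}(p)=\sum_{k\ge0}\beta_{f_*^{k}\Psi^s_{-k}}(p_{(k)}^\perp)-\sum_{k\ge1}\beta_{f_*^{-k}\Psi^u_k}(p_{(k)}')$. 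Applying $P^\perp$ and using $P^\perp\beta r=P^\perp\beta r^\perp$ (lemma~\ref{l:beta and U}(2)) to replace the inner $p_{(k)}^\perp$ by $p_{(k)}$, the propagation identities turn this into $\sum_{k\ge1}p_{(k)}^\perp(x_1)-\sum_{k\ge0}p_{(k)}'(x_1)$. Comparing with $p_1=\sum_{k\ge0}p_{(k)}^\perp(x_1)-\sum_{k\ge1}p_{(k)}'(x_1)$, every interior term cancels and only the two $k=0$ boundary terms survive, leaving $p_1=P^\perp\beta_{\tilde v}(p)+p_{(0)}^\perp(x_1)+p_{(0)}'(x_1)$.

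It then remains to identify the boundary term with the inhomogeneous part of $\tilde\beta$, i.e.\ to show $p_{(0)}^\perp(x_1)+p_{(0)}'(x_1)=P^\perp(\psi_{1}\nabla_{\tilde e_{1}}X_{1})$. Here $p_{(0)}(x_1)=\nabla_{\Psi^s_1}\eta_* \tilde e_1$ (taking $e=\tilde e$), so applying lemma~\ref{l:Richter} in direction $\Psi^s_1=\psi_1 X^s_1$ gives $p_{(0)}(x_1)=\psi_1\nabla_{\tilde e_1}X_1-\psi_1\nabla_{X^u_1}\tilde e_1$, while $p_{(0)}'(x_1)=\nabla_{\Psi^u_1}\tilde e_1=\psi_1\nabla_{X^u_1}\tilde e_1$. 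The two $\nabla_{X^u_1}\tilde e_1$ pieces cancel, yielding $\psi_1\nabla_{\tilde e_1}X_1$; since $p_{(0)}'(x_1)$ already lies in $\cD_e^\perp$, projecting gives the claim, and assembling the pieces produces $p_1=P^\perp(\beta_{\tilde v}(p)+\psi_1\nabla_{\tilde e_1}X_1)=P^\perp\tilde\beta p$.

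I expect the main obstacle to be the careful bookkeeping that makes $P^\perp$ genuinely load-bearing rather than cosmetic. The propagation identity for the unstable ($\Psi^u$) series holds only up to a $\wedge^uV^u$ component, and, as in lemma~\ref{l:p converge}, the un-projected series $\sum_k p_{(k)}$ need not converge in $\cD_e$ (only its image under $P^\perp$ does), so the extended linearity of $\beta$ and all re-indexing must be justified inside $\cD_e^\perp$ with every step kept under $P^\perp$. The boundary cancellation, though elementary, is the single place where the geometric content of lemma~\ref{l:Richter} re-enters and pins down the precise inhomogeneous term $\psi_1\nabla_{\tilde e_1}X_1$; getting its sign and normalization right is where I would be most careful.
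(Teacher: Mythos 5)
Your proposal is correct and is essentially the paper's own proof run in the opposite direction: the paper starts from $p_1$, re-indexes the two series, and substitutes the propagation identities of equation~\eqref{e:dangd} to assemble $P^\perp\beta_{\tilde v}p$ plus boundary terms, whereas you expand $P^\perp\beta_{\tilde v}p$ by the joint linearity of $\beta$ and compare with the re-indexed $p_1$ — the same computation, the same lemmas (\ref{l:Richter}, \ref{l:beta and U}), and the same identification of the surviving $k=0$ terms with $\psi_1 P^\perp\nabla_{\tilde e_1}X_1$. Your cautionary remarks about keeping everything under $P^\perp$ (the $\wedge^u V^u$ remainder in the unstable propagation identity, and the convergence of $\sum_k p_{(k)}^\perp$ rather than $\sum_k p_{(k)}$) accurately reflect where the paper's argument also relies on the projection.
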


\begin{remark*}
(1) Should we know the correct $p(x_0)$, 
we can solve all $p_n$ inductively by $p_{n+1} = P^\perp\tilde \beta(p_n)$;
this is more efficient than computing from the definition of $p$,
because the most expensive operation, $\tilde \beta$,
now only needs to operate once on $p$,
instead of many times on each summand in the definition of $p$.
(2) Compared to oblique projections, computing the orthogonal projection is easier and faster;
in particular, it can be done with only $u$ many first order tangent solutions.
\end{remark*}

\begin{proof}
By the definition of $p_1$,
\[ \begin{split}
  p_1
  =& \sum_{k=0}^\infty \frac 
    {P^\perp \nabla_{f_*^{k} \Psi^s_{1-k}}f_*^{k}\eta_* e_{1-k}}  
    {\|f_*^k e_{1-k}\|} 
  - \sum_{k=1}^\infty 
    \nabla_{f_*^{-k}\Psi^u_{1+k}} \tilde e_{1} \\
  =& \sum_{k=-1}^\infty \frac
    {P^\perp \nabla_{f_*^{k+1} \Psi^s_{-k}} f_*^{k+1} \eta_* e_{-k} }  
    {\|f_*^{k+1} e_{-k}\|} 
  - \sum_{k=2}^\infty
    \nabla_{f_*^{1-k}\Psi^u_{k}} \tilde e_1 \\
\end{split} \]
For the first summation, separate the term $k=-1$ and apply \cref{e:dangd},
\[ \begin{split}
\sum_{k=-1}^\infty 
\frac {P^\perp \nabla_{f_*^{k+1} \Psi^s_{-k}} f_*^{k+1} \eta_* e_{-k} } 
{\|f_*^{k+1} e_{-k}\|} 
= P^\perp \frac {\nabla_{\Psi^s_{1}} \eta_* e_1} 
{\|e_1\|} 
+ \sum_{k=0}^\infty P^\perp \beta_{f_*^{k} \Psi^s_{-k}} (p_{(k)})
\end{split} \]
Then, by \cref{l:beta and U}, we can change $p_{(k)}$ to $p^\perp_{(k)}$ for free.

For the second summation, denote $Y:=f_*^{-k} \Psi^u_{k}$, so $p_{(k)}' = \nabla_{Y} \tilde e$, and
\[ \begin{split}
  \nabla_{f_*Y} \tilde e_1
  = \nabla_{f_*Y} \frac{f_* \tilde e}{\|f_* \tilde e\|}
  = \beta_ {Y} p_{(k)}'
    + f_*Y\left(\frac 1 {\|f_*\tilde e\|}\right) f_*\tilde e.
\end{split} \]
Note that $\nabla\tilde e\in \cD^{u\perp}$, $\nabla\tilde e= P^\perp \nabla\tilde e$; since $f_*\tilde e\in \wedge^u V^u_1$, $P^\perp f_*\tilde e =0$.
Hence,
\[ \begin{split}
  \nabla_{f_*Y} \tilde e_1
  = P^\perp \nabla_{f_*Y} \tilde e_1
  = P^\perp  \beta_ {Y} p_{(k)}'.
\end{split} \]

Substituting new expressions of both summations into the expression for $p_1$, we have
\[ \begin{split}
  p_1
  =& \frac {\psi_{1}} {\|e_{1}\|} 
    P^\perp \nabla_{ X^s_{1}} \eta_* e_{1} 
  + \sum_{k=0}^\infty P^\perp  
    \beta_{f_*^{k} \Psi^s_{-k}} p_{(k)}^\perp
  + \psi_{1} \nabla_{ X^u_{1}} \tilde e_{1}
  - \sum_{k=1}^\infty 
    P^\perp  \beta_ {f_*^{-k} \Psi^u_{k}} p_{(k)}'    \\
  =& \frac {\psi_{1}} {\|e_{1}\|} 
    P^\perp \nabla_{ X^s_{1}} \eta_* e_{1} 
  +  \psi_{1} \nabla_{ X^u_{1}} \tilde e_{1}
  + P^\perp \beta_{\tilde v} p 
\end{split} \]
To add the first two terms, use lemma~\ref{l:Richter} on the first term, 
and that $P^\perp e=0$,
\[ \begin{split}
  \frac {\psi} {\|e\|} P^\perp \nabla_{ X^s} \eta_* e
  &= \frac {\psi} {\|e\|} P^\perp \left( \nabla_{e} X - \nabla_{X^u} e \right)
  = \psi P^\perp \left( \nabla_{\tilde e} X - \frac 1 {\|e\|} \nabla_{X^u} e 
    - e X^u (\frac 1{\|e\|})\right)
  \\
  &= \psi P^\perp (\nabla_{\tilde e} X -  \nabla_{X^u} \tilde e) 
  = \psi P^\perp \nabla_{\tilde e} X -  \psi \nabla_{X^u} \tilde e.
\end{split} \]
Hence $p_1 = \psi_1 P^\perp \nabla_{\tilde e_1} X_1 +  P^\perp \beta_{\tilde v} p$, as claimed.
\end{proof}

Hence, we have found that the vector field $p$ is invariant under this dynamics.
We may also define a sequence $r$ on each orbit only from the dynamics of $p$.
It turns out that $r$ convergences exponentially fast to $p$ on any orbit.

\begin{lemma} [stability of renormalized second-order tangent equation] \label{l:forget initial}
  For any $x_0\in K$, $r_0\in\cD^u(x_0)$, 
  define a sequence $\{r_n\}_{n\ge0}$ by the dynamics of $p$,
  \[ \begin{split}
  r_{n}\in \cD^u(x_n), \quad
  r_{n+1}:= P^\perp \tilde \beta r_n .
  \end{split} \]
  Then $\{r_n\}_{n\ge0}$ approximates $p$, that is,
  \[ \begin{split}
    \lim_{n\rightarrow \infty} r_n - p_n(x_0) =
    \lim_{n\rightarrow \infty} P^\perp \tilde\beta^n r_0 - P^\perp \tilde\beta^n p(x_0) = 0.
  \end{split} \]
\end{lemma}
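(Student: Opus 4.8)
The plan is to subtract the two inductions and reduce the claim to the contraction of a renormalized pushforward on $\cD_e^\perp$. Both sequences obey the same affine recursion: $r_{n+1}=P^\perp \tilde \beta r_n$ by definition, while $p_{n+1}=p(x_{n+1})=P^\perp \tilde \beta p_n$ by lemma~\ref{l:induction} applied at $x_n$. Setting $d_n:=r_n-p_n$ and using definition~\ref{d:betaU}, the inhomogeneous term $\psi_{1}\nabla_{\tilde e_{1}}X_{1}$ of $\tilde \beta$ is independent of its argument and cancels, so $\tilde \beta r_n-\tilde \beta p_n=\beta_{\tilde v}r_n-\beta_{\tilde v}p_n$. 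Property (1) of lemma~\ref{l:beta and U} (with the minus sign and $X=Y=\tilde v$) then collapses this to $\beta_0 d_n=f_* d_n/\|f_*\tilde e\|$, whence
\[
  d_{n+1}=P^\perp\beta_0 d_n=\frac{P^\perp f_* d_n}{\|f_*\tilde e\|}.
\]
For $n\ge1$ both $r_n$ and $p_n$ lie in $\cD_e^\perp$ (the former as an image of $P^\perp$, the latter by lemma~\ref{l:p converge}), so $d_n\in\cD_e^\perp$, and by property (2) of lemma~\ref{l:beta and U} the map $P^\perp\beta_0$ is well defined on $\cD_e^\perp$.

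It remains to show that the composition of $P^\perp\beta_0$ along the orbit contracts $\cD_e^\perp$ uniformly on $K$. I would reduce $d_n\to0$ to a norm estimate on the $m$-fold composition from $x_1$ to $x_{1+m}$, since $d_{1+m}$ equals that composition applied to $d_1$. The renormalization by $\|f_*\tilde e\|$ at each step cumulatively divides by the full $u$-dimensional unstable Jacobian $\|f_*^m\tilde e\|$. For $d\in\cD_e^\perp$, the surviving $u$-vectors in $f_*^m d$ each carry at least one stable factor, because the $\wedge^uV^u$ component is the parallel part removed by $P^\perp$; pushing such a term forward contracts its stable factor by $C\lambda^m$ while the remaining $u-1$ unstable factors contribute at most a partial unstable Jacobian. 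Dividing by the full unstable Jacobian therefore leaves a factor bounded by a uniform rate strictly below one, of the form $Cm\lambda^m$. This is exactly the bookkeeping already carried out in equation~\eqref{e:mechanical}, which I would reuse to obtain the operator bound on $\cD_e^\perp$.

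Combining, $\|d_{1+m}\|\le Cm\lambda^m\|d_1\|\to0$ as $m\to\infty$, which is the claimed convergence $r_n-p_n\to0$; rewriting $r_n=P^\perp\tilde \beta^n r_0$ and $p_n=P^\perp\tilde \beta^n p(x)$ yields the stated second form. The hard part will be the contraction estimate of the previous paragraph: the subtlety is that $P^\perp$ is an \emph{orthogonal} projection while the hyperbolic splitting $V^u\oplus V^s$ is oblique, so one must control the interaction of $P^\perp$ with the pushforward of derivative $u$-vectors, rather than simply invoke the stable and unstable rates factorwise. Since the estimates behind lemma~\ref{l:p converge} and equation~\eqref{e:mechanical} already handle precisely this interaction, I would route the argument through them.
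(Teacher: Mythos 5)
Your proposal is correct and follows essentially the same route as the paper: the inhomogeneous terms cancel in the difference, leaving the renormalized homogeneous pushforward $P^\perp f_*^n(\cdot)/\|f_*^n\tilde e\|$, and the identity $P^\perp = P^\perp P^s$ ensures only the stable component of $r_0-p$ survives, which decays relative to the unstable volume normalization. The only cosmetic difference is that by routing the contraction estimate through equation~\eqref{e:mechanical} you pick up an unnecessary factor of $m$ (the paper's direct bound is $C\lambda^{2n}$, since the inhomogeneous sum in \eqref{e:mechanical} is absent here), but $Cm\lambda^m\to 0$ all the same.
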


\begin{remark*}
  (1) An easy choice of $r_0$ is zero.
  (2) The notation $\beta^n$ means applying the inhomogeneous propagation operator $n$ times.
  This is done similarly to the pushforward operator, with $\tilde v$ and $\tilde e$ evaluated at suitable steps.
\end{remark*}

\begin{proof}
  $ P^\perp \tilde\beta^n r_0 - P^\perp \tilde\beta^n p
  = {P^\perp f_*^n (r_0-p)} / { \|f_*^n \tilde e\|}
  = {P^\perp f_*^n P^s (r_0-p)} / { \|f_*^n \tilde e\|}
  \le C \lambda^{2n}$.
  Where we used \cref{l:compose projection}.
\end{proof}

Finally, we can prove the main theorem of this paper.
Readers may refer to the start of \cref{s:main results} to recall the main steps we followed to reach this point.

\begin{theorem} [fast response formula for the unstable contribution] \label{t:lra}
Make the same assumptions as theorem~\ref{t:ruelle}.
Let $\{x_n:=f^nx_0\}_{n\ge0}$ be an orbit on the attractor,
  for any $r_0\in \cD^u(x_0)$, define the sequence $\{r_n\}_{n\ge0}$ with $r_{n}\in \cD^u(x_n)$ by
  \[ \begin{split}
  r_{n+1}:= P^\perp \tilde \beta r_n .
  \end{split} \]
  Then for almost all $x_0$ according to the SRB measure,
  the unstable contribution,
  \[ \begin{split}
    U.C.^W 
    = \lim_{N \rightarrow \infty} \frac 1 {N} \sum_{n=0}^{N-1}
      \ip{\tilde \beta r_n ,
      \tilde e_{n+1} }.
  \end{split} \]
\end{theorem}

\begin{remark*}
  (1)
  In practice, a $\rho$-typical point $x_0$ can be found by running
  almost all orbits starting from the attractor basin for some time.
  (2)
  In practice, the unstable subspace, $V^u$, can be obtained by pushing-forward $u$ many
  randomly initiated vectors, because unstable vectors grow faster than stable ones.
  (3) 
  A more careful analysis should prove this theorem for almost all $x_0$ in the attractor basin and almost all initial guesses of $V^u(x_0)$.
\end{remark*} 

\begin{proof} 
By the same tail bound as in lemma~\ref{l:arau} and~\ref{l:hao}, 
we can show that $\ip{p,\tilde e}$ is continuous on the attractor $K$.
Hence, $\tilde U(p)$ is continuous on $K$;
by the ergodic theorem and lemma~\ref{l:UC by p},
we see that almost surely according to the SRB measure,
\[ \begin{split}
  &U.C.^W 
  = \lim_{N \rightarrow \infty} \frac 1{N} \sum_{n=0}^{N-1} \tilde U (p_n).
\end{split} \]
By lemma~\ref{l:induction},
the definition of $\tilde U$, the orthogonal condition, we have
\[ \begin{split}
  &U.C.^W 
  = \lim_{N \rightarrow \infty} \frac 1{N} \sum_{n=0}^{N-1} \tilde U ((P^\perp\tilde \beta)^n p) \\
  =& \lim_{N \rightarrow \infty} \frac 1{N} \sum_{n=0}^{N-1} 
    \ip{\tilde \beta (P^\perp\tilde \beta)^n p, \tilde e_{n+1}}
    - \ip{(P^\perp\tilde \beta)^n p, \tilde e_{n}}
  = \lim_{N \rightarrow \infty} \frac 1{N} \sum_{n=0}^{N-1} 
    \ip{\tilde \beta (P^\perp\tilde \beta)^n p, \tilde e_{n+1}}
\end{split} \]
Finally, apply lemma~\ref{l:forget initial} to replace $p$ by its orbit-wise approximation $r$.
\end{proof}

\section{Fast response algorithm}
\label{s:algorithm}

This section concerns the algorithm for computing the new fast response formula.
First, to further reduce computational cost and round-off error, we show that the renormalization only needs to be done intermittently.
We will also write major equations in matrix notation, which is more suitable for coding.
Then we give a pseudocode of the algorithm.

\subsection{Intermittent renormalization}
\hfill\vspace{0.1in}

This subsection explains how renormalization only needs to be done once after a segment of several steps.
Here, renormalization refers to
\begin{itemize}
  \item The orthogonal projection $P^\perp$ in the fast formula, defined in appendix~\ref{a:projection}.
  \item The rescaling in $\beta$, that is, 
    dividing by the $\|f_*{\tilde e}\|$ factor in definition~\ref{d:betaU}.
  \item Othonormalizing the basis vectors of $V^u$.
\end{itemize}
The third operation does not appear explicitly in the fast formula, but a good basis improves numerical performance.
We show one by one that all three renormalizations only need to be done every once a while, and that they can be done together using a short formula in matrix notation.

The subscript convention for multiple segments, shown in figure~\ref{f:subscript}, 
is similar to that of the nonintrusive shadowing algorithm \cite{Ni_NILSS_JCP,Ni_nilsas}.
We divide an orbit into small segments, each containing $N$ steps.
The $\alpha$-th segment consists of steps $\alpha N$ to $\alpha N+N$,
where $\alpha$ runs from $0$ to $A-1$;
notice that the last step of segment $\alpha$ is also the first step of segment $\alpha+1$.
We use double subscript, such as $x_{\alpha, n}$,
to indicate the $n$-th step in the $\alpha$-th segment,
which is the $(\alpha N+n)$-th step in total.
Note that some quantities have two values on the step between two segments since renormalization is performed: for example, $e_{\alpha, N}\neq e_{\alpha+1, 0}$.
Continuity across interfaces is true only for some quantities, 
such as shadowing vectors $v$, $\tilde v$, and unit unstable cube $\tilde e$.
Later, we will define some quantities on the $\alpha$-th segment,
such as $C_\alpha, d_\alpha$ in equation~\eqref{e:C},
their subscripts are the same as the segment they are defined on.
For quantities to be defined at the interfaces, such as $Q_\alpha, R_\alpha, b_\alpha$ in \cref{e:QR}, their subscripts are the same as the total number of steps at the interface, divided by $N$.

\begin{figure}[ht] \centering
  \includegraphics[width=0.7\textwidth]{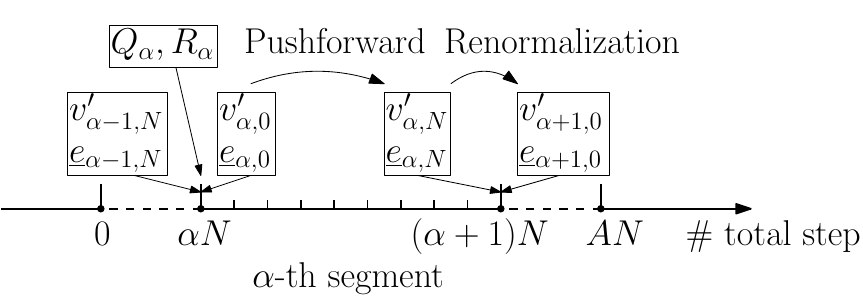}
  \caption{Subscript convention for multiple segments.}
  \label{f:subscript}
\end{figure}

\begin{lemma} [intermittent orthogonal projection]  \label{l:violin}
For any $\tilde r_{0,0} \in \cD^u$,
let 
\[ \begin{split}
\tilde r_{\alpha, n}:=\tilde \beta\tilde r_{\alpha, n-1}, \quad 
\tilde r_{\alpha+1, 0}:= P^\perp \tilde r_{\alpha, N},
\end{split} \]
then almost surely 
\[ \begin{split}
  U.C.^W 
  = \lim_{A \rightarrow \infty} \frac 1 {NA} \sum_{\alpha=0}^{A-1}
  \ip{ \tilde  r_{\alpha, N} , \tilde e_{\alpha, N} }.
\end{split} \]
\end{lemma}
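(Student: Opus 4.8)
The plan is to show that performing the orthogonal projection $P^\perp$ only once every $N$ steps yields the same limit as projecting at every step, which is the content of Theorem~\ref{t:lra} and Lemma~\ref{l:forget initial}. The key observation is that within a single segment the map $\tilde\beta$ already contains the rescaling by $\|f_*\tilde e\|$, so iterating $\tilde\beta$ across $N$ steps is just the renormalized second-order propagation without the intermediate orthogonal cleanups. I would first reduce to the single-step statement already proved: define the covariant sequence $p_{\alpha,n}$ by the true formula, and compare it to the computed $\tilde r_{\alpha,n}$. By Lemma~\ref{l:beta and U}(2), $P^\perp\tilde\beta r = P^\perp\tilde\beta(P^\perp r)$, so inserting or omitting an interior $P^\perp$ changes the iterate only by a component in $\wedge^u V^u$ (the parallel part killed by the next $P^\perp$ or by the inner product with $\tilde e$). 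This is the mechanism that lets the projection be deferred.

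The main steps, in order, would be as follows. First I would establish that the error between $\tilde r_{\alpha,n}$ and $p_{\alpha,n}$ stays controlled: by the same contraction estimate as in Lemma~\ref{l:forget initial}, the difference lies (after one projection) in the stable-like directions and is damped by a factor $C\lambda^{2n}$ under the renormalized propagation, so accumulated error from deferring projections within a length-$N$ segment is bounded and vanishes as the segments are traversed. Second, I would verify that the summand $\ip{\tilde r_{\alpha,N},\tilde e_{\alpha,N}}$ reproduces the per-step integrand of Theorem~\ref{t:lra}. Here the crucial algebraic point is that for the parallel component $r^\parallel\in\wedge^u V^u$ one has $\ip{f_*r^\parallel,\tilde e_1}/\|f_*\tilde e\|=\ip{r^\parallel,\tilde e}$, exactly the identity used in the proof of Lemma~\ref{l:beta and U}(3); this makes the telescoping collapse so that a segment-endpoint evaluation equals the sum of the per-step contributions $\tilde U$ inside that segment. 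Third, I would invoke ergodicity of the SRB measure (as in the proof of Theorem~\ref{t:lra}) to pass from the trajectory average $\frac{1}{NA}\sum_{\alpha}$ to the spatial average $\rho(\tilde U(p))=U.C.^W$, using continuity of the relevant integrand on $K$.

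The telescoping is cleanest if I write the segment contribution as a sum over its interior steps. Concretely, within segment $\alpha$ I would show
\[
\ip{\tilde r_{\alpha,N},\tilde e_{\alpha,N}}
= \sum_{n=0}^{N-1}\Big(\ip{\tilde\beta\tilde r_{\alpha,n},\tilde e_{\alpha,n+1}}-\ip{\tilde r_{\alpha,n},\tilde e_{\alpha,n}}\Big)
+ \ip{\tilde r_{\alpha,0},\tilde e_{\alpha,0}},
\]
where each bracketed term is exactly $\tilde U(\tilde r_{\alpha,n})$ and the leftover boundary term $\ip{\tilde r_{\alpha,0},\tilde e_{\alpha,0}}$ telescopes against the corresponding endpoint of the previous segment up to the discrepancy introduced by $P^\perp$. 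Because $P^\perp$ only removes the $\wedge^uV^u$ part and that part contributes equally to $\ip{\cdot,\tilde e}$ before and after propagation, these boundary terms cancel in the long-time average. Summing over $\alpha$ and dividing by $NA$ then gives the same Cesàro average as in Theorem~\ref{t:lra}.

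The main obstacle I anticipate is the bookkeeping of the boundary terms across the segment interfaces: I must confirm that the $\wedge^uV^u$-component discarded by the intermittent $P^\perp$ genuinely does not contaminate the limit, rather than merely being small per step. The resolution is the one-dimensionality of $\wedge^uV^u$ together with the identity $\ip{f_*r^\parallel,\tilde e_1}/\|f_*\tilde e\|=\ip{r^\parallel,\tilde e}$, which shows the parallel component is invariant in the only pairing that enters the integrand; hence deferring its removal is exact in the relevant inner products and only the exponentially damped stable error survives, vanishing in the average. A secondary technical point is ensuring the continuity of the limiting integrand on $K$ so that the Birkhoff-type average converges to the $\rho$-integral, which follows from the uniform convergence already established in Lemmas~\ref{l:arau} and~\ref{l:hao}.
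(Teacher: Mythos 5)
Your proposal is correct and takes essentially the same route as the paper: both arguments rest on Lemma~\ref{l:beta and U} (parts (2) and (3)) to show that the interior orthogonal projections can be dropped without changing the per-step integrand $\tilde U$, then telescope the $\tilde U$-sum within each segment to the endpoint inner product (the segment-initial boundary term vanishing because $\tilde r_{\alpha,0}\in\cD_e^\perp$), and finally invoke Lemma~\ref{l:UC by p}, Lemma~\ref{l:forget initial}, and the Birkhoff property of the SRB measure to identify the Ces\`aro average with $U.C.^W$. The only cosmetic difference is that you run the telescoping from the endpoint backward while the paper sums the per-step terms forward, and you describe the boundary terms as cancelling in the average when in fact each one is identically zero for $\alpha\ge1$; neither affects correctness.
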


\begin{proof}
Denote $(P^\perp\tilde \beta)^{\alpha N}\tilde r_{0,0}$ by $r'$,
by lemma~\ref{l:beta and U}, lemma~\ref{l:UC by p},
and the definition of the SRB measure,
the unstable contribution from step $\alpha N$ to $\alpha N + N-1$ is,
\[ \begin{split}
  \sum_{n=0}^{N-1} \tilde U ((P^\perp\tilde \beta)^{\alpha N+n} r_{0,0})
  = \sum_{n=0}^{N-1} \tilde U ((P^\perp\tilde \beta)^{n} r')
  = \sum_{n=0}^{N-1} \tilde U (P^\perp\tilde \beta^n  r')
  = \sum_{n=0}^{N-1} \tilde U (\tilde \beta^n  r') \\
  = \sum_{n=0}^{N-1} \ip{\tilde \beta^{n+1} r', \tilde e_{\alpha N+n+1}} 
  - \ip{\tilde \beta^n r', \tilde e_{\alpha N +n}} 
  =  \ip{\tilde \beta^N r', \tilde e_{\alpha N + N}} .
\end{split} \]
Average over all segments and adopt the subscript convention to prove the lemma.
\end{proof}

\begin{lemma} [intermittent rescaling] \label{l:inter rescale}
Let $e$ be the first-order tangent solution,
\[ \begin{split}
  e_{0,0} = \tilde e_{0,0}, \;
  e_{\alpha, n}:=f_* e_{\alpha, n-1},\;
  e_{\alpha+1, 0}:= e_{\alpha, N} / \|e_{\alpha, N}\| = \tilde e_{\alpha+1, 0};
\end{split} \]
For any $r_{0,0} \in \cD^u$, let $r$ be governed by the second-order tangent equation,
\[ \begin{split}
  r_{\alpha, n}:= f_* r_{\alpha, n-1} + (\nabla_{\tilde v_{\alpha, n-1}} f_*) e_{\alpha, n-1}
    + \psi_{\alpha, n} \nabla_{ e_{\alpha, n}} X_{\alpha, n} ,\;\;
  r_{\alpha+1, 0}:= P^\perp r_{\alpha, N} / \|e_{\alpha, N}\| .
\end{split} \]
Then almost surely 
\[ \begin{split}
  U.C.^W 
  = \lim_{A \rightarrow \infty} \frac 1 {NA} \sum_{\alpha=0}^{A-1}
  \frac {\ip{   r_{\alpha, N} ,  e_{\alpha, N} }} {\ip{e_{\alpha, N} ,  e_{\alpha, N} }}.
\end{split} \]
\end{lemma}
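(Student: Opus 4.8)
The plan is to reduce this lemma to the previous one (Lemma~\ref{l:violin}, intermittent orthogonal projection) by establishing a clean scaling relation between the unnormalized sequences $e,r$ and the normalized sequences $\tilde e,\tilde r$ appearing there. Write $g_{\alpha,n}:=\|e_{\alpha,n}\|$ for the accumulated stretching factor, so that $g_{\alpha,0}=1$ right after each interface rescaling. First I would record the elementary facts that $e_{\alpha,n}=g_{\alpha,n}\,\tilde e_{\alpha,n}$ within each segment (since $e_{\alpha,n}=f_*^n e_{\alpha,0}$ while $\tilde e_{\alpha,n}=f_*^n\tilde e_{\alpha,0}/\|f_*^n\tilde e_{\alpha,0}\|$ and $e_{\alpha,0}=\tilde e_{\alpha,0}$), and that the stretching factor obeys the one-step rule $g_{\alpha,n}=g_{\alpha,n-1}\|f_*\tilde e_{\alpha,n-1}\|$.

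The central claim to prove is that $r_{\alpha,n}=g_{\alpha,n}\,\tilde r_{\alpha,n}$ for all $\alpha,n$, which I would verify by a double induction. Within a segment, assuming $r_{\alpha,n-1}=g_{\alpha,n-1}\tilde r_{\alpha,n-1}$, I expand the unnormalized update. Using that $(\nabla_Y f_*)(\cdot)$ and $\nabla_{(\cdot)}X$ are homogeneous of degree one under overall scaling of their $u$-vector argument (so $(\nabla_{\tilde v}f_*)e_{\alpha,n-1}=g_{\alpha,n-1}(\nabla_{\tilde v}f_*)\tilde e_{\alpha,n-1}$ and $\nabla_{e_{\alpha,n}}X=g_{\alpha,n}\nabla_{\tilde e_{\alpha,n}}X$), the update becomes $g_{\alpha,n-1}\big(f_*\tilde r_{\alpha,n-1}+(\nabla_{\tilde v}f_*)\tilde e_{\alpha,n-1}\big)+g_{\alpha,n}\psi_{\alpha,n}\nabla_{\tilde e_{\alpha,n}}X_{\alpha,n}$. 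On the other hand, multiplying $\tilde r_{\alpha,n}=\tilde\beta\tilde r_{\alpha,n-1}$ by $g_{\alpha,n}$ and using $g_{\alpha,n}/\|f_*\tilde e_{\alpha,n-1}\|=g_{\alpha,n-1}$ yields the identical expression; this is exactly where the per-step division by $\|f_*\tilde e\|$ inside $\tilde\beta$ is absorbed into the single accumulated factor $g$. At the interface I would use linearity of $P^\perp$ together with $r_{\alpha,N}=g_{\alpha,N}\tilde r_{\alpha,N}$ and $\|e_{\alpha,N}\|=g_{\alpha,N}$ to get $r_{\alpha+1,0}=P^\perp r_{\alpha,N}/\|e_{\alpha,N}\|=P^\perp\tilde r_{\alpha,N}=\tilde r_{\alpha+1,0}=g_{\alpha+1,0}\tilde r_{\alpha+1,0}$, which closes the induction against the matching base case $r_{0,0}=\tilde r_{0,0}$.

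Given the scaling relation, the conclusion is immediate and termwise: substituting $r_{\alpha,N}=g_{\alpha,N}\tilde r_{\alpha,N}$ and $e_{\alpha,N}=g_{\alpha,N}\tilde e_{\alpha,N}$ cancels the $g_{\alpha,N}^2$ factors,
\[ \frac{\ip{r_{\alpha,N},e_{\alpha,N}}}{\ip{e_{\alpha,N},e_{\alpha,N}}} = \frac{\ip{\tilde r_{\alpha,N},\tilde e_{\alpha,N}}}{\|\tilde e_{\alpha,N}\|^2} = \ip{\tilde r_{\alpha,N},\tilde e_{\alpha,N}}, \]
using $\|\tilde e_{\alpha,N}\|=1$. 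Hence the asserted average is termwise equal to the one in Lemma~\ref{l:violin}, and the identity for $U.C.^W$ follows directly.

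The main obstacle I anticipate is the homogeneity bookkeeping: one must verify that every term of the second-order update scales as a degree-one function of the overall $u$-vector magnitude, and—more delicately—that the inhomogeneous term $\psi\nabla_e X$ carries the \emph{current} factor $g_{\alpha,n}$ while the propagated part carries $g_{\alpha,n-1}$, the two being reconciled precisely by $g_{\alpha,n}=g_{\alpha,n-1}\|f_*\tilde e_{\alpha,n-1}\|$. Everything else is linearity of $f_*$, $\nabla f_*$, and $P^\perp$, which lets the scalar $g$ commute cleanly through the recursion.
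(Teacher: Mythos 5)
Your proposal is correct and follows essentially the same route as the paper: an induction establishing $r_{\alpha,n}=\|e_{\alpha,n}\|\,\tilde r_{\alpha,n}$ within segments and across interfaces, followed by substitution into the intermittent-orthogonal-projection lemma, with the ratio $\ip{r_{\alpha,N},e_{\alpha,N}}/\ip{e_{\alpha,N},e_{\alpha,N}}$ collapsing termwise to $\ip{\tilde r_{\alpha,N},\tilde e_{\alpha,N}}$. Your explicit bookkeeping with the accumulated factor $g_{\alpha,n}$ and the relation $g_{\alpha,n}=g_{\alpha,n-1}\|f_*\tilde e_{\alpha,n-1}\|$ just spells out the cancellation that the paper's one-line induction step performs implicitly.
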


\begin{proof}
We prove by induction that if we choose $r_{0,0} = \tilde r_{0,0}$,
then $r_{\alpha, n} = \|e_{\alpha,n}\| \tilde r_{\alpha, n}$.
Within segment $\alpha$, assuming that we have this relation for $n-1$, then
\[ \begin{split}
r_{\alpha,n} = \|e_{\alpha,n-1}\| f_* \tilde r_{\alpha, n-1}
  +\|e_{\alpha,n-1}\| (\nabla_{\tilde v_{\alpha, n-1}} f_*) \tilde e_{\alpha, n-1}
  + \|e_{\alpha,n}\| \psi_{\alpha, n} \nabla_{\tilde e_{\alpha, n}} X_{\alpha, n} 
\\
=  \|e_{\alpha,n}\| \tilde \beta  \tilde r_{\alpha, n-1}
=  \|e_{\alpha,n}\|  \tilde r_{\alpha, n}.
\end{split} \]
Hence, the relation also holds for $n$;
it also holds across interfaces, since
\[ \begin{split}
  r_{\alpha+1, 0} 
  := P^\perp r_{\alpha, N} / \|e_{\alpha, N}\|
  = P^\perp \tilde r_{\alpha, N}
  =: \tilde r_{\alpha+1, 0}
  = \tilde r_{\alpha+1, 0} \|e_{\alpha+1, 0}\|,
\end{split} \]
where $\|e_{\alpha+1, 0}\|=1$ by construction.
Finally, substitute into lemma~\ref{l:violin}.
\end{proof}

\begin{proposition} [intermittent renormalization] \label{l:titan}
Neglecting the first two subscripts $\alpha$ and $n$, let $e:=\wedge_{i=1}^u e_i$, $r:=\sum_i e_1\wedge\cdots\wedge r_i \wedge\cdots\wedge e_u$.
Denote matrices $\underline e:=[e_1,\cdots,e_u]$, $\underline r:=[r_1,\cdots,r_u]$.
Then the renormalization in lemma~\ref{l:inter rescale} is realized by the following matrix operations:
\[ \begin{split}
  \underline e_{\alpha, N} &= Q_{\alpha+1} R_{\alpha+1}, \quad
  \underline e_{\alpha+1, 0} = Q_{\alpha +1},\\
  \underline r^\perp_{\alpha, N} 
    &= \underline r_{\alpha, N} - Q_{\alpha+1} Q_{\alpha+1}^T \underline r_{\alpha, N} ,\quad
  \underline r_{\alpha+1, 0} = \underline r^\perp_{\alpha, N} R^{-1}_{\alpha+1}.
\end{split} \]
Here, the first equation means performing the QR factorization,
and $Q^T\underline r:= [\ip{Q_i, r_j}] $ is a matrix.
Using $\Tr(\cdot)$ to denote the trace of a matrix, the unstable contribution is
\[ \begin{split}
  U.C.^W 
  =\lim_{A \rightarrow \infty}  \frac 1 {NA} \sum_{\alpha=0}^{A-1}
    \Tr\left( R^{-1}_{\alpha+1} Q_{\alpha+1} ^T \underline r_{\alpha, N}  \right).
\end{split} \]
\end{proposition}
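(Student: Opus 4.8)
The plan is to take lemma~\ref{l:inter rescale} as the starting point and to verify that the three matrix operations reproduce its coordinate-free renormalization $e_{\alpha+1,0}=e_{\alpha,N}/\|e_{\alpha,N}\|$ and $r_{\alpha+1,0}=P^\perp r_{\alpha,N}/\|e_{\alpha,N}\|$, and that the trace expression evaluates the summand $\langle r_{\alpha,N},e_{\alpha,N}\rangle/\langle e_{\alpha,N},e_{\alpha,N}\rangle$. Throughout I identify a $u$-vector $e=e_1\wedge\cdots\wedge e_u$ with the column matrix $\underline e$, and an element $r=\sum_i e_1\wedge\cdots\wedge r_i\wedge\cdots\wedge e_u$ of $\cD_e$ with the column matrix $\underline r$; I write $\Lambda_{\underline e}$ for the map $\underline s\mapsto\sum_i e_1\wedge\cdots\wedge s_i\wedge\cdots\wedge e_u$, so $r=\Lambda_{\underline e}(\underline r)$. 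The one algebraic tool I would isolate first is that the column-wedge is multilinear and alternating, so for a constant invertible $S$ a reparametrization $\underline e\mapsto\underline e S$, $\underline r\mapsto\underline r S$ scales both the cube and its derivative by $\det S$, giving $\Lambda_{\underline e S}(\underline r S)=\det(S)\,\Lambda_{\underline e}(\underline r)$.

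First I would treat the frame. From the QR factorization $\underline e_{\alpha,N}=Q_{\alpha+1}R_{\alpha+1}$ with $Q_{\alpha+1}$ orthonormal, the Gram determinant gives $\|e_{\alpha,N}\|^2=\det(\underline e^T\underline e)=(\det R_{\alpha+1})^2$, so the normalized cube $e_{\alpha,N}/\|e_{\alpha,N}\|$ equals, up to the sign $\mathrm{sign}(\det R_{\alpha+1})$, the wedge of the columns of $Q_{\alpha+1}$; this justifies $\underline e_{\alpha+1,0}=Q_{\alpha+1}$. Since $\mathrm{col}(Q_{\alpha+1})=V^u$, the matrix $Q_{\alpha+1}Q_{\alpha+1}^T$ is exactly the orthogonal projector $P^u$ onto $V^u$, so the columns of $\underline r^\perp_{\alpha,N}=\underline r_{\alpha,N}-Q_{\alpha+1}Q_{\alpha+1}^T\underline r_{\alpha,N}$ are the $P^\perp r_i$; using the decomposition $\cD_e=\R e\oplus\cD_e^\perp$ and the slot-wise action of $P^\perp$ from appendix~\ref{a:projection}, this matrix represents $P^\perp r_{\alpha,N}$ in the old frame, i.e. $\Lambda_{\underline e}(\underline r^\perp_{\alpha,N})=P^\perp r_{\alpha,N}$.

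The transversal normalization is where I expect the main obstacle, since it couples the change of frame with the rescaling and carries an orientation sign. Taking $S=R_{\alpha+1}^{-1}$ turns the old frame into $Q_{\alpha+1}$, and the multilinearity identity yields $\Lambda_{\underline e}(\underline r^\perp_{\alpha,N})=\det(R_{\alpha+1})\,\Lambda_{Q_{\alpha+1}}(\underline r^\perp_{\alpha,N}R_{\alpha+1}^{-1})$. Dividing by $\|e_{\alpha,N}\|=|\det R_{\alpha+1}|$ shows that $\underline r_{\alpha+1,0}=\underline r^\perp_{\alpha,N}R_{\alpha+1}^{-1}$ represents $P^\perp r_{\alpha,N}/\|e_{\alpha,N}\|$ in the frame $Q_{\alpha+1}$, up to the same factor $\mathrm{sign}(\det R_{\alpha+1})$ already carried by $\underline e_{\alpha+1,0}$. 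The care needed is to confirm that this sign is common to the cube and its derivative at every interface, so that by induction the matrix-built $u$-vectors agree with the abstract ones of lemma~\ref{l:inter rescale} up to a single per-segment sign shared by $e$ and $r$; since the final summand is degree one in $r$ and degree two in $e$, that shared sign cancels, exactly as the paper already exploits for manifold orientation.

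Finally I would compute the summand. For $r=\sum_i e_1\wedge\cdots\wedge r_i\wedge\cdots\wedge e_u$ the Gram formula gives $\langle r,e\rangle=\sum_i\det G^{(i)}$, where $G=\underline e^T\underline e$ and $G^{(i)}$ replaces the $i$-th row of $G$ by the $i$-th row of $M:=\underline r^T\underline e$. Cofactor expansion along row $i$ together with the adjugate identity $\mathrm{Cof}_{iq}(G)=(G^{-1})_{qi}\det G$ collapses the sum to $\langle r,e\rangle=\det(G)\,\Tr(MG^{-1})$, whence $\langle r,e\rangle/\langle e,e\rangle=\Tr(MG^{-1})$. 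Substituting $G=R_{\alpha+1}^TR_{\alpha+1}$ and $M=\underline r_{\alpha,N}^TQ_{\alpha+1}R_{\alpha+1}$ and using invariance of the trace under transpose gives $\Tr(MG^{-1})=\Tr(R_{\alpha+1}^{-1}Q_{\alpha+1}^T\underline r_{\alpha,N})$. Combining this with the frame and projection identities and feeding the result into lemma~\ref{l:inter rescale} then yields the stated expression for $U.C.^W$.
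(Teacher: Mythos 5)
Your proposal is correct, and its skeleton matches the paper's proof: QR gives the frame renormalization, $\underline r - Q Q^T\underline r$ realizes the slot-wise $P^\perp$, the change-of-basis identity $\Lambda_{\underline e S}(\underline r S)=\det(S)\,\Lambda_{\underline e}(\underline r)$ (which is exactly lemma~\ref{l:change of basis}) combined with $\det R_{\alpha+1}=\|e_{\alpha,N}\|$ makes the determinant cancel against the rescaling, and the result is fed back into lemma~\ref{l:inter rescale}. The one place you genuinely diverge is the conversion of the summand $\ip{r,e}/\ip{e,e}$ into a trace: the paper goes through the dual basis $e^i_u$ in $\cV^u$ via lemma~\ref{l:baozi} and solves the linear system $(e^i_u r^\parallel_l)\ip{e_i,e_j}=\ip{r_l,e_j}$, whereas you expand $\ip{r,e}=\sum_i\det G^{(i)}$ by the Gram formula and collapse it with the adjugate identity to $\det(G)\Tr(MG^{-1})$; both land on $\Tr\bigl((\underline e^T\underline e)^{-1}\underline e^T\underline r\bigr)$, and your route is arguably more self-contained since it needs only the definition of the inner product on $u$-vectors rather than the projection lemma. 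Your extra bookkeeping of the orientation sign is harmless but unnecessary under the usual convention that $R$ has positive diagonal (which the paper assumes when writing $\det R_{\alpha+1}=\|e_{\alpha,N}\|$). One small notational slip: the orthogonal projector $QQ^T$ onto $V^u$ is the paper's $P^\parallel$, not $P^u$, which in this paper denotes the oblique projection along $V^s$; the mathematics you do with it is nonetheless the intended one.
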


\begin{remark*}
(1) 
Rewriting $r$ on the new basis does not change $r$ as a $u$-vector, but improves the numerical performance.
(2)
Computing $\underline e$ via pushforwards and renormalization is also part of the nonintrusive showing algorithm.
(3)
The pushforward relation inside a segment is the same as that in lemma~\ref{l:inter rescale}.
\end{remark*}

\begin{proof}
The renormalization on $e$ is due to the definition of QR factorization.
For $r$, first substitute the QR factorization in the expression for projection, to show
\[ \begin{split}
  \underline r_{\alpha, N}^\perp
  = \underline r_{\alpha, N} - \underline e_{\alpha, N}
    (\underline e_{\alpha, N} ^T \underline e_{\alpha, N})^{-1} 
    (\underline e_{\alpha, N} ^T \underline r_{\alpha, N} )
  = \underline r_{\alpha, N} - Q_{\alpha+1} Q_{\alpha+1}^T \underline r_{\alpha, N}.
\end{split} \]
To write the rescaling in lemma~\ref{l:inter rescale}, $r_{\alpha+1, 0} := r^\perp_{\alpha, N} / \|e_{\alpha, N}\|$,
on the new basis $Q_{\alpha +1}$, use \cref{l:change of basis} to show
\[ \begin{split}
  \underline r_{\alpha+1, 0}
  = \det(R_{\alpha+1})\, \underline r^\perp_{\alpha, N} R^{-1}_{\alpha+1} / \|e_{\alpha,N}\|
  = \underline r^\perp_{\alpha, N} R^{-1}_{\alpha+1},
\end{split} \]  
since $\det(R_{\alpha+1}) = \|e_{\alpha,N}\|$ by definition of QR factorization.
Hence, we have proved that the renormalization for $r$ in lemma~\ref{l:inter rescale} is realized by the matrix operations in the lemma.

To simplify the expression for $U.C.^W$, 
first use lemma~\ref{l:baozi} in appendix~\ref{a:projection},
\[ \begin{split}
  \ip{ r_{\alpha, N} , e_{\alpha, N} }
  = \ip{ r^\parallel_{\alpha, N} , e_{\alpha, N} }.
\end{split} \]
Because $r_i^\parallel\in V^u$,
we can write it as 
\[ \begin{split}
r_i^\parallel = (e^j_u r_i) e_j,  
\end{split} \]
where $e^j_u$ is the covector of $e_j$ in $\cV^u$,
and $i,j$ are summed from $1$ to $u$.
Hence, 
\[ \begin{split}
r^\parallel =\sum_{i=1}^u e_1\wedge\cdots\wedge (e^j_u r^\parallel_i) e_j \wedge\cdots\wedge e_u  
= \sum_{i=1}^u e_1\wedge\cdots\wedge (e^i_u r^\parallel_i) e_i \wedge\cdots\wedge e_u,
\end{split} \]
because the wedge product between parallel vectors is zero.
Moreover, by lemma~\ref{l:inter rescale},
\[ \begin{split}
  &U.C.^W 
  = \lim_{A\rightarrow\infty} \frac 1 {NA} \sum_{\alpha=0}^{A-1}
    \frac {\ip{ r_{\alpha, N} , e_{\alpha, N} }} {\ip{ e_{\alpha, N} , e_{\alpha, N}}}
  = \lim_{A\rightarrow\infty} \frac 1 {NA} \sum_{\alpha=0}^{A-1} \sum_{i=1}^u
    e^i_{u,\alpha, N} r_{\alpha, N,i}^\parallel .
\end{split} \]

To write this expression in matrix notation,
first notice that $(e^i_u r^\parallel_l) e_i = r^\parallel_l$, so
\[ \begin{split}
(e^i_u r^\parallel_l) \ip{e_i,e_j} 
= \ip{ r^\parallel_l, e_j}
= \ip{ r_l, e_j} .
\end{split} \]
This is a linear equation system with $u$ many equations whose solution is
\[ \begin{split}
  e^i_u r^\parallel_l = i\textnormal{-th entry of the vector } 
  (\underline e^T \underline e)^{-1} (\underline e^T r_l).
\end{split} \]
Hence we can further write the expression of $U.C.^W$ in matrix notation,
\[ \begin{split}
  U.C.^W 
  =\lim_{A\rightarrow\infty}  \frac 1 {NA} \sum_{\alpha=0}^{A-1}
    \Tr\left( (\underline e_{\alpha, N} ^T \underline e_{\alpha, N})^{-1} 
    (\underline e_{\alpha, N} ^T \underline r_{\alpha, N} ) \right).
\end{split} \]
Substituting the QR factorization of $\underline e$, we have
\[ \begin{split}
  (\underline e^T \underline e)^{-1} (\underline e^T \underline r)
  = (R^T Q^T Q R)^{-1} R^T Q^T \underline r
  = R^{-1}Q^T\underline r.  
\end{split} \]
\end{proof}

\subsection{Pseudocode} \label{s:procedure list}
\hfill\vspace{0.1in}

This subsection gives a pseudocode of the fast response algorithm.
When $\cM=\R^M$, the corresponding simplifications are explained.
All sequences in this subsection are defined by some inductive relations on an orbit.
In particular, here the unstable vectors $\underline e$ and shadowing vectors $v, \tilde v$ are orbit-wise approximations of the true values.
We use $r$ as the orbit-wise version of $p$, whose approximation was established by this paper.
The explanation of the subscripts is given in figure~\ref{f:subscript}.

\begin{enumerate}[label={\roman*.}]

\item 
Evolve the dynamical system for a sufficient number of steps before $n=0$,
so that $x_0$ is on the attractor at the beginning of our algorithm. 
Then, evolve the system from segment $\alpha=0$ to $\alpha=A-1$,
each containing $N$ steps, to obtain the orbit,
\[ \begin{split}
  x_{\alpha, n+1} = f(x_{\alpha, n}), \quad x_{\alpha+1,0} = x_{\alpha,N}.
\end{split} \]

\item 
Start with initial condition $v'=0$ and $\tilde v'=0$, and random initial conditions 
for each column in $\underline e:=[e_1,\cdots,e_u]$.
Then, repeat the following procedures for all $\alpha$.
\begin{enumerate}
  \item
  From initial conditions, solve first-order tangent equations ($\alpha$ neglected),
  \[ \begin{split}
    \underline e_{n+1} = f_* \underline e_{ n},\quad
    v'_{n+1} = f_* v'_{n} + X_{ n+1}, \quad
    \tilde v'_{ n+1} = f_* \tilde v'_{n} + \Psi_{n+1}.
  \end{split} \]
  Here $X:=\delta f\circ f^{-1}=(\partial f/\partial \gamma) \circ f^{-1}$, 
  where $\gamma$ is the parameter of the dynamical system;
  $\Psi_{n+1} := \psi_{n+1} X_{n+1}$, 
  $\psi := \sum_{m=-W}^W (\Phi \circ f^m-\rho(\Phi))$ for a large $W$,
  $f_*$ is the pushforward operator.
  When $\cM=\R^M$, 
  \[ \begin{split}
    X_{n+1} = (\partial f/\partial \gamma)_n = [\partial f^i/\partial \gamma]_{n,i}\,,
    \quad \textnormal{} \quad 
    f_* = [\partial f^i/\partial z^j]_{ij},
  \end{split} \]
  where $[\cdot]_{ij}$ is the matrix with $(i,j)$-th entry given inside the bracket,
  $f^i$ is the $i$-th component of $f$, $z^j$ is the $j$-th coordinate of $\R^M$.

  \item
  Compute and store the covariance matrix and the inner product,
  \begin{equation} \begin{split} \label{e:C}
    C_\alpha &:= \sum_{n=0}^{N} {}' \underline e_{\alpha,n}^T \underline e_{\alpha,n}
    := \frac 12 \underline e_{\alpha,0}^T \underline e_{\alpha,0}
    + \sum_{n=1}^{N-1}\underline e_{\alpha,n}^T \underline e_{\alpha,n}
    + \frac 12 \underline e_{\alpha,N}^T \underline e_{\alpha,N},\\ 
    d_\alpha &:= \sum_{n=0}^{N} {}' \underline e_{\alpha,n}^T v'_{\alpha,n}, \quad
    \tilde d_\alpha := \sum_{n=0}^{N}{}' \underline e_{\alpha,n}^T \tilde v'_{\alpha,n},
  \end{split} \end{equation}
  where $\sum'$ is the summation with $1/2$ weight at the two end points.
  Here $\underline e^T \underline e := [\ip{e_i, e_j}]$ is a matrix,
  same for $\underline r^T \underline e$ and $Q^T\underline r$ later.
  
  \item 
  At step $N$ of segment $\alpha$, 
  orthonormalize $\underline e$ with a QR factorization, and compute
  \begin{equation} \label{e:QR}
    \underline e_{\alpha, N} = Q_{\alpha+1} R_{\alpha+1}, \quad
    b_{\alpha+1} = Q_{\alpha+1}^T v'_{\alpha, N}, \quad
    \tilde b_{\alpha+1} = Q_{\alpha+1}^T \tilde v'_{\alpha, N}.
  \end{equation}
  
  \item
  Set initial conditions of the next segment, 
  \[
    \underline e_{\alpha+1, 0} = Q_{\alpha +1},\quad
    v'_{\alpha+1, 0} = v'_{\alpha, N} - Q_{\alpha+1} b_{\alpha+1} ,\quad
    \tilde v'_{\alpha+1, 0} = \tilde v'_{\alpha, N} - Q_{\alpha+1} \tilde b_{\alpha+1} .
  \]
\end{enumerate}

\item 
Solve the nonintrusive shadowing problem,
\[\begin{split}
  &\min_{\{a_\alpha\}} \sum_{\alpha=0}^{A-1} 2 d_\alpha^T a_\alpha
  + a_\alpha^T C_\alpha a_\alpha \\
  \mbox{s.t. }& 
  a_{\alpha} = R_{\alpha} a_{\alpha-1} + b_{\alpha},
  \quad \alpha=1,\ldots,A-1.
\end{split}\]
We solve this via the Schur complement, which is a tridiagonal block matrix, and it can be solved with $O(A)$ cost:
similar ideas were first seen in \cite{Blonigan_2017_adjoint_NILSS}, then adapted to our case in section 3.4 of \cite{Ni_fdNILSS}.
Solve the same problem again, with $b$ replaced by $\tilde b$, for $\tilde \alpha$.
Then compute $ v_\alpha $ and $\tilde v_\alpha$,
\[
 v_\alpha = v'_\alpha + \underline e_\alpha a_\alpha, \quad
 \tilde v_\alpha = \tilde v'_\alpha + \underline e_\alpha \tilde a_\alpha.
\]

\item
Compute the shadowing contribution,
\[ 
  S.C.
  = \lim_{A\rightarrow\infty}
    \frac 1 {AN} \sum_{\alpha=0}^{A-1} \sum_{n=0}^N{}'
     v_{\alpha,n}( \Phi_{\alpha,n}),
\]
where $v(\cdot)$ means to differentiate a function in the direction of $v$.

\item 
Denote $\underline r:=[r_1,\cdots,r_u]$.
Set initial condition $r_{0,i} =0$ for all $1\le i\le u$.
Then repeat the following procedures for all $\alpha$.
\begin{enumerate}
  \item  
  From initial conditions, solve second-order tangent equations, $\alpha$ neglected,
  \[ \begin{split}
  r_{n+1,i} 
  = f_*r_{n,i} 
    + (\nabla_{\tilde v_n} f_*) e_{n,i} 
    + \psi_{n+1} \nabla_{e_{n+1,i}} X_{n+1} 
    \;.
  \end{split} \]
  Here $\nabla_{(\cdot)}f_*$ is the Riemannian connection of the pushforward operator
  defined in appendix~\ref{a:f*}. 
  In $\R^M$, $\nabla f_*$ is the derivative of Jacobian,
  or the Hessian tensor, and 
  \[ \begin{split}
    \nabla_Y f_* = [Y (\partial f^i /\partial z^j)]_{ij}    .
  \end{split} \]
  To compute $\nabla_{e_{n+1,i}} X_{n+1}$, 
  denote the coordinate at $x_n$ and $x_{n+1}$ by $\zeta$ and $z$, then
  \[ \begin{split}
  &\nabla_{e_{n+1,i}} X_{n+1} 
  = \nabla_{f_* e_{n,i}} X_{n} \circ f
  = \nabla_{f_* e_{n,i}} \delta f_{n} 
  = \nabla_{f_* e_{n,i}} \left(\delta f_{n}^j \pp{}{z^j} \right) \\
  =& f_* e_{n,i} \left(\delta f_{n}^j \right)  \pp{}{z^j} 
  + \delta f_{n}^j \nabla_{f_* e_{n,i}} \pp{}{z^j} 
  = e_{n,i}^l \pp {\delta f_{n}^j} {\zeta^l} \pp{}{z^j} 
  + \delta f_{n}^j \nabla_{f_* e_{n,i}} \pp{}{z^j}. 
  \end{split} \]
  Here $\pp{}{z_j}$ is the $j$-th coordinate vector.
  In $\R^M$, both coordinates $\zeta$ and $z$ are the canonical coordinate, both denoted by $z$,
  and $\nabla \pp{}{z_j}$ is zero, hence
  \[ \begin{split}
  \nabla_{e_{n+1,i}} X_{n+1} 
  = e_{n,i}^l \pp {\delta f_{n}^j} {z^l} \pp{}{z^j} 
  = [e_{n,i} (\delta f_{n}^j)]_j.
  \end{split} \]
  This is a vector at $x_{n+1}$,
  where $[\cdot]_j$ is a vector in $\R^M$ whose $j$-th entry is given in the bracket.
  The differentiation $e_{n,i}(\delta f_{n}^j)$ happens at $x_n$,
  when $\delta f_{n}^j$ is a function in a neighborhood of $x_n$.

  \item
  To set initial conditions of the next segment, first orthogonally project,
  \[
  \underline r_{\alpha, N}^\perp
  = \underline r_{\alpha, N} - Q_{\alpha+1} Q_{\alpha+1}^T \underline r_{\alpha, N}.
  \]
  Then change basis and rescale,
  \[ \begin{split}
    \underline r_{\alpha+1, 0} = \underline r^\perp_{\alpha, N} R^{-1}_{\alpha+1}.
  \end{split} \]
\end{enumerate}

\item 
Let $\Tr(\cdot)$ be the trace of a matrix,
compute the unstable contribution,
\[ \begin{split}
  U.C.^W 
  = \lim_{A\rightarrow\infty}
  \frac 1 {NA} \sum_{\alpha=0}^{A-1}
    \Tr\left( R^{-1}_{\alpha+1} Q_{\alpha+1} ^T \underline r_{\alpha, N}  \right).
\end{split} \]

\item
The linear response is 
\[ \begin{split}
  \delta \rho(\Phi) =\lim_{W\rightarrow \infty} S.C. - U.C.^W.
\end{split} \]

\end{enumerate}

\section{Discussions}
\label{s:discuss}

\subsection{Notes on implementation}
\label{s:remarks}
\hfill\vspace{0.1in}

When the number of homogeneous tangent solutions computed, $u'$, is strictly larger than $u$,
the unstable contribution part of our algorithm may or may not work,
depending on whether the renormalized second order tangent solutions have a meaningful average.
This is different from the nonintrusive shadowing algorithm,
which works for any $u'\ge u$.
It remains to investigate whether and how much error is incurred when using a large $u'$,
especially how the error relates to the spectrum of the Lyapunov exponents.

We discuss how to choose $N$, the number of steps in each segment.
A larger $N$ leads to fewer segments for the same total number of steps, thus saving some computational cost.
However, the major computational cost in the algorithm comes from computing first- and second order tangent equations.
Hence, the benefit of choosing a very large $N$ is limited.
Still, if we really want a large $N$, 
the upper bound is from the numerical stability of first- and second order tangent equations.
Notice that second order tangent equations are essentially first order tangent equations 
with a second-order inhomogeneous term.
Hence, the limiting factors for large $N$ is that, over one segment, the $u$ many first-order homogeneous tangent solutions can not grow too large or too parallel to each other.

There are several places in the algorithm where we contract a high-order tensor
with several one-dimensional vectors,
which can be done more efficiently via the so-called `vectorized' programming.
More specifically, when contracting one tensor with several vectors,
we should load the tensor into the computer once and take the inner-product with all vectors, 
instead of loading the tensor once for each vector \cite{Ni_nilsas,Ni_CLV_cylinder}.
This vectorization can be used in the nonintrusive shadowing algorithm,
where several first-order tangent solutions, $v'$ and $\{e_i\}_{i=1}^u$,
are multiplied with the same Jacobian matrix $f_*$, which is a two-dimensional tensor.
Vectorization can also be used when solving second-order tangent equations,
for the contraction between $f_*$ and $\{r_i\}_{i=1}^u$, 
$\nabla_{\tilde v} f_*$ and $\{e_i\}_{i=1}^u$, and $\nabla X$ and $\{e_i\}_{i=1}^u$.

Finally, we remind readers that our result is for discrete-time systems.
In numerical simulation, continuous-time systems are discretized, and it seems that we can just apply our current algorithm.
However, it is still better to use algorithms based on continuous-time theories, and a separate treatment of the time direction gives better performances.
The continuous-time theory was given in \cite{Ni_asl,vdivF}, and the algorithm for the shadowing contribution was given in \cite{Ni_NILSS_JCP,Ni_adjoint_shadowing}.
Similarly, a slightly different treatment should be needed for delayed-time systems.

\subsection{Cost estimation}
\label{s:cost estimation}
\hfill\vspace{0.1in}

Depending on the sparsity of the problem,
the main computational complexity of the fast response may come from different parts.
In the worst case, the Hessian tensor $\nabla f_*$ is dense, and contracting it with $\tilde v$ takes $O(M^3)$ operations.
If we further assume that each entry in $\nabla f_*$ requires $O(1)$ operations to obtain, then it also takes $O(M^3)$ operations to obtain $\nabla f_*$.
Hence, for the dense case, the cost for each step of the fast response algorithm is dominated by obtaining $\nabla_{\tilde v}f_*$, 
which only needs to be done once, and the cost is typically $O(M^3)$.

On the other hand, for many engineering problems, 
for example, fluid mechanics and image processing,
$\nabla f_*$ and $f_*$ typically have only $O(M)$ entries,
with each entry taking $O(1)$ operations to compute.
The cost then comes mainly from computing terms such as $f_*\underline e$,
whose complexity is $O(uM)$ for each step,
and can be computed faster via vectorized programming.
The sparse case is perhaps more common in real-life applications.

Finally, we give a very crude and formal estimate on 
the error and total cost of fast response,
using the decorrelation step number $W$, the total number of steps $T:=AN$,
the unstable dimension $u$, and the system dimension $M$.
First, the error in using finite $W$, $ U.C.-U.C.^W$, 
is $O (\theta^W)$ for some $0<\theta<1$, 
which is the decorrelation rate.
We make the simplifying assumption that, 
on any orbit, for any mean-zero function $\phi$,
\[ \begin{split}
  \sum_{n=1}^N \phi(x_n) \sim O(\sqrt N).
\end{split} \]
This assumption is verified later in our numerical example,
and it can be proved under hyperbolicity.

Passing $\Phi$ to $\Phi-\rho(\Phi)$ makes $\rho(\Phi)=0$, so $\psi \sim O(\sqrt{W})$;
further notice that $\psi X$ is the inhomogeneous term for $\tilde v$,
and that $\tilde v$ and $\psi$ are the inhomogeneous terms in $\tilde \beta$;
hence, $\tilde U(p) \sim O(\sqrt{W})$.
Hence, the error caused by averaging $\tilde U(p)$ on a finite orbit of length $T$ is
$\sim O(\sqrt{W/T})$.
Hence, the total error for the unstable contribution, 
which is also the major error for the fast response, denoted by $h$, is
\[ \begin{split}
  h\sim O(\theta^W)  + O(\sqrt{W/T})
\end{split} \]
In practice, we want the two errors to be roughly equal to each other,
hence $T\sim {\theta^{-2W}}W$.
By our discussion of the cost at each step, for typical problems with sparsity, the numerical complexity of fast response is
\[ \begin{split}
  \textnormal{Complexity of FR} 
  \sim O(uM \theta^{-2W}W).
\end{split} \]

In comparison, for the path-perturbation formula,
the size of the integrand is $O( \lambda_{max}^W)$,
where $\lambda_{max}>1$ is the largest Lyapunov exponent.
By similar arguments, 
\[ \begin{split}
  h\sim O(\theta^W)  + O(\lambda_{max}^W/\sqrt{N'})
  ,\quad
  T=N'W\sim \theta^{-2W}  \lambda_{max}^{2W} W .
\end{split} \]
Here, $N'$ is the number of sample orbits, each with $W$ steps,
and $T$ is the total number of steps in all sample orbits.
Note that only one first-order tangent equation and one $f$ are computed at each step;
hence, for the sparse case, 
the cost per step is $O(M)$, and in total, 
\[ \begin{split}
  \textnormal{Complexity of the path-perturbation method} 
  \sim O(M \theta^{-2W}\lambda_{max}^{2W} W),
\end{split} \]
Hence, for typical problems with sparsity,
the numerical complexity of the path-perturbation formula
can be about $O(\lambda_{max}^{2W}/u)$ times higher than the fast response.

\subsection{Beyond uniform hyperbolicity}
\label{s:beyond hyper}
\hfill\vspace{0.1in}

For deterministic dynamical systems, there are several levels of non-uniform hyperbolicity and non-hyperbolicity.
We discuss whether the fast response method is affected and, if so, how to potentially fix it.

The first level is that the system is non-uniformly hyperbolic, but we can prove that the linear response exists and that conventional formulas are correct.
Then, the equivalence of other linear response formulas with fast response formulas can be proved more easily than the existence of the linear response.
Hence, our results should be theoretically correct whenever any linear response formula gives the correct sensitivity.
It is mainly because of the simplicity of the discussion that we assumed uniform hyperbolicity in this paper.

In terms of numerics, so far, it seems that the fast response algorithm works whenever we can prove the existence of the linear response, even when the system slightly fails the uniform hyperbolicity assumption and the regularity assumption.
We applied the fast response algorithm on the Pomeau-Manneville map in the appendix of \cite{GN25}, where the map has only summable but not exponential decay of correlations, and the result is good (the proof of linear response was given in \cite{BT16,bahsoun16,Korepanov2016}).
We also tried the Lorenz maps in \cite{far}, where the map has a cusp, and the result is good (the proof was given in \cite{bahsoun23}).
There are examples where the dynamic hides some features in singularities, such as the one in \cite{wormell_pieceMap}, and we need to add some correction terms to the fast response method.
The convergence of the fast response method actually depends on the integrability of shadowing vectors and renormalized second-order tangent solutions, which might exist somewhere beyond uniform hyperbolicity.

In the highest level of non-hyperbolicity, people can prove that there is no linear response.
One of the examples is the tent map, and Baladi proved that the linear response does not exist, that is, the physical measure is not differentiable with respect to the dynamics \cite{Baladi2007}; there are other examples such as \cite{Wormell2019}.
Failure can also be caused by other reasons, such as intermittency or homoclinic tangencies in systems such as the Henon map \cite{Pujals2000,Fornss1992}.
For these cases, no linear response algorithm works, but they offer good venues to investigate how different mechanisms fail the fast response algorithm differently.

Most practical or physical systems fall into the large gray area in terms of the level of non-hyperbolicity, where we do not yet have a proof or disproof of the linear response, and no existing algorithm seems to work accurately.
Some examples are modeled from physical phenomena, such as the Lorenz 96 system.
People have different opinions about whether the linear response exists.
The author's opinion is that either linear response does not exist and we must have approximations, or some approximations would significantly improve the cost/error.

The first approximation is to compute only the shadowing but not the unstable part of the linear response, since shadowing is less affected by non-hyperbolicity. 
We tried this on the Lorenz 63 system \cite{Ni_NILSS_JCP,Ni_nilsas} and a 3d fluid problem \cite{Ni_CLV_cylinder}, and the shadowing contribution alone gives good results.
On the other hand, the unstable part of the linear response is more affected by the non-hyperbolicity, and that part of code does not converge well.
For more chaotic systems, such as the Lorenz 96 system, the result is not good: the shadowing code converges, but the systematic error due to missing the unstable part is large.

The other approach for approximation is to add noise and incorporate another linear response formula, the kernel-differentiation formula (also known as the Cameron-Martin-Girsanov method or the likelihood ratio method).
But we can not just add a simple uniform noise, which would incur either a large error or a high computational cost; the reasons are explained in \cite{Ni_kd}.
We should add a large but local noise, only at locations where the hyperbolicity is bad.
Then we use the kernel-differentiation formula where the noise is large, so the sampling error is small.
Where the noise is small, we use the fast-response algorithm, which is efficient regardless of noise scale, but requires hyperbolicity.

We proposed this program in \cite{Ni_kd}. 
We call it the `triad' program since it involves the path-perturbation method (this is highly related to the shadowing contribution in this paper), the divergence method (this is highly related to the unstable contribution in this paper), and the kernel-differentiation method.
This program requires transferring information between the kernel-differentiation formula and the fast response formulas.

To achieve this, we recently gave the path-kernel formula for combining the kernel-differentiation with the path-perturbation method for linear responses \cite{dud,apk}. 
We also gave the divergence-kernel formula for combining the kernel-differentiation with the divergence method \cite{divKer,DKlinR}.
The path-kernel and divergence-kernel are not only for extending the fast response method; they are useful tools by themselves.
For example, the path-kernel method solves a difficult version of variational data assimilation problem \cite{apk}, and the divergence-kernel method offers a much more straightforward framework for diffusion models \cite{DKlinR}.
However, the path-kernel and divergence-kernel methods still have shortcomings: path-kernel is expensive for small noise systems with very strong expanding directions; divergence-kernel is expensive for small noise systems with very strong contracting directions.

Hence, to overcome these shortcomings of all the 2-in-1 methods mentioned above, we do need to combine the kernel-differentiation with our path-divergence method, or the fast response method.
This triad program, once done, should compute the linear response of random dynamical systems with minimal cost, or compute the approximation of linear response of deterministic system with best accuracy under minimal cost.

\section{A 21-dimensional numerical example} \label{s:example}

This section illustrates the fast response algorithm on a 21-dimensional solenoid map with 20 unstable dimensions.
Here $\cM = \R \times \T^{20}$, and the governing equation is
\[ \begin{split}
  x^1_{n+1} &= 0.05x^1_{n} + \gamma + 0.1 \sum_{i = 2}^{21} \cos(5x^i_{n}) \\
  x^i_{n+1} &= 2x^i_{n} + \gamma (1+x^1_{n}) \sin(2x^i_{n}) \mod 2\pi, \quad \textnormal{for} \quad 2\le i\le 21
\end{split} \]
where the superscript labels the coordinates.
The perturbation is caused by changing $\gamma$, and the instantaneous objective function is 
\[ \begin{split}
  \Phi(x) := (x^1)^3 + 0.005 \sum_{i = 2}^{21} (x^i - \pi)^2.
\end{split} \]

The default setting, $N=20$ steps in each segment, $A=200$ segments, $\gamma=0.1$, and $W=10$, is used unless otherwise noted.
The code is at \url{https://github.com/niangxiu/fr}.
Figure~\ref{f:orbit} shows a typical orbit.
Figure~\ref{f:change A} shows that the variance of the computed derivative is proportional to $A^{-0.5}$.
Figure~\ref{f:change W} shows that the bias in the averaged derivative decreases as $W$ increases,
but the variance increases as $W^{0.5}$,
indicating that we should increase $A$ together with $W$.
This square-root trend verifies the assumption we made for the error estimation in section~\ref{s:cost estimation}.
Finally, figure~\ref{f:change parameter} shows that 
the derivative computed by fast response correctly reflects the trend of the objective as $\gamma$ changes.

\begin{figure}[ht] \centering
  \includegraphics[width=0.40\textwidth]{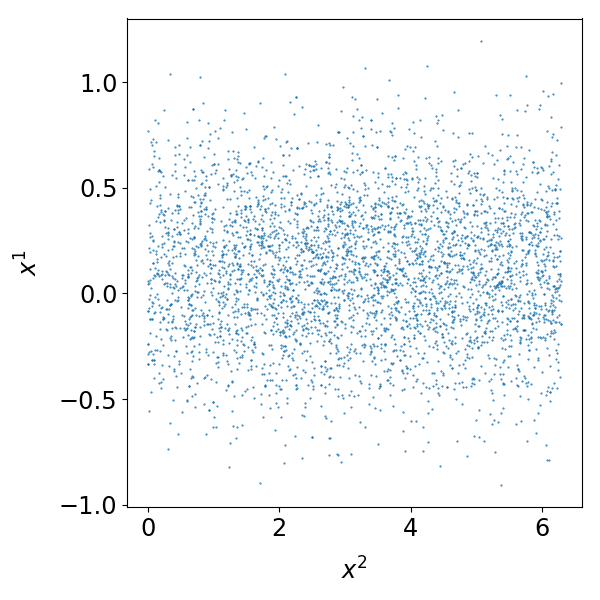}
  \includegraphics[width=0.40\textwidth]{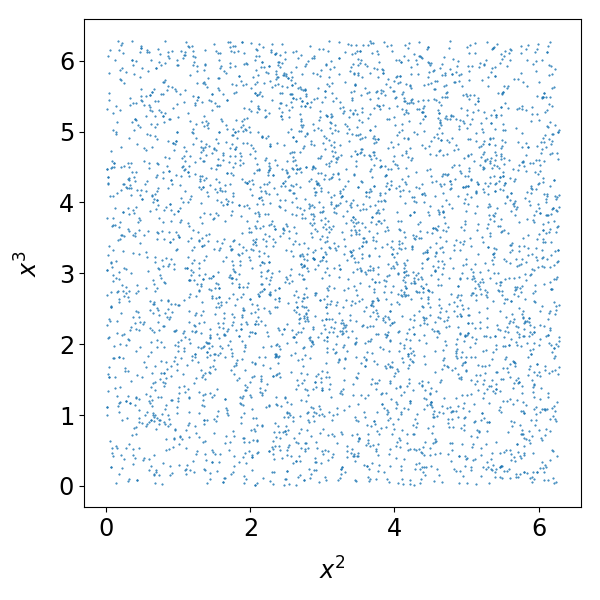}
  \caption{The empirical measure of a orbit with default setting.}
  \label{f:orbit}
\end{figure}

\begin{figure}[ht] \centering
  \includegraphics[width=0.45\textwidth]{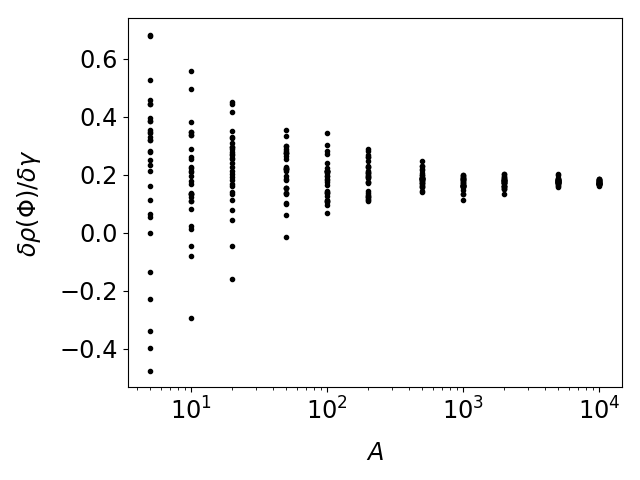}
  \includegraphics[width=0.45\textwidth]{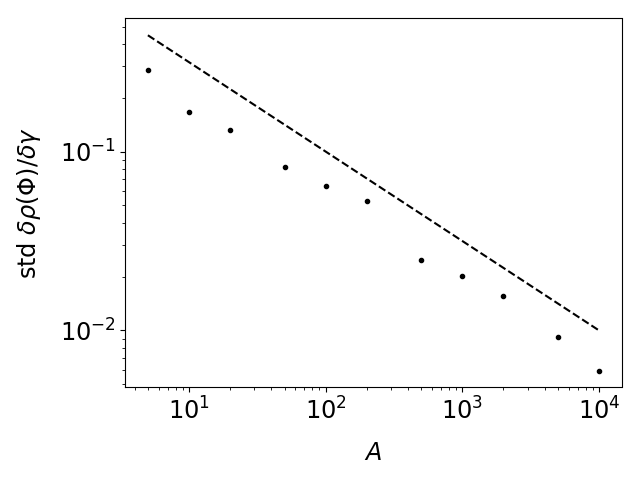}
  \caption{Effects of $A$. Left: derivatives from 30 independent computations for each $A$.
  Right: the sample standard deviation of the computed derivatives,
  where the dashed line is $A^{-0.5}$.}
  \label{f:change A}
\end{figure}

\begin{figure}[ht] \centering
  \includegraphics[width=0.45\textwidth]{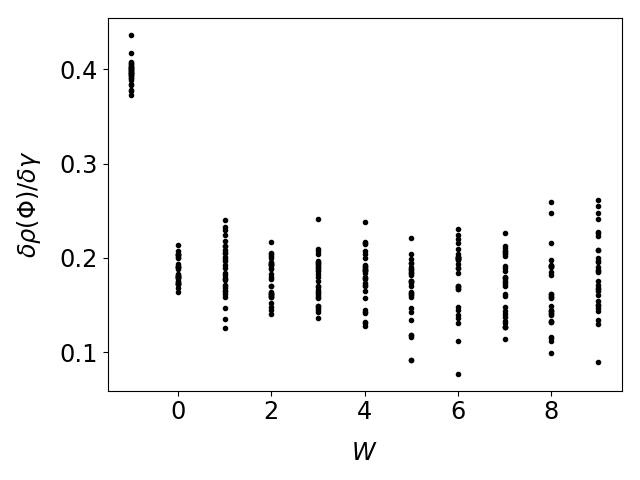}
  \includegraphics[width=0.45\textwidth]{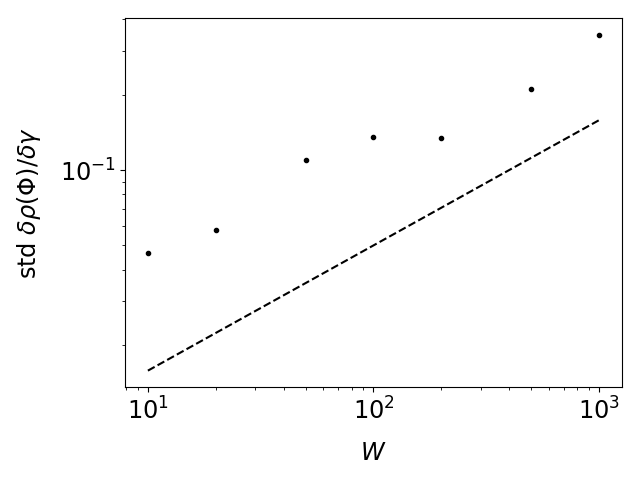}
  \caption{Effects of $W$. Left: derivatives computed by different $W$'s.
  Right: standard deviation of derivatives, where the dashed line is $0.005W^{0.5}$.}
  \label{f:change W}
\end{figure}

\begin{figure}[ht] \centering
  \includegraphics[width=0.5\textwidth]{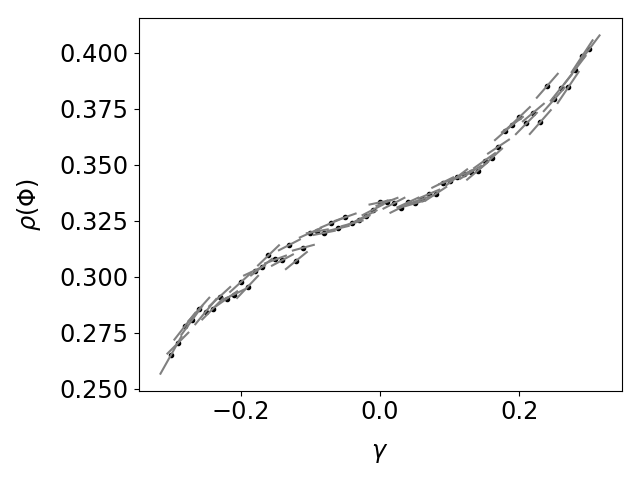}
  \caption{Averaged objectives and derivatives for different parameter $\gamma$.
  The grey lines are the derivatives computed by fast response.}
  \label{f:change parameter}
\end{figure}

On a single-core 3.0GHz CPU, for $10^4$ segments, which is a total of $2\times10^5$ steps,
the time for computing the orbit is 7.4 seconds using non-compiled python code;
whereas the fast response algorithm on the same orbit takes another 47 seconds.
So, the time cost is about 6 times of simulating the orbit.
In fact, our implementation of the fast response algorithm is not optimal; 
for example, we still use dense matrix representations for $f_*$ and $\nabla f_*$.
Our algorithm can run even faster if we pre-compile the code or use the sparsity of the system, either via sparse matrices or graph tracing.

Our example is difficult for the path-perturbation method.
With $W=10$, the magnitude of the integrand is
\[ \begin{split}
  \|f^W_* X (\Phi)\|\sim \lambda_{max}^W \|X\| \|d\Phi\|
  \sim 2^{10} \times \sqrt{21} \times 1 \sim 4700,
\end{split} \]
In order to bring the standard deviation below $7\times 10^{-3}$,
the number of sample orbits required is about $4.5\times 10^{11}$.
Hence, the total number of steps computed is about $4.5\times 10^{12}$,
where each step contains one application of $f$ and one step of the first order tangent equation.
In comparison, as shown in figure~\ref{f:change A},
the same setting requires running the fast response for $2\times 10^5$ steps,
where each step contains one application of $f$, about 30 first and second order tangent equations.
Say that the cost per step is 6 times the path-perturbation algorithms,
then, overall, the fast response is about $10^{6}$ times faster.

Our example is also difficult for divergence or transfer operator algorithms, where SRB measures are approximated on a basis.
Since the attractor has 20 dimensions, even with adaptive meshing,
and each dimension has only 10 elements, the entire attractor needs about $10^{20}$ basis elements.
Assuming that each element only needs one float number to store, it would cost $10^{10}$ GB only to store the elements.

Our example is also difficult for previous blended algorithms.
For the default $\gamma$, the shadowing contribution is about 0.4, and the unstable contribution is about 0.2.
Algorithms that use approximations of the unstable contribution,
such as nonintrusive shadowing and blended response, have a large error.
Also note that if we did not use a small coefficient in the unstable part of $\Phi$,
then the unstable contribution takes the largest portion:
this is predicted in \cite{Ruesha} for cases where $u/M$ is large.

Finally, we compare with regression or surrogate-based methods.
In figure~\ref{f:change parameter}, because $\rho(\Phi)$ computed from finite orbits has oscillations, 
revealing the correct trend between $\rho(\Phi)$ and $\gamma$ takes about 20 data points with different $\gamma$.
Although it is hard to quantify the error in such regression methods,
which would require a probability distribution on regression models;
we can still roughly say that, overall, the fast response is a few times faster than regression methods.

The cost comparison with regression methods is very encouraging.
In the simplest non-chaotic situations, where we want the derivative of a function,
computing the derivative function is typically a few times faster than regression.
The fast response algorithm recovers such cost comparison for chaotic situations:
this hints that the fast response could perhaps be close to the best possible efficiency.
With further work, our algorithm can be transformed into an adjoint algorithm whose cost is almost independent of the number of parameters.
Many practical problems have many parameters, so the fast adjoint response algorithm can be much faster than regression methods \cite{far}.

\section*{Acknowledgements}

I am in great debt to Yi Lai for discussions on the geometric aspect of this problem.
I also thank David Ruelle, Dmitry Dolgopyat, Alansari Nawaf, Charles Pugh, Peidong Liu, and Stefano Galatolo for helpful discussions.
This research is supported by the China Postdoctoral Science Foundation 2021TQ0016,
the International Postdoctoral Exchange Fellowship Program YJ20210018,
and the Richman Fellowship from the math department of UC Berkeley.

\section*{Data Availability Statements}

The data used in the numerical example are generated by the code which is publicly available at \url{https://github.com/niangxiu/fr}.

\appendix
\section{Pushforward operators as tensors} \label{a:f*}

In $\R^M$, the pushforward operator $f_*$ is a matrix.
In particular, when differentiating a composition of several pushforwards, the Leibniz rule applies.
In this section,
we establish the Leibniz rule for general pushforward operators on Riemannian manifolds.
To achieve this, we will define the pushforward operator on vectors as a $(1,1)$-tensor and define its Riemannian connection.
Finally, we extend this Leibniz rule to $u$-vectors.

\begin{definition} [$f_*$]
\label{d:f*}
  Let $f:\cM_1\rightarrow \cM_2$ be a $C^\infty$ diffeomorphism.
  Let $e\in \mathfrak{X}(\cM_1)$ be a $C^\infty$ vector field over $\cM_1$; 
  $ \alpha\in\mathfrak{X}(\cM_2)^* $ be a $C^\infty$ 1-form over $\cM_2$.
  Define
  \[ \begin{split}
    f_*(\alpha, e) := \alpha (f_* e) = e (f^* \alpha).
  \end{split} \]
\end{definition}

Let $z_1$, $z_2$ be coordinates on $\cM_1$, $\cM_2$, respectively.
Written in coordinates, we have
\[ \begin{split}
  f_* = \pp{}{z_2^j} f^j_i dz_1^i ,\quad
  f_* (\alpha, e) = (\alpha \pp{}{z_2^j}) f^j_i (e dz_1^i ) ,
\end{split} \]
where $f^j_i$ is the Jacobian matrix under $z_1$ and $z_2$.
According to our definition, $f_*$ is a tensor field, in the sense that it is a $C^\infty$-multilinear function $f_*:\mathfrak{X} (\cM_1) \times \mathfrak{X}^*(\cM_2) \rightarrow C^\infty(\cM_2)$.
We then define the Riemannian connection of this pushforward operator.

For $\pp {} q \in T\cM_1$, the textbook way to define connections of tensors is
\[ \begin{split}\label{e:dandan}
  (\nabla_{\pp{}q} f_*) ( \alpha, e)
  :=& ({f_* \pp{}q }) (\alpha f_* e) 
  - (\nabla_{f_* \pp{}q } \alpha) f_* e
  - \alpha f_* \nabla_{\pp{}q} e 
  = \alpha \nabla_{f_* \pp{}q } (f_* e) - \alpha f_* \nabla_{\pp{}q} e,
\end{split} \]
where the second equality uses the definition of derivative of covectors, $\nabla (\omega X) = \omega \nabla X + (\nabla \omega) X$.
Hence, $(\nabla_{(\cdot)} f_*) (\cdot,\cdot)$ is a tensor field in the sense that it is a $C^\infty$-multilinear function
$:\mathfrak{X} (\cM_1) \times \mathfrak{X} (\cM_1) \times \mathfrak{X}^*(\cM_2) \rightarrow C^\infty(\cM_2)$.
Notice that, similar to typical Riemannian connections,
$\nabla f_*$ only requires the value of $\pp{}q$ at a point.
Since $\alpha$ is a common factor, we may neglect it on both sides of the equation.
Hence, we define the Riemannian connection of the pushforward operator $(\nabla_{(\cdot)} f_*) (\cdot):\mathfrak{X} (\cM_1) \times 
\mathfrak{X} (\cM_1) \rightarrow \mathfrak{X}(\cM_2)$, as
\[ \begin{split} 
  (\nabla_{ \pp{}q } f_*) e
  := \nabla_{f_* \pp{}q } (f_* e) - f_* \nabla_{\pp{}q} e.
\end{split} \]
To write $\nabla f_*$ in coordinates, by the coordinate form of $f_*$, we have
\[ \begin{split}
  (\nabla_{ \pp{}q } f_*) e
  = (\nabla_{f_* \pp{}q } \pp{}{z_2^j}) f^j_i (dz_1^i e)
  + (\pp{}{z_2^j}) (d f^{j}_i \pp{} q) (dz_1^i e)
  + (\pp{}{z_2^j}) f^j_i (e \nabla_{\pp{}q} dz_1^i).
\end{split} \]

\begin{lemma}[Leibniz rule for composition of pushforward operators]
\label{l:chala}
  Let $g:\cM_2\rightarrow \cM_3$ be a diffeomorphism.
  Then
  \[ \begin{split}
  \nabla_{g_* f_* \pp{}q } (g_* f_* e) 
  = 
  (\nabla_{f_* \pp{}q} g_*) f_* e 
  + g_* (\nabla_{\pp{}q} f_*) e 
  + g_* f_* \nabla_{\pp{}q} e .
  \end{split} \]
\end{lemma}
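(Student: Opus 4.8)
The plan is to apply the defining identity of the connection $\nabla f_*$ (Definition~\ref{d:dnabla}) twice—once for the outer map $g$ and once for the inner map $f$—and then collect terms using the $C^\infty$-linearity of the pushforward. The key observation is that, since $f$ is a diffeomorphism, $Y:=f_*\pp{}q$ is a genuine vector field on $\cM_2$ and $w:=f_*e$ is likewise a genuine vector field on $\cM_2$, so the $g$-version of Definition~\ref{d:dnabla} can be applied with $Y$ and $w$ in the roles of $\pp{}q$ and $e$.

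First I would rewrite Definition~\ref{d:dnabla} in the rearranged form that will be applied repeatedly: for any map and any admissible vector fields,
\[
  \nabla_{f_*\pp{}q}(f_*e)=(\nabla_{\pp{}q}f_*)e+f_*\nabla_{\pp{}q}e.
\]
Applying this identity to $g$ (with $Y=f_*\pp{}q$ and $w=f_*e$) gives
\[
  \nabla_{g_*f_*\pp{}q}(g_*f_*e)=(\nabla_{f_*\pp{}q}g_*)f_*e+g_*\,\nabla_{f_*\pp{}q}(f_*e).
\]
Then I would substitute the same identity for $f$ into the last term, obtaining $\nabla_{f_*\pp{}q}(f_*e)=(\nabla_{\pp{}q}f_*)e+f_*\nabla_{\pp{}q}e$, and distribute $g_*$ across the sum by its linearity. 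This yields
\[
  \nabla_{g_*f_*\pp{}q}(g_*f_*e)=(\nabla_{f_*\pp{}q}g_*)f_*e+g_*(\nabla_{\pp{}q}f_*)e+g_*f_*\nabla_{\pp{}q}e,
\]
which is exactly the claimed formula.

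The computation itself is short; the only real care needed is bookkeeping of base points and of which vector does the differentiating at each stage. Concretely, $\pp{}q\in T\cM_1$, $f_*\pp{}q\in T\cM_2$, and $g_*f_*\pp{}q\in T\cM_3$, so each covariant derivative lives on the appropriate manifold at the corresponding image point, and I would verify that the intermediate object $\nabla_{f_*\pp{}q}(f_*e)$ is a well-defined vector field on $\cM_2$ before pushing it forward by $g_*$. The main (and essentially only) obstacle is confirming that $g_*$ distributes over the sum $(\nabla_{\pp{}q}f_*)e+f_*\nabla_{\pp{}q}e$ as an honest linear operation on vectors; this is immediate from the tensorial character of the pushforward established in Definition~\ref{d:f*}, so no analytic estimate is required. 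If a coordinate check were desired as a sanity test, one could instead expand both sides using the coordinate form of $\nabla f_*$ in Definition~\ref{d:dnabla} and match the Jacobian, derivative-of-Jacobian, and connection-coefficient contributions term by term, but the invariant argument above is cleaner and is the route I would present.
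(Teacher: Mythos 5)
Your proposal is correct and follows exactly the paper's own argument: apply Definition~\ref{d:dnabla} once to $g$ with $f_*\pp{}q$ and $f_*e$ in the roles of the differentiating vector and the field, then once more to $f$, and distribute $g_*$ over the resulting sum. The extra bookkeeping remarks about base points and the well-definedness of the intermediate field are sensible but add nothing beyond what the paper's two-line computation already contains.
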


\begin{remark*}
Besides the proof below,
readers may find it consolidating to prove by writing everything in coordinates,
which also helps us to check that the coordinate form is correct.
When doing that, use the following relations to cancel or combine terms:
  \[ \begin{split}
    \pp{}{z_2^l} \nabla_{\pp{}q} dz_2^j 
    + dz_2^j \nabla_{\pp{}q} \pp{}{z_2^l}
    = \pp{}q \delta^j_l 
    = 0, \quad
    e \nabla_{\pp{}q} dz^i_1
    + dz^i_1 \nabla_{\pp{}q} e
    =  
    \pp{}q (e dz^i_1).
  \end{split} \]
\end{remark*}

\begin{proof}
  By definition,
  \[ \begin{split}
    \nabla_{g_* f_* \pp{}q } (g_* f_* e)
    = \nabla_{g_* (f_* \pp{}q) } (g_* (f_* e))
    = (\nabla_{f_*\pp{}q } g_*) (f_* e) + g_* \nabla_{f_* \pp{}q } (f_* e)\\
    = (\nabla_{f_*\pp{}q } g_*) (f_* e)
    + g_* (\nabla_{\pp{}q} f_*) e 
    + g_* f_* \nabla_{\pp{}q} e.    
  \end{split} \]
\end{proof}

Finally, we extend the Leibniz rule to the case where $e=e_1\wedge\cdots\wedge e_u$ is a $u$-vector.
Now $f_*e = f_* e_1 \wedge \cdots \wedge f_* e_u$, and 
\[ \begin{split}
  &\nabla e = \sum_{i=1}^u e_1 \wedge \cdots \nabla e_i \wedge \cdots \wedge e_u ,\quad
  \nabla (f_*e) = \sum_{i=1}^u f_* e_1 \wedge \cdots \nabla(f_*e_i) \wedge \cdots \wedge f_* e_u.
\end{split} \]

\begin{definition} [$\nabla f_*$ on $u$-vectors] \label{d:dnabla}
The Riemannian connection of pushforward operators on $u$-vectors,
\[ \begin{split}
  (\nabla_{ \pp{}q }  f_*)e := \nabla_{f_* \pp{}q } (f_*e) - f_*\nabla_{ \pp{}q } e
  = \sum_{i=1}^u f_* e_1 \wedge \cdots (\nabla_{ \pp{}q } f_*) e_i 
  \wedge \cdots \wedge f_* e_u .\\
\end{split} \]
In the last expression, $\nabla_{\pp{}q} f_*$ acts on single vectors, as defined earlier.
Notice that by our definition, 
$(\nabla_{ \pp{}q }  f_*)e $ does not depend on the choice of $e_i$'s,
as long as their wedge product is $e$.
\end{definition}

\begin{lemma} [Leibniz rule for differentiating $u$-vectors] \label{l:Leibniz}
  Let $e$ be a $C^\infty$ $u$-vector, then
  \[ \begin{split}
    \nabla_{g_* f_* \pp{}q } (g_* f_* e) 
    &= (\nabla_{f_* \pp{}q} g_*) f_* e 
    + g_* (\nabla_{\pp{}q} f_*) e 
    + g_* f_* \nabla_{\pp{}q} e ,\\
    \nabla_{f_*^k \pp{}q } (f_*^k e)
    &= \sum_{n=0}^{k-1} f_*^{k-n-1} (\nabla_{f_*^n \pp{}q} f_*) f_*^n e 
    + f_*^k \nabla_{\pp{}q} e .\\
  \end{split} \]
\end{lemma}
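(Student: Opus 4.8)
The plan is to deduce both identities from the single-vector Leibniz rule already established in lemma~\ref{l:chala}, promoting it to $u$-vectors by the product rule and then iterating by induction. For the first identity I would write $e = e_1\wedge\cdots\wedge e_u$ and differentiate the decomposable $u$-vector $g_* f_* e = (g_* f_* e_1)\wedge\cdots\wedge(g_* f_* e_u)$ using the product rule for the induced connection on $\wedge^u T\cM$ (the same rule already used to define $\nabla(f_* e)$ just above the statement), giving
\[
\nabla_{g_* f_* \pp{}q}(g_* f_* e) = \sum_{i=1}^u (g_* f_* e_1)\wedge\cdots\wedge \nabla_{g_* f_* \pp{}q}(g_* f_* e_i)\wedge\cdots\wedge(g_* f_* e_u).
\]

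Next I would apply lemma~\ref{l:chala} to each slot, replacing $\nabla_{g_* f_* \pp{}q}(g_* f_* e_i)$ by the three-term single-vector expansion $(\nabla_{f_*\pp{}q} g_*) f_* e_i + g_*(\nabla_{\pp{}q} f_*) e_i + g_* f_* \nabla_{\pp{}q} e_i$, and then split the resulting double sum into three families according to which term sits in the $i$-th slot. In the first family every slot other than $i$ carries $g_*(f_* e_j)$, so the family is exactly $(\nabla_{f_*\pp{}q} g_*)(f_* e)$ by the $u$-vector definition of $\nabla g_*$ applied to the decomposable $u$-vector $f_* e$. In the second and third families the common outer operators $g_*$, respectively $g_* f_*$, are linear and distribute over $\wedge$, so they factor out of every slot, identifying these families with $g_*(\nabla_{\pp{}q} f_*) e$ and $g_* f_* \nabla_{\pp{}q} e$. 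Summing the three families yields the first identity.

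For the second identity I would induct on $k$. The case $k=1$ is precisely the $u$-vector definition of $(\nabla_{\pp{}q} f_*) e$ together with $\nabla_{f_*\pp{}q}(f_* e) = (\nabla_{\pp{}q} f_*) e + f_*\nabla_{\pp{}q} e$. For the step I would apply the first identity with $g = f$ and the inner map taken as $f^k$, i.e. to the composition $f_*\circ f_*^k$, obtaining
\[
\nabla_{f_*^{k+1}\pp{}q}(f_*^{k+1} e) = (\nabla_{f_*^k\pp{}q} f_*) f_*^k e + f_*(\nabla_{\pp{}q} f_*^k) e + f_*^{k+1}\nabla_{\pp{}q} e.
\]
The middle term contains $(\nabla_{\pp{}q} f_*^k) e := \nabla_{f_*^k\pp{}q}(f_*^k e) - f_*^k\nabla_{\pp{}q} e$, which by the induction hypothesis equals $\sum_{n=0}^{k-1} f_*^{k-n-1}(\nabla_{f_*^n\pp{}q} f_*) f_*^n e$; pushing one $f_*$ through turns it into $\sum_{n=0}^{k-1} f_*^{k-n}(\nabla_{f_*^n\pp{}q} f_*) f_*^n e$, which are the $n=0,\dots,k-1$ summands. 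The first term $(\nabla_{f_*^k\pp{}q} f_*) f_*^k e$ supplies the missing $n=k$ summand, and $f_*^{k+1}\nabla_{\pp{}q} e$ is the tail, completing the induction.

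I expect the main obstacle to be organizational rather than conceptual: keeping the three families cleanly separated when splitting the double sum in the first part, and getting the telescoping reindexing exactly right in the inductive step, in particular recognizing that the boundary term $(\nabla_{f_*^k\pp{}q} f_*) f_*^k e$ is the $n=k$ summand and that one application of $f_*$ shifts the exponent uniformly in every summand. Both become mechanical once the product rule over $\wedge$ and lemma~\ref{l:chala} are in hand.
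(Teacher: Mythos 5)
Your proposal is correct and follows essentially the same route as the paper, whose entire proof is to inductively apply the one-step rule $\nabla (f_*e) = (\nabla f_*)e + f_*\nabla e$; your first-identity argument merely fleshes this out by descending to single vectors and lemma~\ref{l:chala} before reassembling, where one could equivalently apply the one-step $u$-vector rule twice directly. The inductive step for the second identity, including the reindexing and the identification of $(\nabla_{f_*^k\pp{}q} f_*) f_*^k e$ as the $n=k$ summand, matches the intended argument.
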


\begin{remark*}
  (1) There is no need to add a subscript to $\nabla f_*$ to indicate steps,
  since the two vectors it applies to, $f_*^n e$ and $f_*^n \pp{}q$,
  already well-locate this tensor.
  (2) We use $C^\infty$ in this section only because `$C^\infty$-multilinear' is a conventional
  terminology in differential geometry textbook.
  In fact, we only require the differentiation to be meaningful at the particular point
 and in the particular direction. 
  For example, the second equation of lemma
  applies to the rough $u$-vector field $e$ 
  defined in equation~\eqref{e:e}, which is differentiable only in unstable directions.
\end{remark*} 

\begin{proof}
  Inductively apply $\nabla (f_*e) = (\nabla f_*)e + f_*\nabla e$.
\end{proof}

\section{Derivative-like \texorpdfstring{$u$}{u}-vectors}
\label{a:derivative}

Consider a $u$-vector field $e:=e_1\wedge\cdots\wedge e_u$
smoothly defined along a direction $\pp{}q$; the derivative is 
\begin{equation*}
\nabla_{\pp{}q} e  = \sum_i e_1\wedge\cdots \nabla_{\pp{}q} e_i \cdots\wedge e_u.
\end{equation*}
This expression takes the specific form that each term has a special item $\nabla_{\partial/\partial q} e_i$, which usually is not within the span of $\{e_i\}_{i=1}^u$.
However, it is not trivial that a summation of these derivatives still has this form,
especially when the directions $\pp{}q$ and the basis of $e$ in each summand are different.
Motivated by this, we define a subspace of $u$-vectors which looks like these derivatives,
and show some related properties.

\begin{definition} \label{d:derivative like u vectors}
  At point $x$,
  the collection of derivative-like $u$-vectors of $e = \wedge^u_{i=1}e_i$, 
  written on the basis $\{e_i\}_{i=1}^u$, is defined as
  \[ \begin{split}
    \cD^u := \{p\in\wedge^u T_x \cM:
    p= \sum_i e_1\wedge\cdots\wedge p_i \wedge\cdots\wedge e_u,\;
    p_i\in T_x\cM\}.
  \end{split} \]
\end{definition}

In our paper, the basis $\{e_i\}_{i=1}^u$ is typically the basis of the unstable subspace.
Letting $\{e_i\}_{i=1}^M$ be a full basis of $T_x\cM$ whose first $u$ vectors spans $V^u$,
then by decomposing $p$ onto the basis of $\wedge^u T_x \cM$,
we can see that $\cD^u$ is the direct sum
\begin{equation} \begin{split} \label{e:passionate}
  \cD^u = \spanof \{e\} \oplus \sum_{i=1}^u \sum_{j>u}
  \spanof\{e_1\wedge\cdots\wedge e_{i-1} \wedge e_{i+1}
  \wedge\cdots\wedge e_u\wedge e_j\}.
\end{split} \end{equation}
Hence, under a given basis, 
$p_i$ is unique modulo $\spanof \{e_i\}_{i=1}^u$.

So long as $\spanof \{e_1,\cdots,e_u\}$ is the same, 
$\cD^u$ as a subspace is independent of the selection of basis.
To see this, let $\{e'_i\}_{i=1}^M$ be another full basis such that
$\spanof \{e_1,\cdots,e_u\} = \spanof \{e'_1,\cdots,e'_u\}=V^u$,
and let $\cD_{e'}$ be the corresponding subspace.
Write each term in \cref{e:passionate} by the other basis, we can see that
\[ \begin{split}
  &\spanof\{e_1\wedge\cdots\wedge e_{i-1} \wedge e_{i+1}
  \wedge\cdots\wedge e_u\wedge e_j\}\\
  \subset
  &\spanof \{e'\} \oplus \sum_{i=1}^u \sum_{j>u}
  \spanof\{ e'_1\wedge\cdots\wedge  e'_{i-1} \wedge  e'_{i+1}
  \wedge\cdots\wedge e'_u\wedge  e'_j\}.
\end{split} \]
Hence, $\cD^u \subset \cD_{ e'}$.
By symmetry, $\cD^u = \cD_{ e'}$.

The next step is to consider how to express derivative-like $u$-vectors 
under a new basis of the same span.
Should $p$ indeed be the derivative of a $u$-vector field,
there is a formula for changing to a new basis
which is a constant linear combination of the old basis.
We will show that the same formula is true when $p$ is only derivative-like but not really a derivative.

\begin{lemma} [change of basis formula] \label{l:change of basis}
  Let $\{ e'_j\}_{j=1}^u$ be another basis of $V^u$ such that $e'_j = a_j^i  e_i$;
  let $p_k, p'_k\in T_x\cM$ satisfy $p'_k := a_k^i p_i$ (notice that typically $p_k, p'_k \notin V^u$), then
  \[ \begin{split}
    \sum_{k=1}^u  e'_1\wedge\cdots\wedge  p'_k \wedge\cdots\wedge  e'_u 
    = \det (a_i^j) 
    \sum_{k=1}^u e_1\wedge\cdots\wedge p_k \wedge\cdots\wedge e_u .
  \end{split} \]
In other words, let $[p]_e:=[p_1, \ldots, p_u]$ be the matrix of $p=\sum_{k=1}^u e_1\wedge\cdots\wedge p_k \wedge\cdots\wedge e_u$ under the basis $\underline e:=[e_1, \ldots, e_u]$.
Then, under a new basis $\underline e ' = \underline e A$, $A\in \R^{u\times u}$, 
\[ \begin{split}
  [p]_{e'} 
  = \det(A)^{-1} [p_1', \ldots, p_u']
  = \det(A)^{-1} [p]_e A.
\end{split} \]
\end{lemma}

\begin{remark*}
(1) For the simplified case, assume that $e$ and $e'$ are differentiable, $A$ is constant, $\underline e' = \underline e A$, for some vector $Y$, let 
$p_k:=\nabla_Y e_k$, $p'_k:=\nabla_Y e'_k$, then $p = \nabla_Y e$, $e' = \det(A) e$, and
\begin{equation*}
    \underline p' = \nabla_Y \underline e'
    = (\nabla_Y \underline e) A
    = \underline p A,
    \quad
    [p]_e = \underline p,
\end{equation*}
which satisfies the assumption in the lemma.
Then this lemma is just the fact that 
\begin{equation*}
p = \nabla_Y e = \det(A)^{-1} \nabla_Y e',
\quad 
[p]_{e'} = \det(A)^{-1} \underline p' = \det(A)^{-1} \underline p A = \det(A)^{-1} [p]_e A.
\end{equation*}
(2)
We use this lemma to prove \cref{l:titan},
where $p$ is a summation of derivatives.
For this particular case, we may as well first prove the lemma for each summand,
then apply linearity to obtain the lemma for the summation.
However, here we give a more general and algebraic proof,
which does not rely on the fact that p is a derivative or a summation of derivatives.
\end{remark*}

\begin{proof}
  We can reorder $ e'_i $ so that for all $i$,
  $S_i:=\{ e'_1,\cdots, e'_i, e_{i+1},\cdots,e_u\}$ is an independent set of vectors.
  Changing basis from $S_0$ to $S_u$ can be achieved by sequentially changing from 
  $S_i$ to $S_{i+1}$, where only one vector is changed in each step.
  Hence, by induction, it suffices to show that the lemma is true for any one step,
  for example, the first step.
  Hence, it suffices to prove for the case where $ e'_1 = a^i e_i$, 
  and $ e'_j = e_j$ for all $j\ge 2$.
  Now $p'_1=a^ip_i$, $p'_j = p_j$ for all $j\ge 2$.
  the left hand side is
  \[ \begin{split}
    &LHS = \sum_{k=1}^u  e'_1\wedge\cdots\wedge  p'_k \wedge\cdots\wedge  e'_u 
    =  p'_1\wedge e'_2 \wedge\cdots\wedge  e'_u 
    + \sum_{k\ge 2}^u  e'_1\wedge\cdots\wedge  p'_k \wedge\cdots\wedge  e'_u 
    \\
    =& \sum_i a^i p_i \wedge e_2 \wedge \cdots \wedge e_u
    + \sum_{k\ge 2} \sum_i a^i e_i\wedge e_2\wedge\cdots\wedge p_k \wedge\cdots\wedge e_u 
    \\
    =& \sum_{i} a^i p_i \wedge e_2 \wedge \cdots \wedge e_u
    +  a^1 \sum_{k\ge 2} e_1 \wedge \cdots p_k \cdots \wedge e_u
    + \sum_{k\ge 2} \sum_{i\ge2} a^i e_i\wedge e_2\cdots\wedge p_k \wedge\cdots\wedge e_u
  \end{split} \]
  In the last summation, notice that an exterior product vanishes if $e_i$ appears twice.
  Hence,
  \[ \begin{split}
    &LHS = 
    a^1 \sum_{k\ge 2} e_1 \cdots p_k \cdots e_u
    + \left( \sum_{i} a^i p_i \wedge e_2 \cdots e_u
    + \sum_{k\ge 2} a^k e_k \wedge e_2\cdots p_k \cdots e_u \right)
  \end{split} \]
  Comparing the two summations in parentheses, 
  notice that interchanging the positions of $p_k$ and $e_k$ in an exterior product changes the sign,
 and hence all terms cancel except the term with $i=1$, and 
  \[ \begin{split}
    &LHS = 
    a^1 \sum_{k\ge 2} e_1 \wcw p_k \wedge \cdots \wedge e_u
    + \sum_{i=1} a^i p_i \wedge e_2 \wedge \cdots \wedge e_u\\
    =& a^1 \sum_k e_1 \wedge \cdots \wedge p_k \wcw e_u = a^1 p.
  \end{split} \]
  Since $a^1$ is the determinant of our current transformation matrix,
  we have proved the lemma for one step.
  The lemma is proved by induction.
\end{proof}

\section{Projection operators on derivative-like \texorpdfstring{$u$}{u}-vectors} 
\label{a:projection}

For derivative-like $u$-vectors, $\cD^u$, defined in appendix~\ref{a:derivative},
only one entry is not in $V^u$ in each wedge product;
hence, we can extend the definition of projection operators from one-vectors to $\cD^u$,
by applying the projection on the one exceptional entry in each wedge product.
Notice that, in general, we can not extend our definitions to $\wedge^u T_K\cM$ while keeping all the good properties; we must work in $\cD^u$.
In this section, $e\in\wedge^u V^u$ by default.

\begin{definition} \label{d:projection}
The projection operator $P$ on $p\in\cD^u$ is
\[ \begin{split}
  P p = \sum_i e_1\wedge\cdots\wedge P p_i \wedge\cdots\wedge e_u ,
\end{split} \]
where $P$ on the right side is the projection operator on one-vectors.
\end{definition}

The projection operators used in this paper are $P^u, P^s$, $P^\parallel$, and $ P^\perp$.
The first two operators are oblique projections along stable or unstable subspace 
to the unstable or stable subspace, respectively.
The last two operators are orthogonal projections 
onto the unstable subspace and its orthogonal complement.
These operators, applied on one-vectors, are illustrated in figure~\ref{f:projection}.
Notice that computing $P^u$ and $P^s$ both require both $V^u$ and $V^s$;
in contrast, computing $P^\parallel$ and $ P^\perp$ only require $V^u$, thus are faster.
We have the decomposition
\begin{equation*}
P^u+P^s=Id,
\quad
P^\perp+P^\parallel=Id.
\end{equation*}
For both single and $u$-vectors, denote
\[ \begin{split}
  p^u := P^up, \quad
  p^s := P^sp; \quad
  p^\parallel := P^\parallel p, \quad
  p^\perp := P^\perp p.
\end{split} \]

\begin{figure}[ht] 
  \centering
  \includegraphics[width=2.5in]{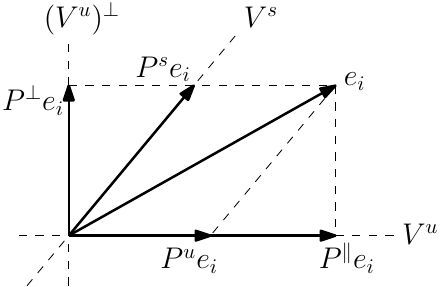}
  \caption{$P^u, P^s$, and $P^\parallel, P^\perp$ applied on $e_i$.}
  \label{f:projection}
\end{figure}

For fixed $V^u$, $P^\parallel$ and $P^\perp$ do not depend on the choice of basis.
To see this, note that 
\[ \begin{split}
  \cD^{u\perp} := \sum_{i=1}^u \sum_{j>u}
  \spanof\{e_1\wedge\cdots\wedge e_{i-1} \wedge e_{i+1}
  \wedge\cdots\wedge e_u\wedge e_j\}.
\end{split} \]
is the same as long as $e_{u+1},\cdots,e_M$ is a basis of $(V^u)^\perp$;
then note that $P^\parallel$ and $P^\perp$ are projections corresponding to the decomposition
$\cD^u = \spanof \{e\}\oplus \cD^{u\perp}$.
Similarly, if both $V^u$ and $V^s$ are fixed, 
$P^u$ and $P^s$ do not depend on the choice of basis.

\begin{lemma}[composing projections with addition]
For $p,  p' \in \cD^u$,
\[ \begin{split}
  P(p +  p') = Pp + P p'.
\end{split} \]
\end{lemma}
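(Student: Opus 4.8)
The plan is to reduce the claim to the multilinearity of the exterior product together with the linearity of the projection operator $P$ acting on one-vectors. Since $P$ restricted to $T_x\cM$ (whether $P^u$, $P^s$, $P^\parallel$, or $P^\perp$) is an ordinary linear projection, and since $\cD_e$ is spanned by $u$-vectors differing from $e$ in a single slot, the identity should follow almost immediately once $p$ and $p'$ are written over the \emph{same} basis $\{e_i\}_{i=1}^u$ of $V^u$.

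First I would fix the common basis $\{e_i\}_{i=1}^u$ associated to $e=e_1\wedge\cdots\wedge e_u$ and write
\[ \begin{split}
  p = \sum_{i=1}^u e_1\wedge\cdots\wedge p_i \wedge\cdots\wedge e_u, \quad
  p' = \sum_{i=1}^u e_1\wedge\cdots\wedge p_i' \wedge\cdots\wedge e_u,
\end{split} \]
with $p_i, p_i' \in T_x\cM$. Because only the $i$-th slot varies in the $i$-th summand, the multilinearity of $\wedge$ lets me combine the two sums slotwise,
\[ \begin{split}
  p + p' = \sum_{i=1}^u e_1\wedge\cdots\wedge (p_i + p_i') \wedge\cdots\wedge e_u,
\end{split} \]
exhibiting $p+p'$ as an element of $\cD_e$ whose component vectors are $p_i + p_i'$.

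Next I would apply definition~\ref{d:projection} to $p+p'$ and invoke the linearity of $P$ on single vectors, $P(p_i + p_i') = Pp_i + Pp_i'$, then split the resulting sum back into two:
\[ \begin{split}
  P(p+p')
  &= \sum_{i=1}^u e_1\wedge\cdots\wedge P(p_i + p_i') \wedge\cdots\wedge e_u \\
  &= \sum_{i=1}^u e_1\wedge\cdots\wedge Pp_i \wedge\cdots\wedge e_u
   + \sum_{i=1}^u e_1\wedge\cdots\wedge Pp_i' \wedge\cdots\wedge e_u \\
  &= Pp + Pp'.
\end{split} \]

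The one point needing care — the only place this is more than bookkeeping — is that the representation $\{p_i\}$ of a derivative-like $u$-vector is not unique (it is fixed only modulo $\spanof\{e_i\}_{i=1}^u$), so the argument tacitly relies on $P$ being well-defined on $\cD_e$. That well-definedness is exactly the independence from the choice of basis already recorded for $P^u$, $P^s$, $P^\parallel$, and $P^\perp$ above; once $P$ is well-defined, the slotwise computation closes the proof, so I do not expect any genuine obstacle here.
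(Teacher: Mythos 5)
Your proof is correct and is exactly the routine verification the paper has in mind: the paper states this lemma without proof, treating it as immediate from Definition~\ref{d:projection} and the linearity of $P$ on single vectors, which is precisely your slotwise computation. Your added remark about well-definedness of $P$ on $\cD_e$ (since the representatives $p_i$ are only unique modulo $\spanof\{e_i\}_{i=1}^u$) is a legitimate point the paper glosses over, and it checks out because the four projections in question either fix $V^u$ or annihilate it.
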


\begin{lemma}[composing projection operators]\label{l:compose projection}
\[ 
  P^\parallel P^u = P^u,
  P^u P^\parallel = P^\parallel ;\;
  P^\perp P^s = P^\perp,
  P^s P^\perp = P^s ;\;
  P^\perp P^u = P^s P^\parallel= 0.
\]
\end{lemma}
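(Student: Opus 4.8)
The plan is to reduce all six identities to the corresponding statements for the single-vector projection operators and then verify those by the elementary kernel/image structure of oblique and orthogonal projections. First I would observe that each operator in Definition~\ref{d:projection} acts slotwise: writing any $p\in\cD_e$ as $p=\sum_i e_1\wedge\cdots\wedge p_i\wedge\cdots\wedge e_u$, we have $Pp=\sum_i e_1\wedge\cdots\wedge Pp_i\wedge\cdots\wedge e_u$, which is again a derivative-like $u$-vector and hence lies in $\cD_e$. Applying a second operator $P'$ to this representation and invoking Definition~\ref{d:projection} once more yields $P'(Pp)=\sum_i e_1\wedge\cdots\wedge P'Pp_i\wedge\cdots\wedge e_u$; that is, the composition $P'P$ on $\cD_e$ is the slotwise extension of the single-vector composition $P'P$. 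This remains valid even when an intermediate slot $Pp_i$ lands in $V^u$, since the corresponding summand is then a multiple of $e$, still a legitimate derivative-like term, and the well-definedness of each $P$ on $\cD_e$ was already established in Appendix~\ref{a:projection}. Consequently it suffices to prove each identity for single vectors.

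For single vectors I would record the defining facts. The operators $P^u$ and $P^s$ are the oblique projections onto $V^u$ and $V^s$ with $P^u+P^s=\mathrm{Id}$, so $\mathrm{im}\,P^u=V^u=\ker P^s$ and $P^u|_{V^u}=\mathrm{Id}$; the operators $P^\parallel$ and $P^\perp$ are the orthogonal projections onto $V^u$ and $(V^u)^\perp$ with $P^\parallel+P^\perp=\mathrm{Id}$, so $\mathrm{im}\,P^\parallel=V^u=\ker P^\perp$ and $P^\parallel|_{V^u}=\mathrm{Id}$. The two vanishing identities are then immediate: for any vector, $P^u$ (resp. $P^\parallel$) lands in $V^u$, which is the kernel of $P^\perp$ (resp. $P^s$), giving $P^\perp P^u=0$ and $P^s P^\parallel=0$. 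The two absorbing identities follow because both $P^u$ and $P^\parallel$ map into $V^u$ and restrict to the identity there, so $P^\parallel P^u=P^u$ and $P^u P^\parallel=P^\parallel$.

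Finally, the remaining two identities I would deduce from the vanishing ones together with the two resolutions of the identity: writing $P^s=\mathrm{Id}-P^u$ gives $P^\perp P^s=P^\perp-P^\perp P^u=P^\perp$, and writing $P^\perp=\mathrm{Id}-P^\parallel$ gives $P^s P^\perp=P^s-P^s P^\parallel=P^s$. Assembling the six single-vector relations and lifting them slotwise back to $\cD_e$ via the first step completes the argument. I do not expect a genuine obstacle here; the only point requiring care is the slotwise reduction itself—checking that $Pp\in\cD_e$ so that $P'$ may legitimately be applied, and that composition stays slotwise even when an intermediate entry falls into $V^u$—after which each identity is a one-line consequence of the kernel/image structure of the four projections.
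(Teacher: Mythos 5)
Your proposal is correct; the paper states this lemma without proof, and your argument—reduce slotwise to single vectors, then use the kernel/image structure ($\mathrm{im}\,P^u=\mathrm{im}\,P^\parallel=V^u=\ker P^s=\ker P^\perp$, both restricting to the identity on $V^u$) plus the resolutions $P^u+P^s=\mathrm{Id}$ and $P^\parallel+P^\perp=\mathrm{Id}$—is exactly the intended one. The only point you rightly flag, that the slotwise definition of $P$ on $\cD_e$ is well defined despite the non-uniqueness of the $p_i$ modulo $V^u$, does go through, since all four projections either fix or annihilate $V^u$.
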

\begin{remark*}
  Notice that, typically $P^u P^\perp \neq 0$, $P^\parallel P^s \neq 0$.
\end{remark*}

\begin{lemma}[composing projections with pushforwards] \label{l:P with f}
\[ \begin{split}
  f_*P^u = P^uf_* = P^uf_*P^u ,\quad 
  f_*P^s = P^sf_* = P^sf_*P^s ;\\
  f_* P^\parallel = P^\parallel f_* P^\parallel ,\quad
  P^\perp f_* = P^\perp f_* P^\perp .
\end{split} \]
Here the projection $P$'s are evaluated at suitable locations.
\end{lemma}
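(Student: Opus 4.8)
The plan is to reduce every identity to the single-vector case and then lift it to $\cD_e$ through the summand-wise definition of the projections in definition~\ref{d:projection}. The only structural input I need is the $f_*$-invariance of the hyperbolic splitting from section~\ref{s:hyperNotation}, namely $f_*V^u(x)=V^u(fx)$ and $f_*V^s(x)=V^s(fx)$. Throughout, the projections on the left of each identity act at $fx$ and those on the right at $x$; this is exactly the content of the phrase ``evaluated at suitable locations,'' and I would state it explicitly once at the start so the base points are never ambiguous.

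First I would treat a single vector $w\in T_x\cM$. Writing $w=w^u+w^s$ with $w^u\in V^u(x)$ and $w^s\in V^s(x)$, invariance shows that $f_*w=f_*w^u+f_*w^s$ is precisely the unstable/stable decomposition of $f_*w$ at $fx$. Hence $P^uf_*w=f_*w^u=f_*P^uw$, which gives $f_*P^u=P^uf_*$; the third expression then follows by idempotency, $P^uf_*P^u=(f_*P^u)P^u=f_*P^u$, and symmetrically for $P^s$. For the orthogonal projections I use only that $P^\parallel$ fixes $V^u$ and $P^\perp$ annihilates it at the relevant base point. Since $f_*P^\parallel w\in V^u(fx)$, applying $P^\parallel$ at $fx$ leaves it unchanged, giving $f_*P^\parallel=P^\parallel f_*P^\parallel$; and since $\ker P^\perp=V^u(fx)\supseteq f_*(V^u(x))$, we get $P^\perp f_*P^\parallel=0$, whence $P^\perp f_*=P^\perp f_*(P^\parallel+P^\perp)=P^\perp f_*P^\perp$. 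I would remark that the two orthogonal identities are genuinely one-sided, because $f_*$ does not preserve orthogonal complements.

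To lift to $p=\sum_i e_1\wedge\cdots\wedge p_i\wedge\cdots\wedge e_u\in\cD_e$ with $e\in\wedge^uV^u(x)$, I would first record that $f_*p=\sum_i f_*e_1\wedge\cdots\wedge f_*p_i\wedge\cdots\wedge f_*e_u$ lies in $\cD_{f_*e}$, since each $f_*e_j\in V^u(fx)$. Consequently the $\cD_{f_*e}$-projection at $fx$ acts on the exceptional entries $f_*p_i$ exactly as the single-vector projection does, while the factors $f_*e_j$ are untouched. Applying the single-vector identities entry by entry and pulling $f_*$ back out of the wedge then yields all four displayed relations on $\cD_e$ directly.

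I do not expect a deep obstacle here; the part that genuinely needs care is the well-definedness of these compositions on $\cD_e$. The entries $p_i$ are only determined modulo $V^u$ (appendix~\ref{a:derivative}), so I must verify that each projection maps $V^u$ into $V^u$, which is what makes the summand-wise definition unambiguous: $P^u$ and $P^\parallel$ fix $V^u$, while $P^s$ and $P^\perp$ send it to $0$. Combined with the basis-independence of $\cD_e$ and of $P^\parallel,P^\perp$ (for fixed $V^u$) and of $P^u,P^s$ (for fixed $V^u,V^s$) noted in appendix~\ref{a:projection}, the entry-by-entry reduction is legitimate, and the single-vector computations close the argument.
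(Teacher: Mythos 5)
Your proposal is correct and follows essentially the same route as the paper: the first three identities come from the $f_*$-invariance of $V^u$ and $V^s$ applied entrywise to the exceptional slot of a derivative-like $u$-vector, exactly as the paper intends. The only (minor) divergence is the last identity, where you use $\mathrm{Id}=P^\parallel+P^\perp$ together with $f_*V^u(x)=V^u(fx)$ in place of the paper's chain $P^\perp f_*=P^\perp P^s f_*=\cdots=P^\perp f_*P^\perp$ via lemma~\ref{l:compose projection}; both are valid, and your explicit check that each projection preserves or kills $V^u$ (so the summand-wise definition on $\cD_e$ is unambiguous) is a point the paper leaves implicit.
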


\begin{proof}
The first three equalities are due to the invariance of the stable and unstable subspace.
The last equality is because
\[ \begin{split}
P^\perp f_* = P^\perp P^s f_* 
= P^\perp P^s f_* P^s
= P^\perp P^s f_* P^s P^\perp 
= P^\perp P^s f_* P^\perp
= P^\perp f_* P^\perp . 
\end{split} \]
\end{proof}

\begin{lemma}[expressing $P^\parallel$ by inner products] \label{l:baozi}
For any $p\in \cD^u$,
\[ \begin{split}
  P^\parallel p
  = \ip{p, e} \frac{e}{\|e\|^2};\quad
  P^\perp p
  = p - \ip{p, e} \frac{e}{\|e\|^2}.
\end{split} \]
\end{lemma}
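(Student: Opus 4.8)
The plan is to identify $P^\parallel$ restricted to $\cD_e$ with the genuine orthogonal projection of the inner-product space $\wedge^u T_x\cM$ onto the line $\spanof\{e\}$. Since the orthogonal projection onto the span of a single nonzero vector $e$ is exactly the map $p \mapsto \ip{p,e}\, e/\|e\|^2$, it suffices, by uniqueness of orthogonal decomposition, to verify two facts: that $P^\parallel p \in \spanof\{e\}$, and that the remainder $p - P^\parallel p$ is orthogonal to $e$. The second formula for $P^\perp$ then follows immediately from $P^\perp = \mathrm{Id} - P^\parallel$ on $\cD_e$.

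First I would check the range. By Definition~\ref{d:projection}, $P^\parallel p = \sum_i e_1 \wedge \cdots \wedge P^\parallel p_i \wedge \cdots \wedge e_u$ with each $P^\parallel p_i \in V^u$. Since every factor of each summand then lies in the $u$-dimensional space $V^u$, and $\wedge^u V^u = \spanof\{e\}$ is one-dimensional, each summand is a scalar multiple of $e$; hence $P^\parallel p \in \spanof\{e\}$. Dually, $p - P^\parallel p = \sum_i e_1 \wedge \cdots \wedge P^\perp p_i \wedge \cdots \wedge e_u$ with $P^\perp p_i \in (V^u)^\perp$, so expanding each $P^\perp p_i$ on a basis $\{e_j\}_{j>u}$ of $(V^u)^\perp$ writes $p - P^\parallel p$ as a linear combination of the generators of $\cD_e^\perp$ appearing in equation~\eqref{e:passionate}.

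The key step is the orthogonality $\cD_e^\perp \perp e$. Using the freedom (noted in the text) to choose the basis without changing $P^\parallel$ or $P^\perp$, I would take $\{e_i\}_{i\le u}$ spanning $V^u$ and $\{e_j\}_{j>u}$ spanning $(V^u)^\perp$, so that $\ip{e_j,e_k}=0$ whenever $j>u\ge k$. Invoking the Gram-determinant formula for the induced metric on $u$-vectors, $\ip{a_1\wedge\cdots\wedge a_u,\, b_1\wedge\cdots\wedge b_u} = \det[\ip{a_p,b_q}]$, the inner product of a typical generator $e_1\wedge\cdots\wedge e_{i-1}\wedge e_{i+1}\wedge\cdots\wedge e_u\wedge e_j$ of $\cD_e^\perp$ with $e=e_1\wedge\cdots\wedge e_u$ has an entire row of zeros, namely the one coming from the factor $e_j$, since $\ip{e_j,e_k}=0$ for all $k\le u$. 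The determinant therefore vanishes, so every generator is orthogonal to $e$, and by linearity $p - P^\parallel p \perp e$.

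Combining these, $P^\parallel p$ is the orthogonal projection of $p$ onto $\spanof\{e\}$, which yields $P^\parallel p = \ip{p,e}\,e/\|e\|^2$ and hence the stated $P^\perp$ formula. The only genuine content is the Gram-determinant computation establishing $\cD_e^\perp \perp e$; once that is in hand the statement reduces to the elementary formula for projection onto a line. I expect the main (and fairly mild) obstacle to be bookkeeping rather than any analytic difficulty: confirming that the entry-wise definition of $P^\parallel$ really lands in the one-dimensional $\spanof\{e\}$, and that the remainder decomposes cleanly into the generators of $\cD_e^\perp$.
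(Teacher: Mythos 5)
Your proof is correct and follows essentially the same route as the paper's: both arguments reduce to the facts that $P^\parallel p$ lies in the one-dimensional space $\wedge^u V^u=\spanof\{e\}$ and that $\ip{P^\perp p, e}=0$, after which the formula is just projection onto a line. The only difference is that you spell out the orthogonality $\ip{P^\perp p,e}=0$ via the Gram-determinant computation, a detail the paper's proof takes as evident.
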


\begin{proof}
For the first equation,
since both sides are in $\wedge^u V^u$,
which is a one-dimensional subspace,
it suffices to prove the equation after taking inner product with $e$, that is,
\[ \begin{split}
  \ip{P^\parallel p, e}
  = \ip{p, e} \frac{\ip{e,e}}{\|e\|^2}
  = \ip{p, e}.
\end{split} \]
Further adding $\ip{P^\perp p, e}=0$ to the left proves this equality.
The second equality is because $P^\perp+P^\parallel=Id$,
hence $P^\perp p = p - P^\parallel p$.
\end{proof}

\bibliography{library}
\bibliographystyle{abbrv}

\end{document}